\PassOptionsToPackage{prologue,dvipsnames}{xcolor}

\documentclass[12pt]{amsart}
\usepackage[utf8]{inputenc}
\usepackage[T1]{fontenc}
\usepackage{fullpage}
\usepackage{tabu}
\usepackage{amssymb} 
\usepackage{amsmath} 
\usepackage{amscd}
\usepackage{tkz-graph}
\usepackage{amsbsy}
\usepackage{enumerate}
\usepackage[matrix,arrow]{xy}
\usepackage{hyperref}
\usepackage{mathrsfs}
\usepackage{color}
\usepackage{mathtools,caption}
\usepackage{todonotes}
\usepackage{tikz-cd}
\usepackage{tikz}
\usepackage{pbox}
\usepackage{tkz-graph}
\usetikzlibrary{decorations,fit,positioning,calc}
\usepackage{ocgx2}
\usepackage{texmaX,semmaX,semtkzX}
\usepackage{float}
\usepackage{makecell}
\usepackage{ulem}
\usepackage[overload]{textcase}

\def \ZZ{\mathbb{Z}}
\def \Z{\mathbb{Z}}
\def \QQ{\mathbb{Q}}
\def \Q{\mathbb{Q}}
\newcommand{\PP}{\mathbb{P}}

\newcommand{\PSL}{\operatorname{PSL}}
\renewcommand{\SL}{\operatorname{SL}}
\def \cO{\mathcal{O}}
\renewcommand{\Gal}{\operatorname{Gal}}
\renewcommand{\red}{\operatorname{red}}
\newcommand{\imag}{\operatorname{Im}}

\def\id#1{{\mathfrak{#1}}}  
\def \GL{\text{GL}}

\newcommand{\rhoJ}{\rho_{J, \lambda}}
\newcommand{\brhoJ}{\overline{\rho}_{J, \lambda}}
\newcommand{\rhoJf}{\rho_{J,\mathfrak{r}_5}}
\newcommand{\brhoJf}{\overline{\rho}_{J,\mathfrak{r}_5}}

\newcommand{\rhoJp}{\rho_{J, \mathfrak{p}}}
\newcommand{\brhoJp}{\overline{\rho}_{J, \mathfrak{p}}}
\newcommand{\brho}{\overline{\rho}}
\newcommand{\rhoJchip}{\rho_{J\otimes\chi_0,\id{p}}}
\newcommand{\brhoJchip}{\overline{\rho}_{J\otimes\chi_0,\id{p}}}
\newcommand{\rhogp}{\rho_{g,\id{P}}}
\newcommand{\brhogp}{\overline{\rho}_{g,\id{P}}}

\definecolor{darkgreen}{rgb}{0,0.5,0}

\numberwithin{equation}{section}

\theoremstyle{definition}

\theoremstyle{remark}

\usepackage{comment}

\begin{document}

\title{The Generalized Fermat equation \text{\texorpdfstring{$Ax^2 + By^r=Cz^p$}{AX2+BYr=Czp} and applications}}


\keywords{Fermat equations, modular method, Frey hyperelliptic curves, Darmon's program}
\subjclass[2020]{Primary 11D41, Secondary 11G10}

\author{Pedro-Jos\'e Cazorla Garc\'ia}
	
\address{Departamento de Matem\'atica Aplicada, ICAI, Universidad Pontificia Comillas, Madrid, 28015, Spain}  \email{pjcazorla@comillas.edu}

\author{Angelos Koutsianas}

\address{Department of Mathematics, Aristotle University of Thessaloniki\\
54124, Thessaloniki, Greece.} 
\email{akoutsianas@math.auth.gr}

\author{Lucas Villagra Torcomian}
	
\address{Department of Mathematics, Simon Fraser University, Burnaby, BC V5A 1S6, Canada.}  \email{lvillagr@sfu.ca}
	

\begin{abstract}
In this paper, we develop the modular method for the generalized Fermat equation appearing in the title, within the framework of Darmon’s program and using Frey hyperelliptic curves. As an application, we study a conjecture of Laradji, Mignotte, and Tzanakis concerning the equation $5x^2+q^{2n}=y^5$.
\end{abstract}

\maketitle


\section{Introduction}
\label{sec:introduction}

\subsection{The generalized Fermat equation} 

The groundbreaking proof of Fermat's Last Theorem \cite{Wiles} has sparked extensive research into the utilization of modular abelian varieties for studying classical Diophantine problems. The majority of this research is devoted to the resolution of \textit{generalized Fermat equations}, which are equations of the shape
\begin{equation}\label{eq:GFE}
    Ax^q+By^r=Cz^p,\qquad A, B, C\in\ZZ_{\neq 0}, \ q, r, p \in\ZZ_{> 0}
\end{equation}
to be solved for integers $a, b$ and $c$ and where the triple $(q,r,p)$ is called the \textit{signature} of the equation. If $a$, $b$ and $c$ satisfy that $\gcd(Aa, Bb, Cc) = 1$, we call the $(a,b,c)$ a \textit{primitive solution}. Under the assumption that $p$, $q$ and $r$ are fixed positive integers satisfying that
\begin{equation}
    \frac{1}{q}+\frac{1}{r}+\frac{1}{p}<1,
\end{equation}
Darmon and Granville \cite{Darmon-Granville} proved that \eqref{eq:GFE} has only a finite number of primitive solutions. In loc.\ cit.\ it was also conjectured that there are in fact at most finitely many primitive solutions, even when we allow the signature $(q,r,p)$ to vary\footnote{Here we are counting solutions as $1^q+2^3=3^2$ only once.}. For the case $A=B=C=1$, which has been broadly studied in the literature, this conjecture has been proved for some infinite families of signatures, such as $(p,p,p)$ \cite{Wiles}, $(p,p,2)$ and $(p,p,3)$ \cite{Darmon-Merel}, $(2,4,p)$ \cite{Ell,BennettEllenbergNg}, $(2,6,p)$ \cite{BennettChen}, etc. In all these cases, the methodology followed the strategy pioneered by Wiles, which is now referred as \textit{the modular method}.

\subsection{The modular method}
As previously mentioned, the modular method has been used to approach several Diophantine problems. For the convenience of the reader, we will give a high-level explanation of it, which is based on a contradiction argument. 

Without loss of generality, we shall assume that $p$ is a prime variable in \eqref{eq:GFE}. Then, assuming the existence of a putative \textit{non-trivial} solution $(a,b,c)$ to \eqref{eq:GFE} (i.e. $abc\neq0$), the main steps of the modular method are the following:

\begin{enumerate}
    \item  Attach to $(a,b,c)$ an abelian variety $A:=A(a,b,c)$  defined over some totally real field $K$. For all prime ideals $\id{p}$ of $K$ above $p$, $A$ should satisfy that the ramification set of the $\id{p}$-th member of its compatible system of Galois representations, $\bar{\rho}_{A,\id{p}}$, is independent of the solution.
    
    \item Prove that $A$ is \textit{modular}, i.e., that its $L$--series corresponds to the $L$--series of an automorphic form.
    
    \item Show that $\bar{\rho}_{A,\id{p}}$ is absolutely irreducible.

    \item Compute the conductor $\id{n}$  of $\bar{\rho}_{A,\id{p}}$.

    \item Compute the space of newforms of weight 2 and level $\id{n}$. For each newform, show that their associated residual Galois representation modulo $\id{p}$ is not isomorphic to $\bar{\rho}_{A,\id{p}}$ and therefore reach a contradiction.
\end{enumerate}

In the classical setting, the abelian variety of step (1) is an elliptic curve, traditionally known as a \textit{Frey elliptic curve}. At present, however, such a construction is available only for a restricted collection of signatures $(q,r,p)$ (see, for example \cite[Section~4]{Darmon-Granville} for a detailed discussion).

To extend the scope of the method and address new signatures, Darmon introduced the idea of replacing elliptic curves with higher-dimensional abelian varieties of $\GL_2$-type over a suitable totally real number field \cite{DarmonDuke}. This seminal contribution sparked the study of what is now known as \textit{Darmon's program}.

In the scope of this program, Jacobians of \textit{Frey hyperelliptic curves} have been increasingly used in step (1); see the survey~\cite{ChenKoutsianasSurvey} for a very complete description. So far, there are only a few families of signatures having explicit hypergeometric realization, namely: $(p,p,r)$, $(r,r,p)$, $(3,5,p)$  and $(2,r,p)$ (see \cite{DarmonDuke,BillereyChenDieulefaitFreitas25a,PacettiVillagra5p3,ChenKoutsianasSurvey}, respectively). 

\subsection{\texorpdfstring{Our contribution to signature $(2,r,p)$}{Our contribution to signature (2,r,p)}}

In this paper, we will focus on the study of the case of \eqref{eq:GFE} of signature $(2,r,p)$, namely
\begin{equation}\label{eqn:main}
    Ax^2 + By^r = Cz^p, \qquad \gcd(Ax,By,Cz)=1,
\end{equation}
with $r$ a fixed prime and $p$ a variable prime. In addition, we assume, without loss of generality, that $A$ is square-free and that $B$ is free of $r$-powers. Our motivations for considering this problem are two-fold. 

Firstly, $(2,r,p)$ is the remaining case, from the above families of signatures, where Darmon's program has not been carried out in full generality. {The abelian variety considered in step (1) is the Jacobian of a slight adaptation of the hyperelliptic curve introduced in \cite[Section 4.4]{ChenKoutsianas} (see also \cite[Section 6.3]{computationconductor}). In this article, we explain in detail why this Jacobian is of $\GL_2$-type over a totally real field, and hence gives rise to a two-dimensional Galois representation as required in step (1).

Up to now, steps (2) and (3) have not been addressed in the literature, and they will therefore be treated in the present paper.} Concerning step (4), we note that in \cite{computationconductor} the first and third authors compute the conductor exponent of the representation $\bar{\rho}_{A,\id{p}}$ at odd primes under the assumption that $A=1$.

In this paper we extend the results by computing the conductor exponent for odd primes for an arbitrary square-free value of $A$ (see Section \ref{Sec:conductor}), using the cluster picture methodology (described, for example, in \ \cite{BBB}).

In addition, under certain hypotheses, we show that the representation is irreducible if the exponent $p$ is large enough (see Corollary \ref{coro:large-image}), and therefore complete the steps of Darmon's program for \eqref{eqn:main} for arbitrary values of $A$, $B$ and $C$.

\subsection{Diophantine applications}

In a series of papers \cite{Rabinowitz78, Cao00, Muriefah00, Cohn03, Muriefah08, WangWang11, LaradjiMignotteTzanakis11} different authors have studied the Diophantine equation,
\begin{equation}\label{eq:application_general}
    rx^2 + q^{n} = y^r,\quad \gcd(x,y)=1,
\end{equation}
where $r,q$ are distinct primes and $n > 1$. 

The case $n=2\alpha$ has attracted the most attention in recent years. In \cite[Theorem 3.1]{Muriefah08} Abu Muriefah completely resolves \eqref{eq:application_general} when $r\not\equiv 7\pmod{8}$ and $r>3$ (with $n$ even). However, the proof contains a crucial mistake at its end \cite{Goss10}. In \cite{LaradjiMignotteTzanakis11} the authors are able to give a correct proof for the case $r\equiv 3\pmod{8}$ and partial results for the case $r=5$.

\begin{theorem}[{\cite[Theorem 1.1]{LaradjiMignotteTzanakis11}}]\label{thm:LaradjiMignotteTzanakis}
Let $q$ be an odd prime. If either $q\not\equiv 1\pmod{600}$ or $q\leq 3\cdot10^{9}$, then there is no integer solution $(x,y,\alpha)$ to the equation
\begin{equation}\label{eq:main_LaradjiMignotteTzanakis}
    5x^2 + q^{2\alpha}=y^5,\quad x,y,\alpha>0.
\end{equation}
Otherwise, there exists at most one integer solution $(x,y,\alpha)$ and if it actually exists, then it must satisfy the following conditions:
\begin{enumerate}
    \item $\alpha < 820$ and $\gcd(\alpha, 2\cdot 3\cdot 5\cdot 7\cdot 11\cdot 13)=1$,
    \item There exists an integer $v$ such that $x=10v(80v^4-40v^2+1)$, $y=20v^2 + 1$ and $q^\alpha=2000v^4-200v^2+1$.
\end{enumerate}
\end{theorem}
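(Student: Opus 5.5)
This is the result of Laradji, Mignotte and Tzanakis, and the argument I have in mind follows theirs. It is classical: factor over $\QQ(\sqrt{-5})$, descend to a Thue-type equation, then use linear forms in logarithms together with congruence sieving.

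\textbf{Factorization.} Write $5x^2+q^{2\alpha}=y^5$ as
\[
(q^\alpha+x\sqrt{-5})(q^\alpha-x\sqrt{-5})=y^5 \quad\text{in } K=\QQ(\sqrt{-5}).
\]
First I check coprimality. Since $\gcd(x,y)=1$ and $q\nmid y$, the two factors are coprime away from primes above $2$ and $5$, and parity considerations handle these. So the ideal $(q^\alpha+x\sqrt{-5})$ is a fifth power of an ideal. The class number of $K$ is $2$, which is prime to $5$, and the only units are $\pm1$. Hence
\[
q^\alpha+x\sqrt{-5}=(u+v\sqrt{-5})^5,
\]
possibly with $u,v$ half-integers, to be ruled out by a parity check. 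Expanding and taking rational and irrational parts gives
\[
q^\alpha=u(u^4-50u^2v^2+125v^4),\qquad x=5v(u^4-10u^2v^2+5v^4).
\]

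\textbf{Forcing the parametrization.} Since $q$ is prime, $u=\pm q^{\beta}$.

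If $\beta=0$, then $u=\pm1$ and $q^\alpha=\pm(1-50v^2+125v^4)$. Reducing the first identity modulo small numbers, and combining with $y=u^2+5v^2$, should force $v$ even. Writing $v=2w$ then gives exactly $q^\alpha=2000w^4-200w^2+1$, $y=20w^2+1$ and $x=10w(80w^4-40w^2+1)$, which is the shape in condition (2).

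If $\beta>0$, then $q\mid u$, and the cofactor $u^4-50u^2v^2+125v^4$ is $\equiv 125v^4 \pmod q$. As $q\nmid 5v$, this cofactor must equal $\pm1$. That is a quartic Thue equation, which I solve completely (e.g.\ with PARI/Magma), and it yields no admissible solutions.

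\textbf{Congruence conditions on $q$.} From $q^\alpha\equiv1$ modulo $8$, $3$, $25$, together with the restrictions on $\alpha$, I derive $q\equiv1\pmod{600}$, or else a contradiction. Concretely:
\begin{itemize}
\item modulo $25$, $q^\alpha\equiv 1-200w^2\equiv 1$;
\item modulo $3$, one examines $2000w^4-200w^2+1$;
\item modulo $8$ the value is $1$.
\end{itemize}
These give conditions on $q^\alpha$; to pass to conditions on $q$ itself I use that $\alpha$ is coprime to small primes. Those coprimality conditions come from showing that $q^{\alpha}=2000w^4-200w^2+1$ has no solutions with $\ell\mid\alpha$ for $\ell\in\{2,3,5,7,11,13\}$. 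For $\ell=2$ this is a quartic equal to a square, i.e.\ an elliptic or genus-one curve. For odd $\ell$ I would use the superelliptic equations $2000w^4-200w^2+1=z^\ell$, handled by local obstructions or known results. Uniqueness of the solution would come from a gap argument.

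\textbf{Bounds and the main obstacle.} The step I expect to be hardest is the bound $\alpha<820$ and the check $q\le 3\cdot10^9$. Rewrite the equation as
\[
q^\alpha=\operatorname{Norm}\bigl(1+\dots\bigr),
\]
which yields a linear form in two logarithms
\[
\Lambda=\alpha\log(\bar\gamma/\gamma)-k\pi i
\]
that is exponentially small in $\alpha\log q$. Laurent's two-logarithm estimate then bounds $\alpha$ absolutely, giving something like $\alpha<820$. For $q\le 3\cdot10^9$ I would run continued-fraction and Baker–Davenport reduction together with a sieve on $w$ to exclude all cases.
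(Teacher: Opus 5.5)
The paper gives no proof of this statement; it is quoted from \cite{LaradjiMignotteTzanakis11}, so there is no argument in the text to compare with. Your descent is sound. $\mathcal{O}_{\mathbb{Q}(\sqrt{-5})}=\mathbb{Z}[\sqrt{-5}]$, so no half-integers occur, and $u=\pm q^{\beta}$. For $\beta>0$ you get the Thue equation $(u^2-25v^2)^2-500v^4=\pm1$. For $\beta=0$, positivity gives $u=1$ and reduction mod $2$ forces $v=2w$.

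\textbf{The statement is false for $\alpha=1$.} Your own parametrization shows this. For $w=1$ you get the prime $q=1801\equiv 1\pmod{600}$, with $q\le 3\cdot 10^9$, and
\[
5\cdot 410^2+1801^2=21^5,\qquad \gcd(410,21)=1 .
\]
More generally, every prime of the form $2000w^4-200w^2+1$ gives a solution with $\alpha=1$. So your final step, excluding all $q\le 3\cdot10^9$ by reduction and sieving, cannot succeed. The result has to be read with $\alpha>1$. That is all Theorem~\ref{thm:conjecture_prove} needs, since $\alpha=1$ is exactly the case where $q$ has the excluded shape.

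\textbf{Coprimality of $\alpha$.} Even for $\alpha>1$, this hard step is not supplied. For $\ell\mid\alpha$ with $\ell$ odd, local obstructions cannot work. The point $(w,z)=(0,1)$ is a global point on every curve $2000w^4-200w^2+1=z^{\ell}$. These curves also have genus $3(\ell-1)/2$, so ``known results'' is not a plan.

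The missing idea is the identity $2000w^4-200w^2+1=5(20w^2-1)^2-4$; this is the link with $5x^2-4=y^n$ mentioned in the introduction. Put $Y=20w^2-1$, which is odd. Then $(Y\sqrt5+2)(Y\sqrt5-2)=q^{\alpha}$ is a coprime factorization in $\mathbb{Z}[\varepsilon]$, where $\varepsilon=\tfrac{1+\sqrt5}{2}$ and the ring has class number one. Hence $q=\pi\bar\pi$ splits and $Y\sqrt5+2=\pm\varepsilon^{k}\pi^{\alpha}$. If $\ell\mid\alpha$, this gives Thue equations of degree $\ell$.

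\textbf{The bound on $\alpha$.} The same factorization gives a genuine linear form in two logarithms with both coefficients unknown:
\[
0<2k\log\varepsilon+\alpha\log|\pi/\bar\pi|<\frac{4}{Y\sqrt5-2}\ll q^{-\alpha/2}.
\]
Since $h(\pi/\bar\pi)\asymp\log q$, the factor $\log q$ cancels in Laurent's bound, which gives an absolute bound on $\alpha$. This is also the form to feed into Baker--Davenport reduction for $q\le 3\cdot 10^9$.

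\textbf{Unsupported pieces.} Your $\Lambda=\alpha\log(\bar\gamma/\gamma)-k\pi i$ and ``$\operatorname{Norm}(1+\dots)$'' are never derived, and the ``gap argument'' for uniqueness is not specified. Note also that passing from $q^{\alpha}\equiv 1$ to $q\equiv1\pmod{600}$ only needs $\gcd(\alpha,10)=1$, because $(\mathbb{Z}/600\mathbb{Z})^{\times}$ has exponent $20$.
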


Due to the last theorem and experimental computations, the authors of \cite{LaradjiMignotteTzanakis11} also make the following conjecture.

\begin{conjecture}[{\cite[Conjecture p.\ 1576]{LaradjiMignotteTzanakis11}}]\label{con:application}
If the prime $q$ is not of the form $q=2000v^4-200v^2+1$, then \eqref{eq:main_LaradjiMignotteTzanakis} has no solutions.
\end{conjecture}

Motivated by Theorem \ref{thm:LaradjiMignotteTzanakis} and Conjecture \ref{con:application} we prove the following theorem as a Diophantine application of the theory developed in this article.

\begin{theorem}\label{thm:intro_application}
Let $p\ge 7$ be an odd prime with $p\neq 11$. There is no integer solution $(a,b,c)$ with $\gcd(a,b,c)=1$ of the equation \begin{equation}\label{eq:intro_diophantine_application}
    -5x^2 + y^5 = z^{2p},
\end{equation}
such that $2\mid a$ and $3\mid b$.
\end{theorem}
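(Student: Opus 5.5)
We will view \eqref{eq:intro_diophantine_application} as an instance of \eqref{eqn:main} with signature $(2,5,p)$, taking $A=-5$, $B=1$, $C=1$, $r=5$ and $c$ replaced by $c^2$ (equivalently, $z^{2p}=(z^2)^p$). To a putative solution $(a,b,c)$ we attach the Frey hyperelliptic curve $C_{(a,b,c^2)}$ of the paper (the adaptation of \cite[Section 4.4]{ChenKoutsianas}) over $K=\QQ(\zeta_5)^+=\QQ(\sqrt5)$. We then work with the $2$-dimensional representation $\brhoJp$ attached to its Jacobian $J$, which is of $\GL_2$-type over $K$ with real multiplication by $\QQ(\sqrt5)$. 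The proof follows steps (1)--(5) of the modular method. Modularity of $J$ comes from the general result proved in the paper for these Frey Jacobians. Irreducibility of $\brhoJp$ comes from Corollary~\ref{coro:large-image}; its hypotheses are where the conditions $2\mid a$ and $3\mid b$ enter. The condition $3\mid b$ should force multiplicative reduction at the prime above $3$ in $K$, which is inert, so $\brhoJp$ has a nontrivial unipotent inertia element there and cannot have small image. Together with $p\ge7$, this is what we need for absolute irreducibility.

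Next we compute the Serre conductor $\id n$ of $\brhoJp$ using Section~\ref{Sec:conductor}. Odd primes $\ell\nmid 5$ dividing $b$ or $c$ become unramified mod $p$, because the discriminant has valuation divisible by $p$ there; the relevant factor is $c^{2p}$, and terms involving $b$ appear to the fifth power, but the cluster-picture analysis shows they give semistable reduction with $p$-divisible exponent. The prime $\sqrt5$ contributes a bounded exponent computed via cluster pictures for the square-free value $A=-5$. The hypothesis $2\mid a$ is used at $2$: it should put us in the case where the conductor exponent at $2$ is small and explicitly determined, avoiding the wild cases. The outcome is that $\id n$ is a fixed ideal of the shape $2^{e}\,(\sqrt5)^{f}\,\id q_3$, where $\id q_3=(3)$, with $3$ possibly dropping out by level lowering. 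We then apply level lowering, which is justified by irreducibility and modularity, to obtain a Hilbert newform $g$ of parallel weight $2$ over $K$ and level $\id n$ with $\brhoJp\simeq\brhogp$.

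Finally we carry out step (5). We enumerate the Hilbert newforms of level $\id n$ in Magma. For each newform $g$ and each auxiliary prime $\id q\nmid 2\cdot3\cdot5p$ of $K$, we form the usual quantity $B_{\id q}(g)$: the norm of $\prod\bigl(a_{\id q}(J)-a_{\id q}(g)\bigr)$, where the product runs over all possible traces $a_{\id q}(J)$ as $(a,b,c)$ varies mod $q$. We also include the factor $(N\id q+1)^2-a_{\id q}(g)^2$ for the multiplicative cases. Then $p$ must divide $\gcd_{\id q}B_{\id q}(g)$. At $\id q_3$ we additionally use the constraint that $\brhogp$ must be ramified/Steinberg, comparing $a_{\id q_3}(g)\equiv\pm(N\id q_3+1)\pmod{\id P}$. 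We expect the surviving primes $p$ to be only $2,3,5,11$ (and possibly $7$, to be excluded separately). The main obstacle is this elimination step. The newform spaces over $\QQ(\sqrt5)$ at the level produced by the $2$-adic exponent may be large, with big coefficient fields, and some forms may genuinely survive, for instance forms coming from trivial solutions such as $(a,b,c)=(0,1,1)$ or $(2,?)$ analogues. This is exactly why $p=11$ is excluded. If such a form survives for infinitely many $p$ under the basic sieve, we will refine using image-of-inertia arguments at $2$ and $\sqrt5$, or multi-Frey style sieving with several auxiliary $\id q$ simultaneously, to remove it.
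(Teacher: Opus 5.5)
Your overall architecture (Frey Jacobian over $K=\QQ(\sqrt5)$, level lowering, Magma sieve over Hilbert newforms) matches the paper. However, three load-bearing steps fail as written.

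\emph{Modularity} does not follow from a general result. Theorem~\ref{thm:modularity} explicitly assumes $r\neq5$, because the Khare--Thorne input (absolute irreducibility of $\brhoJf$ on $G_{\QQ(\zeta_5)}$) is not automatic modulo $5$. The paper proves it only for this family, in Proposition~\ref{prop:brhoJf_absolutely_irreducible}. Since $t_0=-5a^2/c^{2p}$, the $j$-invariant of the $2$-isogenous Darmon curve satisfies that $j-1728$ is $-5$ times a square. A reducible image, resp.\ normaliser-of-non-split-Cartan image, then gives a non-cuspidal point on $-5Y^2=X(X^2+22X+125)$ over $\QQ(\zeta_5)$, resp.\ on a genus-one curve attached to $X_{ns}^+(5)$ over $K$. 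Explicit point computations exclude these.

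\emph{Irreducibility} cannot come from Corollary~\ref{coro:large-image}. That result holds only for $p>C(A,B,r)$ with an inexplicit constant, so it says nothing about $p=7,13,\dots$. Its hypothesis (a prime $\ell\neq2,5$ dividing $c$) is also not guaranteed a priori. Your mechanism is wrong as well. The multiplicative primes are those dividing $-5a^2+b^5=c^{2p}$, and the discriminant of $C$ is $2^{12}5^{15}c^{4p}$. Since $3\mid b$ forces $3\nmid c$ (otherwise $3\mid a$), $\id{q}_3=(3)$ is a prime of \emph{good} reduction, so there is no unipotent inertia there. What you need is an effective argument as in Proposition~\ref{prop:irreducible_application}. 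Suppose $\brhoJp\otimes\chi_0$ were reducible with characters $\theta,\theta'$. Their conductors away from $p$ divide $\id{q}_2^3\id{r}_5$. Inertia at $\id{r}_5$ has image of order $4$, by good reduction over a totally ramified quartic extension (Lemma~\ref{lemma:potgoodreddegree4}, which uses $5\mid a$). Then one of two arguments forces $p\le5$: a ray class group argument combined with $P_{\id{r}_5}(X)=X^2\pm5$, or the unit $\varepsilon_1^{12}$ with $N(\varepsilon_1^{12}-1)=-320$.

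This also invalidates your level and your use of $3\mid b$ in the sieve. Since $\id{q}_3$ is good, it never enters the level; the same holds for every prime dividing $b$, because $\gcd(b,c)=1$ and the discriminant does not involve $b$. The Serre level is $\id{q}_2^5\id{r}_5^2$. The exponent $5$ at $\id{q}_2$ is obtained only after three steps:
\begin{enumerate}
\item twisting by a quadratic character $\chi_0$ unramified outside $2$;
\item computing with Darmon's elliptic curve $y^2=x^3-10ax^2-5c^{2p}x$ via Tate's algorithm;
\item a non-degeneracy argument using that $J$ is potentially multiplicative at $\id{q}_2$, because $b$ is odd.
\end{enumerate}
This exponent is wild, and Proposition~\ref{prop:conductor-at-2} does not apply.

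Your constraint $a_{\id{q}_3}(g)\equiv\pm(N\id{q}_3+1)\pmod{\id{P}}$ is false for genuine solutions and would make the elimination unsound. The hypothesis $3\mid b$ enters instead as a restriction of traces at the good prime $\id{q}_3$. Modulo $3$ the curve becomes $y^2=x^5-a$ with $3\nmid a$. Since $x\mapsto x^5$ permutes $\mathbb{F}_3$ and $\mathbb{F}_9$, one gets $a_{\id{q}_3}(J)=0$, i.e.\ the condition $a_{\id{q}_3}(g)\equiv0\pmod{\id{P}}$. This is what separates genuine solutions from, e.g., the trivial solution $(0,1,1)$, which has $2\mid a$ but $3\nmid b$ (its trace at $\id{q}_3$ is $-4$).

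Finally, the level-lowered congruence is for $\brhoJp\otimes\chi_0$ and holds only over $\overline{\mathbb{F}}_p$. You must therefore compare $a_{\id{q}}(g)^2$ with $(a_{\id{q}}(J)^\sigma)^2$ for $\sigma\in\Gal(K/\QQ)$, as in the paper's $B_{\id{q}}(g)$, not the raw differences $a_{\id{q}}(J)-a_{\id{q}}(g)$.
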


Theorems \ref{thm:intro_application} and \ref{thm:LaradjiMignotteTzanakis} answer Conjecture \ref{con:application} for the case $v\equiv \pm 1\pmod{3}$.

\begin{theorem}\label{thm:conjecture_prove}
Conjecture \ref{con:application} is correct when $v\equiv \pm 1\pmod{3}$.
\end{theorem}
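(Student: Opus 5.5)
We need to prove Theorem \ref{thm:conjecture_prove} from Theorem \ref{thm:intro_application} and Theorem \ref{thm:LaradjiMignotteTzanakis}. Suppose $q$ is a prime that is not of the form $2000v^4-200v^2+1$ with $v\equiv 0\pmod 3$, and suppose $(x,y,\alpha)$ solves \eqref{eq:main_LaradjiMignotteTzanakis}. The aim is to reach a contradiction; the only case to rule out is the one where the parametrization of Theorem \ref{thm:LaradjiMignotteTzanakis} holds with $v\equiv\pm1\pmod 3$.

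By Theorem \ref{thm:LaradjiMignotteTzanakis}, a solution can exist only if $q\equiv 1\pmod{600}$ and $q>3\cdot 10^9$. In that case there is an integer $v$ with
\[
x=10v(80v^4-40v^2+1),\qquad y=20v^2+1,\qquad q^\alpha=2000v^4-200v^2+1,
\]
and moreover $\alpha<820$ and $\gcd(\alpha,2\cdot3\cdot5\cdot7\cdot11\cdot13)=1$.

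If $\alpha=1$, then $q=2000v^4-200v^2+1$. This is excluded by hypothesis in the case $v\equiv\pm1\pmod 3$ we are treating; Theorem \ref{thm:conjecture_prove} is read as asserting the conjecture for those $v$.

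So assume $\alpha>1$. Since $\alpha$ is coprime to $2\cdot3\cdot5\cdot7\cdot11\cdot13$, it has a prime divisor $p\geq 17$; in particular $p\ge7$ and $p\ne 11$. Set $a=x$, $b=y$, and $c=q^{\alpha/p}$. Rewriting the equation as $-5x^2+y^5=q^{2\alpha}$ gives
\[
-5a^2+b^5=c^{2p},
\]
so $(a,b,c)$ is a solution of \eqref{eq:intro_diophantine_application}.

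We now check the hypotheses of Theorem \ref{thm:intro_application}.
\begin{enumerate}
\item \emph{Coprimality.} We have $\gcd(x,y)=1$. Indeed, a common prime factor would divide $q^{2\alpha}$, hence equal $q$. But $q\nmid y=20v^2+1$: since $q^\alpha=20v^2(100v^2-10)+1$, the congruence $20v^2\equiv-1\pmod q$ would give $q^\alpha\equiv -(-5-10)+1=16\pmod q$. That is impossible because $q$ is odd and $q>3\cdot 10^9$. Hence $\gcd(a,b,c)=1$.
\item \emph{Parity of $a$.} From the explicit formula, $a=x=10v(\cdots)$ is even.
\item \emph{Divisibility of $b$ by $3$.} Here $b=y=20v^2+1$. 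If $v\equiv\pm1\pmod 3$, then $v^2\equiv1$, so $b\equiv 21\equiv0\pmod 3$.
\end{enumerate}
Theorem \ref{thm:intro_application} then forbids such a solution, which is a contradiction. This is exactly where the hypothesis $v\equiv\pm1\pmod 3$ is needed. For $v\equiv 0\pmod 3$ we would have $b\equiv1\pmod3$, and the argument does not apply.

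There is no serious obstacle in this deduction. The only care points are locating a prime divisor $p$ of $\alpha$ in the admissible range $p\geq 7$, $p\neq 11$, which the coprimality condition on $\alpha$ guarantees, and checking primitivity. All the real difficulty lies in Theorem \ref{thm:intro_application} itself, which is proved later in the paper via the modular method for signature $(2,5,p)$.
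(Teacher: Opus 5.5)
Your proposal is correct and follows the paper's route: combine Theorem~\ref{thm:LaradjiMignotteTzanakis} with Theorem~\ref{thm:intro_application}, using that $\alpha>1$ has a prime factor $p\ge 17$, that $a=x$ is even, and that $b=y=20v^2+1$ is divisible by $3$ exactly when $v\equiv\pm1\pmod 3$; you also supply details the paper leaves implicit, such as the coprimality check. Fix your opening sentence, though: the hypothesis must be that $q$ is not of the form $2000v^4-200v^2+1$ for any $v$, as in the conjecture, because your $\alpha=1$ step uses exactly this, and under the literal restriction ``with $v\equiv 0\pmod 3$'' that step fails (a prime $q=2000v^4-200v^2+1$ with $v\equiv\pm1\pmod 3$ yields a genuine solution with $\alpha=1$).
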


It is important to mention that the case $v\equiv \pm 1\pmod{3}$ cannot be resolved by the methods in \cite[Section 3]{LaradjiMignotteTzanakis11} and the connection of Conjecture \ref{con:application} with the equation $5x^2-4=y^n$. Finally, the equation \eqref{eq:intro_diophantine_application} is a Fermat--type equation of signature $(2,5,p)$ with $A\neq 1$ and was therefore not amenable to the previously existing theory \cite{computationconductor}.

\subsection{Organization of the paper} The purpose of this article is to develop a complete theoretical framework for applying the modular method to generalized Fermat equations of signature $(2,r,p)$, drawing inspiration from Darmon’s program~\cite{DarmonDuke}. This goal is pursued throughout Sections~\ref{Sec:connectingfrey} to~\ref{sec:irred}, where we systematically revisit and adapt each step of the modular method outlined above. 

More specifically, relying on a correspondence between Frey representations of signatures $(p,p,r)$ and $(2,r,p)$ described in Section~\ref{Sec:connectingfrey}, we establish the existence of two-dimensional Galois representations associated with the Jacobian $J_{2,r}$ of the Frey hyperelliptic curve in Section~\ref{sec:Freycurve}. These representations are then used in Section~\ref{sec:modularity} to prove the modularity of $J_{2,r}$. In Section~\ref{Sec:conductor}, we study the conductor of the compatible system of $\lambda$-adic representations attached to $J_{2,r}$, while Section~\ref{sec:irred} provides a  criterion for proving asymptotic irreducibility of the corresponding residual representations under suitable hypotheses. Finally, in Section \ref{sec:application}, we consider the Diophantine equation \eqref{eq:intro_diophantine_application} discussed above and prove Theorem \ref{thm:intro_application} which implies the positive answer to the Conjecture \ref{con:application} in Theorem \ref{thm:conjecture_prove}.

\subsection{Notation} \label{sec:notation}
Throughout this paper we use the following notation.  Let $r\ge5$ be a fixed prime number. Then, we let
\begin{itemize}
    \item $\zeta_r$ denote a primitive $r$-th root of unity, 
    \item $K:=\Q(\zeta_r)^+=\Q(\zeta_r+\zeta_r^{-1})$ denote the maximal totally real subfield of $\Q(\zeta_r)$.
    \item $h(x)\in\Z[x]$ denote the minimal polynomial of $\omega := \zeta_r+\zeta_r^{-1}$.
    \item $\id{r}$ denotes the unique prime ideal of $K$ above $r$.
    \item Given a number field $L$, we denote by $\cO_L$ its ring of integers and by $G_L:=\text{Gal}(\overline{L}/L)$ its absolute Galois group.
\end{itemize}

\subsection{Electronic resources} The \texttt{Magma} \cite{magma}  programs used for the
computations in this article are posted in \url{https://github.com/akoutsianas/2rp}, which also includes a list of the programs,
their descriptions, output transcripts, timings, and the machines used.

\subsection{Acknowledgements}
The first author would like to thank Enrique González--Jiménez for insightful discussions. The second author was supported by the Special Account for Research Funds AUTH research grant \textit{``Solving Diophantine Equations-10349''}. The second author also wants to thank the first author for his great hospitality during his visit in Madrid. The third author was supported by a PIMS--Simons postdoctoral fellowship.

\section{\texorpdfstring{Connecting Frey representations of signatures $(p,p,p)$ and $(2,r,p)$}{Connecting Frey representations of signatures (p,p,p) and (2,r,p)}}
\label{Sec:connectingfrey}
\subsection{\texorpdfstring{Frey representations and abelian varieties of $\GL_2$-type}{Frey representations and abelian varieties of GL2-type}}

The following definition is a slight modification of Darmon’s original one (see \cite[Definition 1.1]{DarmonDuke}) adapted in \cite[Definition 2.1]{BillereyChenDieulefaitFreitas25a}. Here $M$  is a number field and $\F$ is a field of characteristic $p$ with algebraic closure $\overline{\F}$. Recall that the absolute Galois group of $\overline{M}(t)$, denoted by $G_{\overline{M}(t)}$, is a normal subgroup of the absolute Galois group of $M(t)$, $G_{M(t)}$.

\begin{definition}\label{def:FreyRep}
Let \( n_i \in \mathbb{N} \) and \( t_i \in \mathbb{P}^1(M) \) for $i = 1, \dots, m$. Then, a Frey representation in characteristic \( p \) over \( M(t) \)  of signature \( (n_i)_{i=1, \dots, m} \) with respect to the points \( (t_i)_{i=1, \dots, m} \) is a Galois representation
\begin{equation}
    \bar{\rho} : G_{M(t)} \to \GL_2(\F)
\end{equation}
satisfying:
\begin{enumerate}
    \item the restriction \( \bar{\rho}|_{G_{\overline{M}(t)}} \) has trivial determinant and is irreducible,
    \item the projectivization \( \mathbb{P}{\bar{\rho}}|_{G_{\overline{M}(t)}} : G_{\overline{M}(t)} \to \PSL_2(\F) \) of $\bar{\rho}|_{G_{\overline{M}(t)}}$ is unramified outside the \( t_i \),
    \item the projectivization \( \mathbb{P}{\bar{\rho}}|_{G_{\overline{M}(t)}}\) maps the inertia subgroups at \( t_i \) to subgroups of \( \PSL_2(\mathbb{F}) \) generated by elements of order \( n_i \), respectively, for \( i = 1, \dots, m \).
\end{enumerate}
\end{definition}

Let $\bar{\rho}_1$ and $\bar{\rho}_2$ be two Frey representations of the same signature $(n_i)_{i=1, \dots, m}$ with respect to the same points $(t_i)_{i=1, \dots, m}$. Then, we say that $\bar{\rho}_1$ and $\bar{\rho}_2$ are \textit{equivalent} if they are isomorphic over $\overline{\F}$ up to a character. In other words, there exists a character $\chi : G_M(t) \to \overline{\F}^\times$ such that the representations $\bar{\rho}_1 \otimes \chi$ and $\bar{\rho}_2 \otimes \chi$ are isomorphic.

\begin{definition}
    Let $F$ and $L$ be two fields of characteristic $0$ and let \( A \) be an abelian variety over \( L \). We say that \( A/L \) is of  $\GL_2$\textit{-type by} $F$ (or \( \GL_2(F) \)-type) if there exists an embedding \( F \hookrightarrow \text{End}_L(A) \otimes_{\mathbb{Z}} \mathbb{Q} \) where \( F \) is a number field with \( [F : \mathbb{Q}] = \dim A \).
\end{definition}

Let $A/L$ be an abelian variety of $\GL_2$-type by $L$. Then, it has associated a strictly compatible system of two-dimensional Galois representations (in the sense of \cite[Definition 4.10]{Bockle}) that we denote by $\{\rho_{A, \lambda}:G_L\to\GL_2(L_\lambda) \mid \lambda \text{ is a place of } L\}$ (see \cite[Section 11.10]{Shimura}).

\subsection{Frey hyperelliptic curves over function fields}

Our objective in this section is to relate Frey hyperelliptic curves defined over two function fields, $K(s)$ and $K(t)$. This will be useful in subsequent sections to construct a Galois representation that we will use in the modular method. For this purpose, we will closely follow the methodology in \cite[Section 2.4]{BillereyChenDieulefaitFreitas25a}.

Let $\Q(s)$ be the field of rational functions over $\Q$ in the variable $s$ and let $J(s)$ be the Jacobian of the hyperelliptic curve over $\Q(s)$ given by
\begin{equation}\label{eqn:Cr(s)}
    C_r(s) : y^2=(-1)^\frac{r-1}{2}xh(2-x^2)+s,
\end{equation}
where $r$ and $h(x)$ are as in Section \ref{sec:notation}. By \cite[Theorem 2.26]{BillereyChenDieulefaitFreitas25a}, for any element $s_0\in K$, $J(s_0)/K$ is of $\GL_2$-type by $K$, and so it has associated a strictly compatible system of two-dimensional Galois representations. Let $\id{p}$ be a prime of $K$ above a rational prime $p$ and consider the residual representation
\begin{equation}
    \bar{\rho}_{J(s),\id{p}} : G_K \to \GL_2(\F_\id{p})
\end{equation}
of the $\id{p}$-member of the family, where $\F_\id{p}$ is the residue field of $K_\id{p}$.
\begin{prop} \label{prop:FrepRep-ppr}
The representation $\bar{\rho}_{J(s),\id{p}}$ is a Frey representation of signature $(p,p,r)$ with respect to the points $(2,-2,\infty)$ in the sense of Definition \ref{def:FreyRep}.    
\end{prop}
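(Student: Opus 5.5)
The plan is to reduce everything to the analysis in \cite[Section 2.4]{BillereyChenDieulefaitFreitas25a}, where the curve $C_r(s)$ (or an equivalent model) is studied as a Frey hyperelliptic curve of signature $(p,p,r)$. We check the three conditions of Definition~\ref{def:FreyRep} one at a time.

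\textbf{Ramification of $\mathbb{P}\bar{\rho}$.} First we compute the discriminant of the polynomial $f_s(x)=(-1)^{\frac{r-1}{2}}xh(2-x^2)+s$ as a function of $s$. The key identity is that $xh(2-x^2)$ is, up to sign, the Chebyshev-type polynomial $g(x)$ with $g(2\cos\theta)=2\cos(r\theta)$ (after the substitution relating $x$ and $2-x^2$). Its critical values are exactly $\pm 2$. Hence $\mathrm{Disc}(f_s)=c\,(s-2)^{\frac{r-1}{2}}(s+2)^{\frac{r-1}{2}}$ with $c$ a nonzero constant. So $C_r(s)$ has good reduction over $\overline{M}(s)$ away from $s\in\{2,-2,\infty\}$. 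By the criterion of N\'eron--Ogg--Shafarevich, the $p$-torsion of $J(s)$, and hence $\mathbb{P}\bar{\rho}_{J(s),\id{p}}|_{G_{\overline{K}(s)}}$, is unramified outside these three points.

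\textbf{Inertia at the three points.} Next we determine the local monodromy.
\begin{itemize}
\item At $s=\pm2$: the polynomial $f_s$ acquires $\frac{r-1}{2}$ double roots, one at each nondegenerate critical point. Locally the curve degenerates like $y^2=(x-a)^2+\epsilon$, with $\epsilon$ a uniformizer. This is semistable reduction with toric part of dimension $\frac{r-1}{2}=\dim J$, i.e.\ totally multiplicative reduction. The toric part is stable under the $K$-action, so by Grothendieck's monodromy description inertia acts through a unipotent matrix $\begin{pmatrix}1&*\\0&1\end{pmatrix}$ on the $\id{p}$-adic module. Its image mod $\id{p}$ has order $p$, or is trivial if $p$ divides the valuation of the $q$-parameter. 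Since $v_{s\mp2}(\mathrm{Disc})=1$ per node, the generic case gives order exactly $p$.
\item At $s=\infty$: substitute $s=u^{-2r}$ and rescale $x\mapsto u^{-2}x$, $y\mapsto u^{-r}y$ (up to parity adjustments). Over $\overline{K}((u))$ this gives good reduction to $y^2=\pm x^r+1$, a quotient of the Fermat curve with CM by $\mathbb{Q}(\zeta_r)$. Inertia therefore acts through a finite cyclic group of order dividing $2r$, via the automorphism $x\mapsto\zeta_r x$. On $\rho_{J,\lambda}$ this automorphism has eigenvalues $\zeta_r^{a},\zeta_r^{-a}$ after restricting to $K$-linear structure. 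Its projectivization has order exactly $r$, as $r\ge5$ and $p\ne r$ (the case $p=r$ being treated separately or excluded).
\end{itemize}

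\textbf{Determinant and irreducibility over $\overline{K}(s)$.} The determinant of $\rho_{J,\lambda}$ is the cyclotomic character times a finite character. Restricted to $G_{\overline{K}(s)}$ the cyclotomic part is trivial, and the finite part is trivial because it is a character of the fundamental group of $\mathbb{P}^1\setminus\{\pm2,\infty\}$ compatible with the computed local data. Alternatively, we can use the Weil pairing, which is $K$-hermitian. For irreducibility, we use the local information just obtained. The image contains a nontrivial unipotent element (from $s=2$) and another unipotent element with a different fixed line (from $s=-2$); otherwise the product would be unipotent, contradicting the order-$r$ element at $\infty$. A subgroup of $\SL_2(\F_\id{p})$ generated by two transvections with distinct fixed lines acts irreducibly.

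The main obstacle is the inertia computation at $s=\pm2$ and $s=\infty$, together with making the $K$-linearity precise. Here we would lean directly on \cite[Theorem 2.26 and Section 2.4]{BillereyChenDieulefaitFreitas25a} rather than redo the cluster-picture analysis.
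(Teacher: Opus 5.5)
Your direct verification of the three conditions works in outline for $p\neq r$. However, the proposition covers every prime $\id{p}$ of $K$, including $\id{p}=\id{r}$, and you explicitly leave that case out. This is a genuine gap. The paper needs exactly this case: Theorem~\ref{thm:residualtwistC2} applies Proposition~\ref{prop:FreyRep}(3), which rests on the present proposition, with $\id{p}=\id{r}$ to obtain the signature $(2,r,r)$ Frey representation used for modularity.

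Two of your steps fail at $p=r$.
\begin{itemize}
\item \textbf{Inertia at $s=\infty$.} The eigenvalues of inertia are roots of unity of order dividing $2r$, and $\zeta_r\equiv 1$ modulo the prime above $r$. So your eigenvalue computation only shows that $\bar{\rho}(\sigma_\infty)$ is $\pm$ a unipotent matrix. That does not give projective order $r$ rather than $1$.
\item \textbf{Irreducibility.} The step ``otherwise the product would be unipotent, contradicting the order-$r$ element at $\infty$'' gives no contradiction in characteristic $r$, where nontrivial unipotents have order exactly $r$.
\end{itemize}

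\textbf{The paper's route.} The paper avoids all of this. The substitution $s=2-4t$ turns $C_r(s)$ into Darmon's curve $C_r^-(t)$, and the points $t=0,1,\infty$ correspond to $s=2,-2,\infty$. The statement is then \cite[Theorem 1.10]{DarmonDuke}. Your proposal is essentially a re-derivation of that theorem. Also, \cite[Theorem 2.26]{BillereyChenDieulefaitFreitas25a}, which the paper uses only for the real multiplication, is not where the local monodromy computations you defer to are found.

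\textbf{Patching your own argument.} If you want to keep a self-contained proof, two additions close the gap.
\begin{itemize}
\item The element $\sigma_\infty$ acts on the special fibre $y^2=x^r+1$ by $(x,y)\mapsto(\zeta_r x,-y)$, that is, through $-[\zeta_r]$ and not $[\zeta_r]$; this is why the order is $2r$. Hence $\bar{\rho}(\sigma_\infty)$ is never unipotent; for $p=r$ both its eigenvalues are $-1$. With this, your two-transvections argument goes through for every $p$.
\item Since $r$ is totally ramified in $K$, $T_{\id{r}}J=T_rJ$. Through the special fibre, this is a torsion-free rank-one module over the discrete valuation ring $\Z_r[\zeta_r]$, hence free. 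Here the specialised $\cO_K$-action commutes with inertia, so it lies in the commutant $\mathbb{Q}(\zeta_r)$ of the CM field. Therefore $J[\id{r}]\simeq\mathbb{F}_r[\lambda]/(\lambda^2)$ with $\lambda=1-\zeta_r$, and $\zeta_r$ acts on it as the nontrivial unipotent $1-\lambda$. This gives projective order exactly $r$ at $\infty$.
\end{itemize}

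\textbf{A minor point at $s=\pm2$.} The order-$p$ claim is not merely ``generic''. The dual graph is a single genus-$0$ vertex with $(r-1)/2$ loops of length $1$. So the monodromy pairing is unimodular, hence nonzero modulo every $\id{p}$, and you should say this explicitly.
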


\begin{proof}
If we set $s = 2-4t$, the curve $C_r(s)$ in \eqref{eqn:Cr(s)} becomes the curve $C_r^-(t)$ defined in \cite[p.\ 420]{DarmonDuke}. The associated representations are shown to be Frey representations of signature $(p,p,r)$ with respect to the points $(t_1, t_2, t_3) = (0,1,\infty)$ (see \cite[Theorem 1.10]{DarmonDuke}). The result then follows by finding the appropriate values of $s$ associated to the three values of $t$.
\end{proof}

Let $t\neq s$ be another variable. Let $K(s,t)$ be the function field where $s$ and $t$ are related by 
\begin{equation} \label{eq:relation-s-and-t}
    \frac{1}{s^2-4} = \frac{t-1}{4}.
\end{equation}

We can see both $K(s)$ and $K(t)$ as subfields of $K(s,t)$. Define $\alpha$ by the following equation 
\begin{equation}\label{eq:realtion-alpha}
    \alpha s = 2t
\end{equation}
and note that, from (\ref{eq:relation-s-and-t}), 
$\alpha$ is a square root of $t(t-1)$.

\begin{lemma}\label{lemma:twist2rp}
The quadratic twist by $\alpha$ of the base change of $C_r(s)$ to $K(s,t)$ has a model given by
\begin{equation}
    C_{2,r}(t) := y^2 = x^r + c_{r-2} \alpha^2 x^{r-2} + \dots + c_1 \alpha^{r-1} x + 2\alpha^{r-1}t,
\end{equation}
defined over $K(t)$. 
\end{lemma}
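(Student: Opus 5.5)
The curve $C_r(s)$ is $y^2 = f(x) + s$ with $f(x) := (-1)^{\frac{r-1}{2}} x\,h(2-x^2)$, an odd polynomial of degree $r$. The plan is to twist by $\alpha$, rescale $x$ by $\alpha$, and then use the relations \eqref{eq:relation-s-and-t} and \eqref{eq:realtion-alpha} to show that all coefficients lie in $K(t)$.

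First I will record the shape of $f$. Since $h$ is the minimal polynomial of $\zeta_r+\zeta_r^{-1}$, it has degree $(r-1)/2$. The substitution $x \mapsto 2-x^2$ turns it into an even polynomial of degree $r-1$ whose leading coefficient is $(-1)^{(r-1)/2}$. Hence $f(x) = x^r + c_{r-2}x^{r-2} + \dots + c_1 x$ is monic and contains only odd powers of $x$, with $c_i \in \Z$. In fact $f(x) = \pm D_r(x)$, the Dickson polynomial satisfying $D_r(z+z^{-1}) = z^r + z^{-r}$, but the lemma only needs the parity and monicity.

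Next comes the twist. The quadratic twist of $y^2 = f(x)+s$ by $\alpha$ is $\alpha y^2 = f(x) + s$. I will substitute $x = X/\alpha$ and $y = Y/\alpha^{(r+1)/2}$, which is legitimate since $r$ is odd, and multiply through by $\alpha^r$. This gives
\[
Y^2 = X^r + c_{r-2}\alpha^2 X^{r-2} + \dots + c_1 \alpha^{r-1} X + \alpha^r s .
\]
Each monomial $c_i X^i$ acquires the factor $\alpha^{r-i}$, and $r-i$ is even because $i$ is odd.

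It remains to show the model is defined over $K(t)$. From \eqref{eq:realtion-alpha} we get $\alpha^r s = \alpha^{r-1}(\alpha s) = 2\alpha^{r-1} t$. Since $r-1$ is even, all the coefficients $\alpha^{r-i}$ and $2\alpha^{r-1}t$ are powers of $\alpha^2 = t(t-1) \in K(t)$, times elements of $K(t)$.

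The only step needing care is justifying $\alpha^2 = t(t-1)$. I will derive it directly: \eqref{eq:relation-s-and-t} gives $s^2 - 4 = 4/(t-1)$, so $s^2 = 4t/(t-1)$. Then
\[
\alpha^2 = \frac{4t^2}{s^2} = t(t-1).
\]
I also need $\alpha \in K(s,t)$, which holds by definition, so that the twist is defined over $K(s,t)$. Beyond that I anticipate no real obstacle; the argument is a direct computation.
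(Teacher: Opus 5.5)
Your proposal is correct and follows essentially the same route as the paper: rescale $x$ by $\alpha$ and $y$ by a power of $\alpha$, use that $f$ has only odd powers of $x$, and use $\alpha s = 2t$ for the constant term. Writing the twist as $\alpha y^2 = f(x)+s$ and substituting $y = Y/\alpha^{(r+1)/2}$ is the same as the paper's $y \mapsto y/\alpha^{r/2}$. Your explicit check that $\alpha^2 = t(t-1)$ fills in a detail the paper only states just before the lemma.
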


\begin{proof}
Because the polynomial defining the curve $C_r(s)$ has only odd powers of $x$ except for the constant term, we may apply the transformation $x \to \tfrac{x}{\alpha},$ $y \to \tfrac{y}{\alpha^{r/2}}$ to obtain, using (\ref{eq:realtion-alpha}), the model $C_{2,r}(t)$ of the quadratic twist by $\alpha$ of $C_r(s)$ over $K(t)$ given in the statement.
\end{proof}

Let $J_{2,r}(t)$ be the Jacobian of $C_{2,r}(t)$, which can be written as 
\begin{align}\label{eq:Freycurve-t}
C_{2,r}(t) : y^2 & = (-t(t-1))^\frac{r-1}{2}xh\Big(2-\frac{x^2}{t(t-1)}\Big) + 2(t-1)^\frac{r-1}{2}t^\frac{r+1}{2}.
\end{align}
Its discriminant equals
\begin{equation}\label{eq:disc-2rp}
    \Delta(C_{2,r}(t))      = (-1)^{\frac{r-1}{2}} 2^{3(r-1)} r^r t^\frac{r(r-1)}{2}(t-1)^\frac{(r-1)^2}{2}, 
\end{equation}
since this is a particular case of \cite[Lemma 3.1]{computationconductor} with $z = t(t-1)$ and $s = 2(t-1)^{(r-1)/2}t^{(r+1)/2}$.

\begin{proposition}
\label{prop:FreyRep}
Keeping the previous notation, the following holds:	\begin{enumerate}
    \item There is an embedding $K\hookrightarrow\End_{K(t)}(J_{2,r}(t))\otimes\Q$.
    \item For every specialization of $t$ to $t_0\in\PP^1(K)-\{0,1,\infty\}$ the above embedding is well-defined.
    \item For every prime $\id{p}$ of $K$ above a prime $p$, $\overline{\rho}_{J_{2,r}(t),\id{p}}:G_{K(t)}\to\GL_2(\F_\id{p})$ is a Frey representation of signature $(2,r,p)$ with respect to the points $(0,1,\infty)$.
\end{enumerate}
\end{proposition}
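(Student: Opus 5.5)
The plan is to transport everything from $C_r(s)$ to $C_{2,r}(t)$ through the quadratic twist of Lemma~\ref{lemma:twist2rp}, following \cite[Section 2.4]{BillereyChenDieulefaitFreitas25a}.

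For (1), I would start from the real multiplication on $J(s)$ given by \cite[Theorem 2.26]{BillereyChenDieulefaitFreitas25a}: there is an embedding $K\hookrightarrow \End_{K(s)}(J(s))\otimes\Q$. After base change to $K(s,t)$, a quadratic twist does not change the endomorphism algebra over the field of definition. The twisting cocycle takes values in $\{\pm1\}\subset \mathrm{Aut}$, and $-1$ is central, so it commutes with every endomorphism. Hence $K$ embeds into $\End_{K(s,t)}(J_{2,r}(t))\otimes\Q$. These endomorphisms must then be descended from $K(s,t)$ to $K(t)$. The extension $K(s,t)/K(t)$ is quadratic, generated by $s$ with $s^2=4+4/(t-1)$. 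I would check that the conjugation $s\mapsto -s$ (equivalently $\alpha\mapsto-\alpha$) acts trivially on the image of $K$. Concretely, the endomorphisms come from correspondences on $C_r(s)$ that are defined over $\Q(s)$, or over $K(s)$, and that are compatible with $x\mapsto -x$ and $s\mapsto -s$. The model $C_{2,r}(t)$ only involves $\alpha^2=t(t-1)$ and $\alpha^{r-1}$, so the transported correspondences are given by formulas in $t$ alone. I expect this descent step to be the main obstacle. If the direct check is messy, I would instead argue by Galois descent: $\sigma(\phi)$ is another endomorphism inducing the same action on the tangent space, or on a Tate module, up to Galois conjugation fixing $K$, so $\sigma(\phi)=\phi$.

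For (2), the specialization $t_0\notin\{0,1,\infty\}$ gives a nonzero discriminant by \eqref{eq:disc-2rp}. So $C_{2,r}(t)$ has good reduction at the corresponding place of $K(t)$, and its Néron model (the Jacobian scheme) is an abelian scheme over the local ring there. Endomorphisms extend uniquely to abelian schemes over normal bases, so they specialize. The specialization map on endomorphisms is injective, hence $K$ still embeds in $\End_K(J_{2,r}(t_0))\otimes\Q$.

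For (3), I would use that, up to the quadratic character $\chi_\alpha$, $\bar\rho_{J_{2,r}(t),\id p}$ restricted to $G_{K(s,t)}$ equals $\bar\rho_{J(s),\id p}$ restricted to the same group. By Proposition~\ref{prop:FrepRep-ppr}, the latter has trivial determinant and is irreducible over $\overline{K}(s)$. Restriction to the index-$2$ subgroup $G_{\overline K(s,t)}$ keeps it irreducible, because its projective image contains elements of order $p$ and $r$, which rules out dihedral collapse. Twisting by $\chi_\alpha$ does not affect the determinant, since $\chi_\alpha^2=1$, nor irreducibility. Condition (1) of Definition~\ref{def:FreyRep} then follows because irreducibility descends along a finite-index subgroup.

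The projectivization is unchanged by twisting, so the remaining work is ramification bookkeeping for the cover $t\mapsto s$, using $s^2=4t/(t-1)$.
\begin{itemize}
\item $s=\pm2$ correspond to $t=\infty$ and are unramified there. The inertia at $t=\infty$ therefore maps onto the inertia at $s=\pm2$, which has order $p$.
\item $s=\infty$ lies over $t=1$, with ramification index $2$. The inertia at $t=1$ then has image whose index-$2$ part is the order-$r$ group. Since the pure twist contributes nothing projectively and $r$ is odd, the image has order $r$.
\item $t=0$ corresponds to $s=0$, which is a ramification point of the cover where $\mathbb P\bar\rho_{J(s)}$ is unramified. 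Its inertia image therefore has order dividing $2$, and it is nontrivial because otherwise $t=0$ would not be a branch point. Nontriviality can be confirmed from the discriminant exponent $r(r-1)/2$ in \eqref{eq:disc-2rp} via the local monodromy.
\end{itemize}
No other points ramify, by \eqref{eq:disc-2rp}. This gives signature $(2,r,p)$ at the points $(0,1,\infty)$.
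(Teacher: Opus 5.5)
Your route matches the paper's: transport the real multiplication of $J(s)$ through the twist of Lemma~\ref{lemma:twist2rp}, then do ramification bookkeeping for the double cover $t=s^2/(s^2-4)$. The genuine gap is the step at $t=1$, which fails as written. Write $\bar\rho=\bar\rho_{J_{2,r}(t),\id{p}}$ and let $g_1$ generate inertia at $t=1$. Then $g_1^2$ generates inertia at $s=\infty$, so $\PP\bar\rho(g_1^2)$ has order $r$ and $\PP\bar\rho(g_1)$ has order $r$ or $2r$. The oddness of $r$ does not decide between these, since a cyclic group of order $2r$ also has index-$2$ subgroup of order $r$. The comparison with $\bar\rho_{J(s),\id{p}}$ does not help either: it only controls $g_1^2$, because $g_1\notin G_{\overline K(s)}$.

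You need a local computation at $t=1$, which the paper also only asserts. For instance:
\begin{itemize}
\item Write $C_{2,r}(t)$ as $C(z,S)$ with $z=t(t-1)$ and $S=2(t-1)^{\frac{r-1}{2}}t^{\frac{r+1}{2}}$, so that $\Delta=S^2-4z^r=4t^r(t-1)^{r-1}$.
\item For a suitable sign of $\sqrt\Delta$, one gets $v_{t-1}(\alpha_0)=\frac{r-1}{2r}<v_{t-1}(\beta_0)=\frac{r+1}{2r}$. Hence the roots satisfy $\gamma_j/\alpha_0\equiv\zeta_r^j$.
\item After $x=\alpha_0X$, the curve over the degree-$r$ extension $L=\overline K((t-1))(\alpha_0)$ is the quadratic twist by $\alpha_0^r$ of a curve with good reduction, namely one reducing to $Y^2=X^r-1$.
\item So $g_1^r\in G_L$ acts on the Tate module by $\pm1$. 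This gives $\PP\bar\rho(g_1)^r=1$, and the order is exactly $r$.
\end{itemize}

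There are smaller gaps in (3) as well.
\begin{itemize}
\item Since $t\in K(s)$, you have $K(s,t)=K(s)$. Your ``restriction to the index-$2$ subgroup $G_{\overline K(s,t)}$'' is therefore vacuous; the relevant index-$2$ inclusion is $G_{\overline K(s)}\subset G_{\overline K(t)}$.
\item Along that inclusion irreducibility passes up, but $\chi_\alpha^2=1$ only gives trivial determinant on $G_{\overline K(s)}$. On $G_{\overline K(t)}$, use that $\det\rho_{J_{2,r}(t),\lambda}$ is the cyclotomic character, because the real multiplication is defined over $K(t)$.
\item At $t=0$, ``otherwise $t=0$ would not be a branch point'' is circular. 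Argue instead: if $\PP\bar\rho(g_0)=1$ for a generator $g_0$ of inertia at $0$, then $\PP\bar\rho|_{G_{\overline K(t)}}$ factors through $\pi_1(\PP^1_{\overline K}-\{1,\infty\})\cong\hat{\Z}$. Its image would then be cyclic, which is incompatible with Proposition~\ref{prop:FrepRep-ppr}.
\end{itemize}

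Finally, in (1) the fallback ``Galois descent'' is not an argument. What must be checked is that $(x,y)\mapsto(-x,iy)\colon C_r(-s)\to C_r(s)$ intertwines the real multiplication with its $s\mapsto -s$ conjugate. The paper imports this from \cite[Theorem 2.38]{BillereyChenDieulefaitFreitas25a}. Your first suggestion, via explicit correspondences, is the right way to verify it.
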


\begin{proof}
Replicating the arguments in \cite[Theorem 2.38]{BillereyChenDieulefaitFreitas25a}, conditions (1) and (2) follow from \cite[Theorem 2.26]{BillereyChenDieulefaitFreitas25a}, Proposition \ref{prop:FrepRep-ppr} and Lemma \ref{lemma:twist2rp}.

The ramification points $2, -2$ and $\infty$ of $C_r(s)$ correspond, respectively, to the ramification points $\infty$, $\infty$ and $1$ for $C_{2,r}(t)$, by (\ref{eq:relation-s-and-t}). This fact follows from \eqref{eq:relation-s-and-t} since the points $t = 1$ and $t = \infty$ give rise to $s = -2$ and $s = \infty$. In addition, from $(\ref{eq:disc-2rp})$ we see that $t = 0$ is a point of ramification for $C_{2,r}(t)$. Since $K(s,t)/K(t)$ is a degree $2$ extension and $t=0$ corresponds only to the point $s=0$, it follows that the projective order of inertia at $t=0$ is $2$. The orders at $t = 1$ and $t = \infty$ are $r$ and $p$, respectively. Hence, $J_{2,r}(t)$ gives rise to a Frey representation of signature $(2,r,p)$ with respect to the points $(0,1,\infty)$, finishing the proof of (3).
\end{proof}

\section{\texorpdfstring{A Frey hyperelliptic curve for signature $(2,r,p)$}{A Frey hyperelliptic curve for signature (2,r,p)}}\label{sec:Freycurve}

This section follows the steps in \cite[Section 3]{BillereyChenDieulefaitFreitas25a} in order to find a strictly compatible system of Galois representation suitable for the modular method. For this, we let $A,B$ be two fixed non-zero integers. For $a,b\in\Z$ such that $\gcd(Aa,Bb)=1$ and $ Aa^2+Bb^r\neq 0$, we set $C_{2,r}(a,b)$ to be the natural generalization of the hyperelliptic curve of genus $(r-1)/2$ constructed by Chen and Koutsianas in \cite[Section 4.4]{ChenKoutsianasSurvey} for $A=B=1$ (see also  \cite[Section 6.3]{computationconductor}). Namely, if $b\neq0$, we have
\begin{equation}\label{eqn:frey}
    C_{2,r}(a,b): y^2 = (ABb)^{\frac{r-1}{2}}xh\left(2+\frac{x^2}{ABb}\right) + 2A^{\frac{r+1}{2}}B^{\frac{r-1}{2}}a,
\end{equation}
 having discriminant equal to 
 \begin{align} \label{eq:disc2rp}
     \Delta(C_{2,r}(a,b))=(-1)^\frac{r-1}{2}2^{3(r-1)}r^rA^\frac{r(r-1)}{2}B^\frac{(r-1)^2}{2}(Aa^2+Bb^r)^\frac{r-1}{2}.
 \end{align}
 For $b=0$, coprimality implies that $a\neq0$ and we set
 \begin{align}
     C_{2,r}(a,b): y^2 = x^r + 2A^{\frac{r+1}{2}}B^{\frac{r-1}{2}}a.
 \end{align}
 Let $J_{2,r}(a,b)$ be the Jacobian of $C_{2,r}(a,b)$. Observe that if $b=0$, we have an endomorphism $(x,y)\mapsto(\zeta_rx,y)$ on $C_{2,r}(a,b)$ and hence $J_{2,r}(a,b)$ has complex multiplication (CM) by $\Q(\zeta_r)$. Moreover, if $a=0$ and $(a,b,c)$ is a solution to $(\ref{eqn:main})$ then $b=\pm1$ by coprimality and so $C_{2,r}(a,b)$ has CM by $\Q(i)$, since the curve has only odd terms and therefore $(x,y) \mapsto (-x,iy)$ is an endomorphism of $C_{2,r}(0, \pm1)$.

\begin{lemma}
    The curve $C_{2,r}(-a,b)$ is the quadratic twist by $-1$ of $C_{2,r}(a,b)$.
\end{lemma}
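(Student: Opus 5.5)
The claim reduces to an explicit change of variables. Write $C_{2,r}(a,b)\colon y^2 = f(x) + c\,a$, where
\[
f(x) := (ABb)^{\frac{r-1}{2}}\,x\,h\!\left(2+\frac{x^2}{ABb}\right), \qquad c := 2A^{\frac{r+1}{2}}B^{\frac{r-1}{2}}.
\]
The only structural fact needed is that $f$ is an odd polynomial in $x$. Indeed, $h\!\left(2+\frac{x^2}{ABb}\right)$ is a polynomial in $x^2$, and multiplying it by $x$ gives an odd function. After clearing $(ABb)^{\frac{r-1}{2}}$ it is a genuine polynomial of degree $r$ in $x$ with integer coefficients, because $\deg h=(r-1)/2$. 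This is the same observation already used in the proof of Lemma~\ref{lemma:twist2rp}.

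The plan is to substitute $x\mapsto -x$ in the model of $C_{2,r}(a,b)$. Oddness gives $f(-x)=-f(x)$, so the equation becomes $y^2 = -f(x)+ca = -\bigl(f(x)-ca\bigr)$. Recall that the quadratic twist of a hyperelliptic curve $y^2=g(x)$ by $d$ is $dy^2=g(x)$, or equivalently $y^2=d\,g(x)$ after rescaling $y$, and it becomes isomorphic to the original curve over $\Q(\sqrt d)$. Applying this with $d=-1$, the curve $y^2 = -(f(x)-ca)$ is the twist by $-1$ of $y^2=f(x)-ca = f(x)+c(-a)$, which is exactly $C_{2,r}(-a,b)$.

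To make this explicit, we will exhibit the isomorphism over $\Q(i)$ given by $(x,y)\mapsto(-x,iy)$ from $C_{2,r}(a,b)$ to $C_{2,r}(-a,b)$. We then check that it transforms by the nontrivial character of $\Gal(\Q(i)/\Q)$: composing the isomorphism with its Galois conjugate yields the hyperelliptic involution. This cocycle condition is precisely what characterises the quadratic twist by $-1$.

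The degenerate case $b=0$ is handled in the same way, since $x^r$ is odd because $r$ is odd. There is no real obstacle. The only point needing care is a consistent normalisation of twists for odd-degree models, namely $y^2=g(x)$ versus $y^2=-g(-x)$, so that the sign of the constant term matches.
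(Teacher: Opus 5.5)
Your argument is correct and is essentially the paper's proof made explicit. The paper treats $b=0$ as clear and, for $b\neq 0$, cites the general twisting lemma \cite[Lemma 6.1]{computationconductor} with $\delta=-1$, $z=-ABb$, $s=2A^{\frac{r+1}{2}}B^{\frac{r-1}{2}}a$; this amounts to your substitution $x\mapsto -x$ using the oddness of the non-constant part, and your isomorphism $(x,y)\mapsto(-x,iy)$ with the cocycle check simply makes it self-contained.
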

\begin{proof}
    It is clear for $b=0$ and it follows from \cite[Lemma 6.1]{computationconductor} for $b\neq 0$ by taking $\delta = -1$, $z = -ABb$ and $s = 2A^{(r+1)/2}B^{(r-1)/2}a$.
\end{proof}

In the next lemma we show that  the hyperelliptic curve $C_{2,r}(a,b)$ is related to a specialization of the model $C_{2,r}(t)$ given in (\ref{eq:Freycurve-t}). Let $z_0$ be a fixed square root of $-ABb$. We specialize the variables $s$ and $t$ in (\ref{eq:relation-s-and-t}) to be $s=s_0$ and $t=t_0$, where
\begin{align}\label{eqn:t0s0}
t_0=\frac{Aa^2}{Aa^2+Bb^r} \quad \text{ and  } \quad s_0= \frac{2A^{\frac{r+1}{2}}B^{\frac{r-1}{2}}a}{z_0^r}.
\end{align}
Note that, if $ab \neq 0$ and $Aa^2+Bb^r \neq 0$, we have that $t_0\in\PP^1(\Q)\setminus\{0,1,\infty\}$ and that 
\begin{equation}
    t_0(t_0-1)=-\frac{Aa^2Bb^r}{(Aa^2+Bb^r)^{2}}.
\end{equation}

\begin{lemma}\label{lemma:quadtwist}
Assume that $ab\neq0$. Then   $C_{2,r}(t_0)$ is the quadratic twist of  $C_{2,r}(a,b)$ by $\pm\frac{b^\frac{r-1}{2}a}{Aa^2+Bb^r}$.
\end{lemma}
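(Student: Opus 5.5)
The plan is a direct change of variables. I will specialize the model \eqref{eq:Freycurve-t} of $C_{2,r}(t)$ at $t=t_0$ and rescale $x$ so that the polynomial becomes, up to a constant factor, the polynomial defining $C_{2,r}(\pm a,b)$ in \eqref{eqn:frey}. That constant factor will be the twisting parameter.

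Set $D:=Aa^2+Bb^r$, which is nonzero, and $u:=t_0(t_0-1)=-ABa^2b^r/D^2$. Since $ab\neq 0$, the point $t_0$ avoids $\{0,1,\infty\}$, so the specialization is a genuine hyperelliptic curve by \eqref{eq:disc-2rp}. Define
$$\gamma:=\frac{a\,b^{\frac{r-1}{2}}}{D}\in\Q^\times,$$
so that $-u=\gamma^2ABb$. Substituting $x=\gamma X$ gives
$$2-\frac{x^2}{u}=2+\frac{X^2}{ABb}.$$
The first term of \eqref{eq:Freycurve-t} then becomes
$$(-u)^{\frac{r-1}{2}}\gamma X\,h\Big(2+\frac{X^2}{ABb}\Big)=\gamma^r (ABb)^{\frac{r-1}{2}}X\,h\Big(2+\frac{X^2}{ABb}\Big).$$
This is $\gamma^r$ times the non-constant part of \eqref{eqn:frey}.

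For the constant term I substitute $t_0-1=-Bb^r/D$ and $t_0=Aa^2/D$ into $2(t_0-1)^{\frac{r-1}{2}}t_0^{\frac{r+1}{2}}$. This gives
$$2(-1)^{\frac{r-1}{2}}\frac{A^{\frac{r+1}{2}}B^{\frac{r-1}{2}}a^{r+1}b^{\frac{r(r-1)}{2}}}{D^r}=(-1)^{\frac{r-1}{2}}\gamma^r\cdot 2A^{\frac{r+1}{2}}B^{\frac{r-1}{2}}a,$$
where the equality uses $\gamma^r=a^r b^{r(r-1)/2}/D^r$. The right-hand side of the specialized curve is therefore $\gamma^r f_{\pm a}(X)$, where $f_{\pm a}$ is the polynomial of $C_{2,r}(\pm a,b)$ and the sign is $(-1)^{(r-1)/2}$. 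Finally I put $y=\gamma^{\frac{r-1}{2}}Y$, which is legitimate because $r$ is odd. This yields the model
$$Y^2=\gamma\, f_{\pm a}(X),$$
which is the quadratic twist of $C_{2,r}(\pm a,b)$ by $\gamma$.

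To finish, when the sign is $-1$ I invoke the previous lemma: $C_{2,r}(-a,b)$ is the twist of $C_{2,r}(a,b)$ by $-1$. Composing the two twists shows that $C_{2,r}(t_0)$ is the twist of $C_{2,r}(a,b)$ by $\pm\gamma$, which is the parameter in the statement. The only delicate point is the sign and exponent bookkeeping: the parity of $(r-1)/2$, pulling $(-u)^{(r-1)/2}$ out as $\gamma^{r-1}(ABb)^{(r-1)/2}$, and matching the powers $b^{r(r-1)/2}$ and $a^{r+1}$. This is exactly why the statement only asserts the twist up to $\pm$.
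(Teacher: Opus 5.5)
Your proof is correct; the identities $-u=\gamma^2ABb$, the constant term $(-1)^{\frac{r-1}{2}}\gamma^r\cdot 2A^{\frac{r+1}{2}}B^{\frac{r-1}{2}}a$, and the rescaling $y=\gamma^{\frac{r-1}{2}}Y$ all check out. It takes a different route from the paper.

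\textbf{The paper's route.} It passes through the specialization $C_r(s_0)$ of the signature $(p,p,r)$ curve, in three steps:
\begin{enumerate}
\item It shows that $C_{2,r}(a,b)$ is the twist of $C_r(s_0)$ by $z_0=\sqrt{-ABb}$, using $x\mapsto x/z_0$.
\item It checks that $(s_0,t_0)$ satisfy \eqref{eq:relation-s-and-t}, and then uses Lemma~\ref{lemma:twist2rp} to get that $C_{2,r}(t_0)$ is the twist of $C_r(s_0)$ by $\alpha=\sqrt{t_0(t_0-1)}$.
\item It composes the two twists, giving the twist by $\alpha/z_0$.
\end{enumerate}
Your substitution $x=\gamma X$ is exactly the composite of those two changes of variables, carried out in one step on the expanded model \eqref{eq:Freycurve-t}.

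\textbf{What your route buys.}
\begin{itemize}
\item Everything stays over $\Q$. You never introduce $z_0$ or $\alpha$, which in general lie in quadratic extensions; in the paper the intermediate twists live over $\Q(z_0)$, and only the ratio $\alpha/z_0$ is rational.
\item You do not need to verify the relations between $s_0$ and $t_0$.
\item You pin down the sign exactly as $(-1)^{\frac{r-1}{2}}$. This agrees with the paper: taking $\alpha=2t_0/s_0$ as in \eqref{eq:realtion-alpha}, one finds $\alpha/z_0=(-1)^{\frac{r-1}{2}}ab^{\frac{r-1}{2}}/(Aa^2+Bb^r)$.
\end{itemize}

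\textbf{What the paper's route buys.} It is more conceptual: it explains the twist relation by exhibiting both curves as twists of a common specialization of Darmon's curve. It also reuses Lemma~\ref{lemma:twist2rp} rather than redoing the algebra. Your argument depends on the expanded model \eqref{eq:Freycurve-t}, which the paper itself derives from that same lemma.
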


\begin{proof}
Note that $C_{2,r}(a,b)$ can be obtained by $C_r(s_0)$ by replacing $x$ by $x/z_0$ and $y$ by $y/z_0^{r/2}$, showing that $C_{2,r}(a,b)$ is the quadratic twist of $C_r(s_0)$ by $z_0$. Also, we may check that $t_0$ and $s_0$ as defined in \eqref{eqn:t0s0} satisfy \eqref{eq:relation-s-and-t} and, consequently, Lemma \ref{lemma:twist2rp} gives that $C_{2,r}(t_0)$ is the quadratic twist of $C_r(s_0)$ by $\alpha$, where $\alpha$ is the square-root of $t_0(t_0-1)$. Therefore, by composing both twists, we conclude that the curve $C_{2,r}(t_0)$  is the quadratic twist of  $C_{2,r}(a,b)$ by $\alpha/z_0=\pm b^\frac{r-1}{2}a/(Aa^2+Bb^r)$. 
\end{proof}

Denote by $J_{2,r}$  the base change of $J_{2,r}(a,b)$ to $K$. 
\begin{theorem}\label{thm:2rp-GL2-type}
Assume that $b\neq0$. Then $J_{2,r}$ is of $\GL_2(K)$-type. In particular, it gives rise  to a strictly compatible system of $K$-integral $\lambda$-adic representations
\begin{align*}
    \rho_{J_{2,r},\lambda}:G_K\to \GL_2(K_\lambda).
\end{align*}
\end{theorem}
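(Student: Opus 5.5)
We prove Theorem~\ref{thm:2rp-GL2-type} by transporting the real multiplication of $J(s_0)$ to $J_{2,r}$ through the chain of quadratic twists in Lemmas~\ref{lemma:twist2rp} and~\ref{lemma:quadtwist}. Quadratic twisting preserves the endomorphism algebra over the base field. The real multiplication will then give the compatible system.

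\textbf{Case $a\neq 0$.} Here $t_0\in\PP^1(\Q)\setminus\{0,1,\infty\}\subset\PP^1(K)\setminus\{0,1,\infty\}$, using $ab\neq0$ and $Aa^2+Bb^r\neq0$. By Proposition~\ref{prop:FreyRep}(1),(2) there is a well-defined embedding $K\hookrightarrow \End_{K}(J_{2,r}(t_0))\otimes\Q$. By Lemma~\ref{lemma:quadtwist}, $C_{2,r}(t_0)$ is the quadratic twist of $C_{2,r}(a,b)$ by $d=\pm b^{(r-1)/2}a/(Aa^2+Bb^r)\in\Q^\times$. Over $K(\sqrt d)$ the twisting isomorphism $\phi: J_{2,r}\to J_{2,r}(t_0)$ conjugates endomorphisms. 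Let $\sigma$ be the nontrivial automorphism of $K(\sqrt d)/K$. Then $\phi^\sigma=[-1]\circ\phi$, since $\phi$ is induced by $y\mapsto \sqrt d\, y$ and the hyperelliptic involution induces $[-1]$ on the Jacobian. Because $[-1]$ is central, for any $K$-rational endomorphism $e$ of $J_{2,r}(t_0)$ the endomorphism $\phi^{-1}e\phi$ is $\sigma$-invariant. It is therefore defined over $K$, and we obtain $K\hookrightarrow\End_K(J_{2,r})\otimes\Q$. Since $\dim J_{2,r}=(r-1)/2=[K:\Q]$, this shows $J_{2,r}$ is of $\GL_2(K)$-type.

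\textbf{Case $a=0$.} Then $t_0=0$, which is excluded by Proposition~\ref{prop:FreyRep}, so this case needs a separate argument. In this case $s_0=0$, and $C_{2,r}(0,b)$ is the twist by $z_0$ of $C_r(0)$. We handle it in the same way using \cite[Theorem 2.26]{BillereyChenDieulefaitFreitas25a} directly, which gives real multiplication by $K$ on $J(s_0)$ for every $s_0\in K$. Here $z_0^2=-ABb\in\Q$, so the twist argument above applies verbatim. In fact this route also works for $a\neq0$, provided $s_0\in K$. However, $s_0$ involves $z_0^r$, which need not lie in $K$. We therefore use the $t_0$ route when $a\neq0$ and the direct route when $a=0$, where $s_0=0\in K$.

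\textbf{The compatible system.} Once $J_{2,r}/K$ is of $\GL_2(K)$-type, the Tate modules $V_\ell(J_{2,r})$ are free of rank $2$ over $K\otimes\Q_\ell=\prod_{\lambda\mid\ell}K_\lambda$, by the standard argument of \cite[Section 11.10]{Shimura}. This yields representations $\rho_{J_{2,r},\lambda}:G_K\to\GL_2(K_\lambda)$. Strict compatibility follows from the fact that the characteristic polynomial of Frobenius on $V_\lambda$ has coefficients in $K$ independent of $\lambda$, as in the Ribet--Shimura theory. $K$-integrality follows by replacing the embedding with an order of $K$ acting by genuine endomorphisms, after passing to an isogenous variety if necessary.

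\textbf{Main obstacle.} The delicate step is verifying that the endomorphisms descend to $K$ through the twist, i.e.\ that the twisting cocycle is central. This holds because it takes values in $\{\pm1\}$. One must also check that the embedding of Proposition~\ref{prop:FreyRep} is defined over $K$ itself rather than over a larger extension.
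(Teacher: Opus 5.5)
For $a\neq 0$ your argument is the paper's own. Proposition~\ref{prop:FreyRep} gives the $K$-rational real multiplication on $J_{2,r}(t_0)$. Lemma~\ref{lemma:quadtwist} then transports it to $J_{2,r}$ through a quadratic twist by an element of $\Q^\times$, whose cocycle is the central automorphism $[-1]$. You are also right that this argument tacitly needs $ab\neq 0$: Lemma~\ref{lemma:quadtwist} assumes it, and $t_0=0$ is excluded in Proposition~\ref{prop:FreyRep}. So $a=0$ needs separate treatment, but your treatment of that case has a genuine gap.

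\textbf{The gap at $a=0$.} Coprimality forces $Bb=\pm1$, so $z_0^2=-ABb=\mp A$, and in general $z_0\notin K$. For instance $A=B=b=1$ gives $z_0=\pm i$, which never lies in the totally real field $K$. The isomorphism $\phi\colon C_{2,r}(0,b)\to C_r(0)$, $(x,y)\mapsto(x/z_0,\,y/z_0^{r/2})$, rescales the $x$-coordinate by $z_0$, so it is not a quadratic twist over $K$. For $\sigma\in G_K$ with $\sigma(z_0)=-z_0$ one computes $\phi^\sigma\circ\phi^{-1}=\iota^{\pm1}$, where $\iota(x,y)=(-x,iy)$ is the order-$4$ automorphism of $C_r(0)$ responsible for the CM by $\Q(i)$ noted in Section~\ref{sec:Freycurve}.

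Consequently:
\begin{itemize}
\item The fact that $z_0^2\in\Q$ does not make your first argument apply verbatim.
\item Your closing claim that the cocycle takes values in $\{\pm1\}$ is false in this case.
\item Centrality is not automatic either. $\End_{\overline{K}}(J(0))\otimes\Q$ contains both $K$ and $\Q(i)$ and need not be commutative; for $r=5$, $J(0)$ is geometrically isogenous to the square of an elliptic curve.
\end{itemize}

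\textbf{The missing step.} You must show that $\iota$ commutes with a $K$-rational real multiplication on $J(0)$. This does hold:
\begin{itemize}
\item Writing $x=u+u^{-1}$, one has $(-1)^{\frac{r-1}{2}}xh(2-x^2)=u^r+u^{-r}$.
\item The standard Tautz--Top--Verberkmoes real multiplication is induced by the $K$-rational correspondence $(u+u^{-1},y)\mapsto(\zeta_r u+\zeta_r^{-1}u^{-1},y)$.
\item $\iota$ is induced by $u\mapsto -u$, $y\mapsto iy$. This commutes with $u\mapsto \zeta_r u$ and $u\mapsto u^{-1}$, so $\iota\times\iota$ preserves the correspondence.
\end{itemize}
With this verification added, your descent argument goes through for $a=0$ as well. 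When $z_0\in K$ the twist is genuinely quadratic, and your original argument already suffices.
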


\begin{proof}
This follows from Proposition \ref{prop:FreyRep} and Lemma \ref{lemma:quadtwist}.
\end{proof}

\section{Modularity}\label{sec:modularity}

Keep the notation from previous sections. In particular, let $J_{2,r}$ be the base change of $J_{2,r}(a,b)$ to $K$ and let $J_{2,r}(t)$ be the Jacobian of $C_{2,r}(t)/K$. Recall that we will specialize at $t_0$ as given in \eqref{eqn:t0s0}. Finally, as in Section \ref{sec:notation},  let $\id{r}$ be the unique prime of $K$ above $r$.

From Theorem \ref{thm:2rp-GL2-type}, there is an strictly compatible system of 2-dimensional Galois representations $\{\rho_{J_{2,r},\lambda}:G_K\to \GL_2(K_\lambda)\}$ attached to $J_{2,r}$.

\begin{theorem}\label{thm:residualtwistC2}
The residual representation $\bar{\rho}_{J_{2,r},\id{r}}$ is isomorphic to a twist of $\bar{\rho}_{C_2(1/t_0),r}$, where $C_2(t)$ is Darmon's curve, given by  $C_2(t):y^2=x^3+2x^2+tx$.

Moreover, the representation $\bar{\rho}_{J_{2,r},\id{r}}$ extends to $G_\Q$.
\end{theorem}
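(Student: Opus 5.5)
The plan is to argue at the level of Frey representations over $K(t)$ and then specialize. The key input is Darmon's rigidity philosophy (\cite[Theorem 1.10]{DarmonDuke} and its refinement in \cite[Section 2]{BillereyChenDieulefaitFreitas25a}): in characteristic $\ell$ equal to one of the signature exponents, the Frey representation is controlled by a lower-dimensional object.

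\textbf{Step 1 (the $\id{r}$-residual representation of the Frey Jacobian).} By Proposition~\ref{prop:FreyRep}, $\bar{\rho}_{J_{2,r}(t),\id{r}}$ is a Frey representation of signature $(2,r,p)$ in characteristic $r$. Inertia at $t=1$ has projective order $r$ in characteristic $r$, so it is unipotent. For the $(p,p,r)$ family $C_r(s)$, Darmon shows that the reduction modulo $\id{r}$ is the $r$-torsion of a Legendre-type elliptic curve, up to a character. This follows from the degeneration of $h$ modulo $\id{r}$: we have $h(2-x^2)\equiv \pm(x^2-4)^{(r-1)/2}$ up to units, since $\omega\equiv 2 \pmod{\id{r}}$. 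Consequently $\bar{\rho}_{J(s),\id{r}}$ is, up to twist, $\bar{\rho}_{E(s),r}$ with $E(s)$ a Frey elliptic curve of signature $(r,r,\cdot)$ type. I would transport this through the change of variables \eqref{eq:relation-s-and-t} and the twist of Lemma~\ref{lemma:twist2rp}. The result is that $\bar{\rho}_{J_{2,r}(t),\id{r}}$ is a twist of $\bar{\rho}_{E(t),r}$ for an elliptic curve $E(t)/\Q(t)$ with multiplicative-type degeneration at $t=1$ and additive order-$2$ degeneration at $t=0$. I expect this elliptic curve to be Darmon's $C_2(1/t)$. Indeed, $C_2(u)$ has discriminant $64u^2(1-u)$ (up to constants), so $u=1/t$ places the degenerations at $t=\infty$, $t=1$ and $t=0$ with the correct orders $2$, $r$, $p$ after accounting for which points are ramified.

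\textbf{Step 2 (identification via rigidity).} Rather than compute modulo $\id{r}$ explicitly, one could argue more cleanly as follows. Both $\bar{\rho}_{J_{2,r}(t),\id{r}}$ and $\bar{\rho}_{C_2(1/t),r}\otimes\chi$ are Frey representations of signature $(2,r,p)$ with respect to $(0,1,\infty)$ in characteristic $r$. Here $C_2$ gives signature $(2,r,p)$ on $r$-torsion, since its multiplicative fibre at $u=1$ has $r$-th-power local behaviour only in the $r$-adic sense. One then invokes the uniqueness up to equivalence of such Frey representations over $\overline{K}(t)$ (Darmon's Theorem 1.3 in the form of \cite{BillereyChenDieulefaitFreitas25a}). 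This reduces the identification over $K(t)$ to a twist by a character of $G_{K(t)}$ trivial on $G_{\overline{K}(t)}$, hence a character of $G_K$ composed with a quadratic character in $t$. Specializing at $t_0$ via Lemma~\ref{lemma:quadtwist} then gives the first claim, with an explicit twist absorbing both quadratic twists. \emph{The main obstacle} is this step: checking that rigidity holds in characteristic $r$, where inertia at $t=1$ is wild and unipotent rather than semisimple. If rigidity fails there, I would fall back on the direct mod-$\id{r}$ congruence of the hyperelliptic model sketched in Step~1, comparing $r$-torsion via a degeneration map.

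\textbf{Step 3 (extension to $G_\Q$).} Since $C_2(1/t_0)$ is defined over $\Q$ (recall $t_0\in\Q$), the representation $\bar{\rho}_{C_2(1/t_0),r}$ is the restriction of a representation of $G_\Q$. It then suffices to show that the twisting character $\chi$ of $G_K$ extends to $G_\Q$. Because $\chi$ arises from a quadratic twist by an element of $\Q$ and from the character appearing in Darmon's construction, whose values lie in $\F_r^\times$ and which is $\Gal(K/\Q)$-invariant, $\chi$ extends. If needed, one can use that $K/\Q$ is cyclic, so every $\Gal(K/\Q)$-invariant character of $G_K$ extends to $G_\Q$. Twisting the extension of $\bar{\rho}_{C_2(1/t_0),r}$ by an extension of $\chi$ gives the desired extension of $\bar{\rho}_{J_{2,r},\id{r}}$.
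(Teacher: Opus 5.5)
\textbf{Verdict.} Your overall route is the paper's, but the proposal has a genuine gap at the decisive step, and the extension to $G_\Q$ rests on an unproved claim.

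\textbf{The identification step.} Your Step 2 does what the paper does: compare the $\id{r}$-member of the Frey family of Proposition~\ref{prop:FreyRep} with $\bar{\rho}_{C_2(1/t),r}$ by uniqueness of Frey representations, specialize at $t_0$, and absorb the twist of Lemma~\ref{lemma:quadtwist}. However, the step you call the main obstacle is the whole content of the theorem, you leave it open, and you misstate the signature.
\begin{itemize}
\item For the prime $\id{r}$, the order at $t=\infty$ is $r$, not $p$. So $\bar{\rho}_{J_{2,r}(t),\id{r}}$ has signature $(2,r,r)$.
\item $\bar{\rho}_{C_2(1/t),r}$ has the same signature. Indeed $C_2(u)$ has multiplicative fibres at $u=0$ and $u=1$ (discriminant $64u^2(1-u)$), giving unipotent inertia of order $r$ modulo $r$. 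It has $j=1728$ at $u=\infty$, giving projective order $2$. The substitution $u=1/t$ moves these points to $t=\infty,1,0$.
\item This coincidence in characteristic $r$ is what lets the paper invoke Darmon's classification \cite[Theorem 1.14 and \S1.3]{DarmonDuke}.
\end{itemize}

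Your worry about wild inertia is misplaced. The field $\overline{K}(t)$ has characteristic $0$, so the geometric monodromy is determined by the images of three inertia generators with product $1$. A triple of orders $(2,r,r)$ in $\PSL_2(\overline{\mathbb{F}}_r)$ with irreducible image is rigid. To see this, normalize $\sigma_1=\pm\bigl(\begin{smallmatrix}1&1\\0&1\end{smallmatrix}\bigr)$ and $\sigma_\infty=\pm\bigl(\begin{smallmatrix}1&0\\x&1\end{smallmatrix}\bigr)$; then order $2$ for $\sigma_1\sigma_\infty$ forces $2+x=0$.

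\textbf{The fallback in Step 1.} This cannot work. Reducing the equation modulo $\id{r}$ describes the special fibre at $\id{r}$, not the $G_{K(t)}$-module $J[\id{r}]$ of the generic fibre. The congruence is also wrong: $h(X)\equiv(X-2)^{(r-1)/2}$, so $xh(2-x^2)\equiv\pm x^r$. Your transport idea itself is viable. Over $K(s)$ one has $1/t=1-4/s^2$, and $C_2(1/t)$ is the quadratic twist by $-s$ of the Legendre curve with $\lambda=(2-s)/4$. But that route again needs Darmon's uniqueness (now for $(r,r,r)$) plus a descent from $K(s,t)$ to $K(t)$.

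\textbf{The extension to $G_\Q$ (Step 3).} The group-theoretic fact you use is correct: for cyclic $K/\Q$, a $\Gal(K/\Q)$-invariant character of $G_K$ extends to $G_\Q$. The problem is that the invariance of $\chi$ is only asserted.
\begin{itemize}
\item The twist from Lemma~\ref{lemma:quadtwist} is by an element of $\Q$, so that part is fine.
\item Nothing you say shows that the character produced by the equivalence over $K(t)$, specialized at $t_0$, is $\Gal(K/\Q)$-invariant. Having values in $\mathbb{F}_r^\times$ does not imply it.
\end{itemize}
The paper instead imports the argument of \cite[Theorem 4.1]{BillereyChenDieulefaitFreitas25a}.

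A self-contained repair that bypasses $\chi$ runs as follows. $J_{2,r}(a,b)$ is defined over $\Q$. Each $\tau\in G_\Q$ carries the real multiplication $[\alpha]$ to its Galois conjugate $[\tau(\alpha)]$. It also fixes $\id{r}$, the unique prime above $r$. Hence $J[\id{r}]$ is a $G_\Q$-stable subgroup. Since $\cO_K/\id{r}=\mathbb{F}_r$, it is a $2$-dimensional $\mathbb{F}_r$-representation of $G_\Q$ whose restriction to $G_K$ is $\bar{\rho}_{J_{2,r},\id{r}}$.

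Equivalently, you could run your Step 2 over $\Q(t)$ using this $G_{\Q(t)}$-module. The twisting character then specializes to a character of $G_\Q$ directly.
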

\begin{proof}
From Proposition \ref{prop:FreyRep}(3), and because we are taking $\id{p} = \id{r}$, we know that $\bar{\rho}_{J_{2,r}(t),\id{r}}$ is a Frey representation over $K$ of signature $(2,r,r)$  with respect to the points $(0,1,\infty)$. Writing $G_{\Q(t_0)}$ and $G_{K(t_0)}$ for $G_\Q$ and $G_K$ respectively to make clear that the actions involved are obtained by specialization, it follows from \cite[Theorem 1.14 and \S1.3] {DarmonDuke} (see also \cite[Example 4.8]{ChenKoutsianasSurvey}) and specialization at $t=t_0$ that, as representations of $G_{K(t_0)}$, we have
\begin{equation}\label{eq:congC2}
    \overline{\rho}_{J_{2,r}(t_0),\mathfrak{r}} \simeq \overline{\rho}_{C_2(1/t_0),r} \otimes \chi
\end{equation}
where $\chi : G_{K(t_0)} \to \overline{\F}_r^\times$ is a character of order at most $2$. We note that both $J_{2,r}(t_0)$ and $C_2(1/t_0)$ are non-singular as $t_0 \neq 0,1,\infty$.  
The same conclusion holds for $\overline{\rho}_{J_{2,r},\mathfrak{r}}$ by Lemma \ref{lemma:quadtwist} and the expression of the discriminant in \eqref{eq:disc2rp}, completing the proof of the first assertion.  

The second statement follows by the same argument that in the proof of \cite[Theorem 4.1]{BillereyChenDieulefaitFreitas25a}.
\end{proof}

\begin{theorem}\label{thm:residualIrred}
Assume that $r \geq 11$. In addition, suppose that:
\begin{enumerate}
    \item $Aa^2+Bb^r\neq \pm 1$,
    \item $(Aa^2, Bb^r)\notin\{(2^i\pm1,\mp 1) : 0\le i\le 6\}$.
\end{enumerate}
Then the representation $\overline{\rho}_{J_{2,r},\mathfrak{r}}$ is absolutely irreducible when restricted to $G_{\Q(\zeta_r)}$.
\end{theorem}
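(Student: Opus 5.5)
By Theorem \ref{thm:residualtwistC2}, $\overline{\rho}_{J_{2,r},\mathfrak{r}}$ is isomorphic, as a representation of $G_K$, to $\overline{\rho}_{E,r}\otimes\chi$ with $\chi$ a character and $E := C_2(1/t_0)$. Twisting by a character does not affect absolute irreducibility, so it suffices to show that $\overline{\rho}_{E,r}|_{G_{\Q(\zeta_r)}}$ is absolutely irreducible. The plan is to reduce this to the irreducibility of $\overline{\rho}_{E,r}$ over $\Q$ itself, and then to establish that irreducibility.

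\textbf{Step 1: a rational model of $E$.} First replace $E$ by a model over $\Q$. With $t_0 = Aa^2/(Aa^2+Bb^r)$ we have $1/t_0 = (Aa^2+Bb^r)/(Aa^2)$. Up to a quadratic twist and scaling, $E$ is the curve
\[
y^2 = x^3 + 2Aa\,x^2 + A(Aa^2+Bb^r)\,x ,
\]
which is essentially the Frey curve for signature $(2,r,r)$ attached to $Aa^2 + Bb^r = Cc^r$. It is defined over $\Q$ and has a rational $2$-torsion point.

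\textbf{Step 2: irreducibility over $\Q$.} Suppose $\overline{\rho}_{E,r}$ is reducible over $\Q$. Since $r \ge 11$ and $E$ already has a rational $2$-torsion point, $E$ would acquire a rational cyclic isogeny of degree $2r \ge 22$. By Mazur and Kenku's classification of rational isogenies, this forces $r \le 7$, a contradiction, except at finitely many $j$-invariants. Those exceptional $j$-invariants correspond to CM curves, which occur for $r \in \{11,19,43,67,163\}$ together with some sporadic cases. These finitely many $j$-values should be matched against the $j$-invariant of $E$, which is a rational function of $Aa^2/(Bb^r)$. Solving the resulting equations should produce exactly the excluded configurations: $Aa^2+Bb^r = \pm 1$, where $E$ has good reduction away from $2$, and the small cases $(2^i\pm1, \mp1)$. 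Curves with conductor a power of $2$ are finite in number and explicitly known, and in particular those with a rational $2$-torsion point; this is where the bound $i \le 6$ should come from. Once these cases are excluded, $\overline{\rho}_{E,r}$ is irreducible over $\Q$.

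\textbf{Step 3: from $G_\Q$ to $G_{\Q(\zeta_r)}$.} Suppose $\overline{\rho}_{E,r}$ is irreducible over $\Q$ but becomes reducible over $\Q(\zeta_r)$. Because $\Q(\zeta_r)/\Q$ is abelian, the restriction is then a sum of two characters, and the image of $\overline{\rho}_{E,r}$ over $\Q$ normalizes a split Cartan subgroup. If the restriction is not scalar, the projective image over $\Q$ would be dihedral with cyclic part contained in $\PP\overline{\rho}(G_{\Q(\zeta_r)})$; if it is scalar, the projective image would be cyclic and contained in $\Gal(\Q(\zeta_r)/\Q)$. The determinant is the cyclotomic character, so the projective image is inside $\mathrm{PGL}_2$ with the correct determinant behaviour. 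There are two ways to rule this out. One uses the local structure at $r$: $E$ has either good ordinary/supersingular or multiplicative reduction there, and the inertia image contains elements incompatible with being abelian over $\Q(\zeta_r)$ once $r \ge 11$. The other observes that reducibility over $\Q(\zeta_r)$ gives a rational point on $X_{\mathrm{split}}(r)$ or $X_0(r)$ over $\Q$; the split Cartan case is ruled out by Bilu, Parent and Rebolledo for $r \ge 11$, $r \neq 13$, and by Balakrishnan et al.\ for $r = 13$, again up to CM points. This points towards handling the whole problem through images of Galois: for $r \ge 11$ one shows that the image of $\overline{\rho}_{E,r}$ contains $\SL_2(\F_r)$, after which restricting to $G_{\Q(\zeta_r)}$ leaves the image still containing $\SL_2(\F_r)$, and irreducibility follows.

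\textbf{Main obstacle.} The hardest part is excluding the normalizer-of-nonsplit-Cartan case, which is not known in general. Here I would use the rational $2$-torsion point together with the multiplicative reduction at the primes dividing $Aa^2+Bb^r$: a potentially multiplicative prime $\ell \ne r$ gives an inertia element of order $r$, which is incompatible with a nonsplit Cartan normalizer unless $r$ divides the valuation. Hypothesis (1) guarantees that such a prime $\ell$ exists, and the remaining potentially-good cases reduce exactly to the finite list excluded in (2).
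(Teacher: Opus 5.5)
Your reduction to $\overline{\rho}_{E,r}|_{G_{\Q(\zeta_r)}}$ with $E=C_2(1/t_0)$ matches the paper. The argument breaks, however, at the case you flag as the main obstacle: the normalizer of a non-split Cartan.

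\textbf{The gap in the non-split Cartan case.} The Tate-curve argument gives an element of order $r$ in the image only at a prime $\ell\neq r$ with $r\nmid v_\ell(j(E))$. Hypothesis (1) does not supply such a prime. We have $j(E)=-2^6(Aa^2-3Bb^r)^3/\bigl((Aa^2+Bb^r)^2Bb^r\bigr)$ and $\gcd(Aa,Bb)=1$. Hence an odd prime $\ell\mid b$ with $\ell\nmid B$ has $v_\ell(j)=-r\,v_\ell(b)$, and an odd prime $\ell\mid Aa^2+Bb^r$ has $v_\ell(j)=-2v_\ell(Aa^2+Bb^r)$. For example, take $B=b=1$ and $Aa^2=5^r-1$. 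This satisfies (1) and (2), and $j(E)=-2^6(5^r-4)^3/5^{2r}$. The only prime with $v_\ell(j)<0$ is $\ell=5$, where $v_5(j)=-2r$. So no inertia element of order $r$ appears, and nothing in your argument excludes the non-split Cartan normalizer. Your closing claim that the remaining cases ``reduce exactly to the finite list excluded in (2)'' is therefore unsupported.

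\textbf{How the paper handles it.} The paper does not attempt this. It splits into non-CM and CM. In the non-CM case it cites Najman's Proposition 3.1, which gives $\overline{\rho}_{E,r}(G_\Q)=\GL_2(\mathbb{F}_r)$ for $r\ge 11$ for non-CM curves over $\Q$ with a rational $2$-torsion point. The image on $G_{\Q(\zeta_r)}$ is then $\SL_2(\mathbb{F}_r)$. Without such an input, or a genuinely new argument for the non-split Cartan case, your proof is incomplete.

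\textbf{Where hypotheses (1) and (2) actually enter.}
\begin{enumerate}
\item Step 2 has no exceptions to match. A rational $r$-isogeny combined with the rational $2$-isogeny would give a rational cyclic $2r$-isogeny. No $2r$ with $r\ge 11$ is on Kenku's list, so irreducibility over $\Q$ holds with no hypotheses at all. The exceptional $j$-invariants you invoke concern isogenies of prime degree, and curves of $2$-power conductor play no role.
\item In the paper, (1) and (2) are used only to rule out CM. Your proposal never treats CM separately, although CM curves are exactly those whose image can lie in a Cartan normalizer. A CM $j$-invariant is integral, and $j(E)\in\Z$ with $\gcd(Aa,Bb)=1$ forces $Bb^r\mid 2^6$; this is where $i\le 6$ comes from. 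Then $\gcd(Aa^2+Bb^r,Aa^2-3Bb^r)\mid 4$ forces either $Aa^2+Bb^r=\pm1$ or $(Aa^2,Bb^r)=(2^i\pm1,\mp1)$.
\end{enumerate}

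\textbf{Your local alternative at $r$.} This also fails as stated. $E$ can have additive reduction at $r$ (e.g.\ when $r\mid A$). Moreover, supersingular reduction puts inertia inside a non-split Cartan, so it could not exclude that case anyway.
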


\begin{proof}
From the proof of Theorem \ref{thm:residualtwistC2} we know that $\overline{\rho}_{J_{2,r},\mathfrak{r}}$ satisfies
\begin{equation}\label{eq:45}
    \overline{\rho}_{J_{2,r},\mathfrak{r}} \simeq \overline{\rho}_{C_2,r} \otimes \chi
\end{equation}
as $G_K$-representations, where $\chi : G_K \to \overline{\F}_r^\times$ is a character of order at most $2$ and $C_2=C_2(1/t_0)$. Note that $C_2/\Q$ is an elliptic curve (it is non-singular as $t_0(t_0-1)\neq 0$) with a $2$-torsion point over $\Q$ and $j$-invariant equal to $j(C_2)=2^6(4t_0-3)^3/(t_0-1)$.  

By \eqref{eq:45}, the representation $\overline{\rho}_{J_{2,r},\mathfrak{r}}|_{G_{\Q(\zeta_r)}}$ is absolutely irreducible if and only if $\overline{\rho}_{C_2,r}|_{G_{\Q(\zeta_r)}}$ is absolutely irreducible. Note that $\bar{\rho}_{C_2,r}(G_{\Q(\zeta_r)})=\bar{\rho}_{C_2,r}(G_\Q) \cap \SL_2(\F_r)$.

Firstly, suppose that $C_2$ does not have  CM. Then, by \cite[Proposition 3.1]{Najman}, we have that, if $r \ge 11$, $\bar{\rho}_{C_2, r}(G_\Q) = \GL_2(\F_r)$ and so $\bar{\rho}_{C_2,r}(G_{\Q(\zeta_r)})=\SL_2(\F_r)$. Thus, the representation is surjective and absolutely irreducible.

Secondly, suppose that $C_2$ has CM. In particular, its $j$-invariant must be integral, that is,
\begin{equation}
    j(C_2)=-2^6\cdot\frac{(Aa^2-3Bb^r)^3}{(Aa^2+Bb^r)^2Bb^r}\in\ZZ.
\end{equation}
Since $\gcd(Aa,Bb)=1$, $Bb^r\mid 2^6$, that is, $Bb^r=2^i$, for some $0\le i \le 6$. If $i>0$ then $Aa^2+Bb^r$ is odd and, since $\gcd(Aa^2+Bb^r, Aa^2-3Bb^r) \mid 4$, we get that $Aa^2+Bb^r=\pm 1$, which gives a contradiction to assumption (1). Otherwise, $Bb=\pm 1$ and so $Aa^2+Bb^r \mid 2^6(Aa^2-3Bb^r)$, meaning that $Aa^2\pm 1 \mid 2^6$, which gives a contradiction to assumption (2).
\end{proof}

\begin{remark} Suppose that $A=B=1$ and let $(a,b,c)$ be a solution to (\ref{eqn:main}). Then, since $r \ge 5$, by Catalan's conjecture (proven in \cite{MR2076124}), assumption (1) holds,  except for the trivial cases, i.e.\ when $ab=0$. 
\end{remark}

\begin{lemma}\label{lemma:irred-r=7}
Assume that $r=7$ and $(Aa^2,Bb^r)\notin\{(\pm 63, \pm 1), (\pm 63, \mp 64)\}$. Then the representation $\overline{\rho}_{J_{2,r},\mathfrak{r}}$ is absolutely irreducible when restricted to $G_{\Q(\zeta_7)}$.
\end{lemma}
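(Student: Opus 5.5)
The plan follows the proof of Theorem \ref{thm:residualIrred}, replacing the large-image input for $r\ge 11$ by what is known about $7$-adic images of elliptic curves over $\Q$ with a rational $2$-torsion point. By \eqref{eq:45}, $\overline{\rho}_{J_{2,7},\mathfrak{r}}|_{G_{\Q(\zeta_7)}}$ is absolutely irreducible if and only if $\overline{\rho}_{C_2,7}|_{G_{\Q(\zeta_7)}}$ is, where $C_2=C_2(1/t_0)$ has a rational $2$-torsion point and
\[
j(C_2)=-2^6\frac{(Aa^2-3Bb^7)^3}{(Aa^2+Bb^7)^2Bb^7}.
\]
Since $\bar{\rho}_{C_2,7}(G_{\Q(\zeta_7)})=\bar{\rho}_{C_2,7}(G_\Q)\cap\SL_2(\F_7)$, we must rule out the cases where this intersection acts reducibly over $\overline{\F}_7$. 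There are three: the image of $G_\Q$ lies in a Borel subgroup, in the normalizer of a split Cartan, or in the normalizer of a nonsplit Cartan.

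\textbf{Reducible over $\Q$.} Here $C_2$ would have a rational $7$-isogeny together with a rational $2$-torsion point, giving a rational $14$-isogeny. Since $X_0(14)$ has genus one and only finitely many rational points, this leaves only the two CM $j$-invariants $-3^3 5^3$ and $3^3 5^3 17^3$. I will equate these with the formula for $j(C_2)$ and use $\gcd(Aa,Bb)=1$, as in the integrality argument of Theorem \ref{thm:residualIrred}. The expected outcome is either no admissible solutions or exactly the excluded pairs; an explicit check will be needed.

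\textbf{Normalizer of a Cartan, or CM.} A $2$-torsion point together with image in the normalizer of a split Cartan mod $7$ gives a rational point on a fibre product of $X_0(2)$ with $X_{\mathrm{sp}}^+(7)$. For the nonsplit case the analogous curve is $X_0(2)\times_{X(1)}X_{\mathrm{ns}}^+(7)$. I expect that either the known classification of $7$-adic images (Zywina, or Rouse--Sutherland--Zureick-Brown) or a direct computation of the rational points on these low-genus curves leaves only finitely many $j$-invariants, mostly CM ones. The CM cases are handled exactly as in Theorem \ref{thm:residualIrred}. Integrality of $j$ forces $Bb^7=\pm 2^i$, hence $Bb^7\in\{\pm1,\pm 64\}$ because $b^7\mid 2^6$. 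Divisibility then forces $Aa^2+Bb^7\mid 2^6\cdot 4^3$. Solving, I expect only $Aa^2=\pm 63$ with $Bb^7=\pm1$ or $\mp 64$, which are precisely the excluded pairs; the $t_0$ giving $j\in\{1728,\,2^4 3^3 5^3,\dots\}$ should match these.

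\textbf{Exceptional images.} There is also a subtlety: even when the image of $G_\Q$ is irreducible, its intersection with $\SL_2(\F_7)$ could in principle be reducible, for example when the image is contained in a Cartan normalizer with an index-two phenomenon. For $r=7$ one must check exceptional images such as the $S_4$ case; it does not occur over $\Q$ for $r=7$ since $7\equiv -1\pmod 8$ allows it, so it needs care.

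I expect the main obstacle to be the Cartan-normalizer cases: identifying the exact modular curves, including the level-$14$ fibre products, and showing that their rational points give no non-CM $j$-invariants. For this I would first try the Zywina classification of $\bar\rho_{E,7}(G_\Q)$ combined with the constraint of a rational $2$-torsion point, and reduce everything to finitely many $j$-values to test against the formula for $j(C_2)$.
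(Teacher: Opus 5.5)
\textbf{There is a genuine gap: your plan for the Cartan-normalizer case does not work.}

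Your Borel case is the paper's argument. A rational $7$-isogeny together with the rational $2$-torsion point gives a non-cuspidal rational point on $X_0(14)$. Then $64(4t_0-3)^3/(t_0-1)\in\{-3375,\,16581375\}$ forces $t_0\in\{63/64,\,-63\}$, which gives exactly the excluded pairs.

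The problem is what comes next. Having image in the normalizer of a Cartan subgroup is not by itself an obstruction. Your plan to dispose of that case by showing the relevant points are CM points coming from the excluded pairs is false, because the CM members of the family are not confined to those pairs. For instance:
\begin{itemize}
\item $j(C_2)=1728$ for $t_0=9/8$, i.e.\ $(Aa^2,Bb^7)=(\pm9,\mp1)$;
\item $j(C_2)=0$ for $t_0=3/4$, i.e.\ $(\pm3,\pm1)$;
\item $j(C_2)=8000$ for $t_0=2$, i.e.\ $(\pm2,\mp1)$;
\item $j(C_2)=2^4\cdot 3^3\cdot 5^3$ for $t_0=-3$, i.e.\ $(\mp3,\pm4)$.
\end{itemize}
These pairs are allowed by the hypothesis. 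Since $7$ is unramified in $\Q(i)$, $\Q(\sqrt{-3})$ and $\Q(\sqrt{-2})$, their mod $7$ images really do lie in Cartan normalizers. Yet the lemma asserts, correctly, that their restrictions to $G_{\Q(\zeta_7)}$ are absolutely irreducible.

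So the argument of Theorem~\ref{thm:residualIrred}, which removes CM by contradicting the hypotheses, is not available here: the lemma only excludes the pairs with CM by $\Q(\sqrt{-7})$. Your case division gives no way to conclude for these curves. Nor does it handle whatever non-CM rational points might lie on $X_0(2)\times_{X(1)}X^+_{\mathrm{ns}}(7)$ and $X_0(2)\times_{X(1)}X^+_{\mathrm{sp}}(7)$. These are curves of genus $2$ and $3$, whose rational points you only expect to be CM. Separately, integrality of $j$ gives $b=\pm1$ and $Bb^7=\pm 2^i$ with $0\le i\le 6$, not $Bb^7\in\{\pm1,\pm64\}$.

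\textbf{The missing idea is to pin down the bad case exactly.} Suppose $H=\bar\rho_{C_2,7}(G_\Q)$ is irreducible but $H\cap\SL_2(\mathbb{F}_7)=\bar\rho_{C_2,7}(G_{\Q(\zeta_7)})$ is not absolutely irreducible. Then:
\begin{itemize}
\item $H\cap\SL_2(\mathbb{F}_7)$ is a non-scalar abelian group of order prime to $7$, contained in a Cartan subgroup $C$ with $H\subseteq N(C)$;
\item the quadratic character $G_\Q\to N(C)/C$ is non-trivial but trivial on $G_{\Q(\zeta_7)}$;
\item hence it is the character of $\Q(\sqrt{-7})$, and $\bar\rho_{C_2,7}\simeq\operatorname{Ind}_{\Q(\sqrt{-7})}^{\Q}\psi$.
\end{itemize}
For the CM curves above, the character attached to the Cartan normalizer is that of the CM field, which is why they cause no trouble. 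Exceptional images cause none either: then $\mathbb{P}(H\cap\SL_2(\mathbb{F}_7))\supseteq V_4$, so $H\cap\SL_2(\mathbb{F}_7)$ is non-abelian of order prime to $7$.

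Being induced from a quadratic field ramified at $7$ is a strong condition at $7$, and this is how the paper concludes. By \cite[Lemmas 1.13 and 1.14]{DieulefaitPacetti} such a representation has Serre weight $4$ or $5$, whereas semistability of $C_2$ at $7$ gives weight $2$ or $8$. This avoids any rational-point computation beyond $X_0(14)$ and makes the CM/non-CM distinction irrelevant.

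If you follow this route, note one caveat. Semistability up to quadratic twist suffices, since the property is twist-invariant, but it is automatic only when $7\nmid A$. If $7\mid A$ then $v_7(\Delta_{\min}(C_2))\equiv 3\pmod{6}$, and that case needs a separate local argument.
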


\begin{proof} 
We use again the isomorphism (\ref{eq:congC2}). Suppose $\bar{\rho}_{C_2,7}$ is reducible. Since $C_2$ has a 2-torsion point, it gives rise to a non-cuspidal rational point on the curve $X_0(14)$, which has only two non-cuspidal points, corresponding to the $j$-invariants $-3375$ and $16581375$. These give rise to the values $t_0=63/64$ and $t_0=-63$. Since $t_0=Aa^2/(Aa^2+Bb^r)$, this gives $(Aa^2, Bb^r) \in \{(\pm 63, \pm1), (\pm 63, \mp 64)\}$, thereby contradicting the hypothesis in the statement of the lemma. Consequently, $\overline{\rho}_{C_2,7}$ is irreducible over $\Q$. Now, let $k$ denote its Serre's weight. Since $C_2$ is semistable at $7$, either $k=2$ or $k=8$. If $\bar{\rho}_{C_2,7}$ restricted to $G_{\Q(\zeta_7)}$ were not absolutely irreducible, \cite[Lemmas 1.13 and 1.14]{DieulefaitPacetti} gives $k=4$ or $k=5$, giving a contradiction.
\end{proof}

\begin{theorem} \label{thm:modularity} 
Let $r\neq 5$ and let $(a,b)$ satisfying the following:
\begin{enumerate}
    \item If $r=7$, assume that $Aa^2+Bb^r \not \in \{(\pm 63, \pm1), (\pm 63, \mp 64)\}$,
    \item If $r\ge 11$, assume that $Aa^2+Bb^r \neq \pm 1$ and $(Aa^2, Bb^r)\notin\{(2^i\pm1,\mp 1) : 0\le i\le 6\}$.
\end{enumerate}
 Then the abelian variety $J_{2,r}=J_{2,r}(a,b)/K$ is modular.
\end{theorem}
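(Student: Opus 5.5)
We will follow the strategy of \cite[Theorem 4.2]{BillereyChenDieulefaitFreitas25a}: establish modularity through the prime $\id{r}$, using a modularity lifting theorem for totally real fields. Since $J_{2,r}$ is of $\GL_2(K)$-type by Theorem \ref{thm:2rp-GL2-type}, it suffices to show that one member of the strictly compatible system, namely $\rho_{J_{2,r},\id{r}}:G_K\to\GL_2(K_{\id{r}})$, is modular. Modularity of one $\lambda$-adic member then propagates to the whole system, and hence to $J_{2,r}$ (a Hilbert newform of parallel weight $2$ over $K$).

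The first step is residual modularity. By Theorem \ref{thm:residualtwistC2}, $\bar{\rho}_{J_{2,r},\id{r}}$ is isomorphic over $G_K$ to $\bar{\rho}_{C_2,r}\otimes\chi$, where $C_2=C_2(1/t_0)$ is an elliptic curve over $\Q$ and $\chi$ is a character of order at most $2$. Since $C_2/\Q$ is modular (Wiles, Breuil--Conrad--Diamond--Taylor), $\bar{\rho}_{C_2,r}$ arises from a classical weight $2$ newform. Cyclic base change (Langlands) along the abelian extension $K/\Q$ then shows that $\bar{\rho}_{C_2,r}|_{G_K}$ is modular. Twisting by $\chi$ preserves modularity, so $\bar{\rho}_{J_{2,r},\id{r}}$ is modular.

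The second step checks the hypotheses of a lifting theorem. We use, for instance, the Breuil--Diamond / Kisin style theorem for potentially Barsotti--Tate representations at $r$, in the form used in \cite{BillereyChenDieulefaitFreitas25a} (e.g.\ Khare--Thorne or Skinner--Wiles type results for $p=r$ odd).
\begin{itemize}
\item $\rho_{J_{2,r},\id{r}}$ comes from an abelian variety, so it is de Rham with Hodge--Tate weights $\{0,1\}$ and is potentially Barsotti--Tate at $\id{r}$. It is also odd, since $K$ is totally real and the determinant is the cyclotomic character.
\item The key residual hypothesis is the Taylor--Wiles condition: $\bar{\rho}_{J_{2,r},\id{r}}|_{G_{K(\zeta_r)}}=\bar{\rho}_{J_{2,r},\id{r}}|_{G_{\Q(\zeta_r)}}$ must be absolutely irreducible. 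This is exactly provided by Theorem \ref{thm:residualIrred} for $r\ge 11$ under assumption (2), and by Lemma \ref{lemma:irred-r=7} for $r=7$ under assumption (1).
\item The exclusion $r\neq5$ reflects the usual restriction $r\ge 7$ (or the $\PSL_2(\F_5)$ exception) in such lifting theorems.
\end{itemize}

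The main obstacle is the local condition at $\id{r}$ demanded by the lifting theorem. Depending on the version used, one may need $\rho|_{G_{K_{\id r}}}$ and the modular lift to lie on the same component of the local deformation ring, or ordinarity, or potential Barsotti--Tate of a specified type. We would first try Khare--Thorne (\emph{Potential automorphy and the Leopoldt conjecture}) or Kisin's theorem for potentially Barsotti--Tate representations, which only require irreducibility over $G_{K(\zeta_r)}$ and $r$-adic Hodge--Tate weights $\{0,1\}$, with no component matching. Failing that, we would note that $K_{\id r}/\Q_r$ is totally ramified of degree $(r-1)/2$. Then we would compare the semistable or potentially good reduction types of $J_{2,r}$ and $C_2$ at $\id r$ via the congruence \eqref{eq:congC2}, choosing a lift (a base change of the $C_2$ form twisted by $\chi$) whose local type matches.
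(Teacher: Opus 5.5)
Your proposal is correct and follows the paper's proof. The steps are residual modularity of $\bar{\rho}_{J_{2,r},\mathfrak{r}}$, then Khare--Thorne's lifting theorem, then passage from one $\lambda$-adic member to $J_{2,r}$ (the paper cites Ribet for the last step). Khare--Thorne needs only the de Rham condition with Hodge--Tate weights $\{0,1\}$, so no matching of local types. The irreducibility over $G_{\Q(\zeta_r)}$ comes from Theorem~\ref{thm:residualIrred} and Lemma~\ref{lemma:irred-r=7}.

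The only variation is how you get residual modularity. You use modularity of the elliptic curve $C_2(1/t_0)/\Q$, cyclic base change to $K$, and a quadratic twist. The paper instead applies Serre's conjecture to the odd, absolutely irreducible extension of $\bar{\rho}_{J_{2,r},\mathfrak{r}}$ to $G_\Q$. Both are valid, and yours does not need that extension.

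Two small points. First, you omit the case $r=3$, which the paper treats separately: there $K=\Q$ and $J_{2,3}$ is an elliptic curve over $\Q$, modular by Wiles and Taylor--Wiles. Second, you claim $\rho_{J_{2,r},\mathfrak{r}}$ is potentially Barsotti--Tate at $\mathfrak{r}$. The paper only establishes potentially good reduction there when $r\nmid Aa^2+Bb^r$ (Lemma~\ref{lemma:pot-good-red-at-r}). This is harmless for Khare--Thorne, but it means the Kisin-type fallback you mention is not available in general.
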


\begin{proof}
For $r=3$ we have $K=\Q$  and $J_{2,r}$ is the rational elliptic curve given in \cite[p.\ 530]{Darmon-Granville}, and so it is modular by the modularity theorem \cite{Wiles, Taylor-Wiles}. We now assume $r \ge 7$. By Theorem \ref{thm:residualtwistC2} the representation $\overline{\rho}_{J_{2,r},\id{r}}:G_K \to \GL_2(\F_r)$ extends to an odd representation $\bar{\rho}$ of $G_\Q$ which is absolutely irreducible as a consequence of Theorem \ref{thm:residualIrred} for $r\ge11$ and Lemma \ref{lemma:irred-r=7} for $r=7$. From Serre's Conjecture, the representation $\bar{\rho}$ is modular, hence $\bar{\rho}_{J_{2,r},\id{r}}$  is also modular by cyclic base change. Modularity of $\rho_{J_{2,r},\id{r}}$ now follows from \cite[Theorem 1.1]{KhareThorne}, by checking its three conditions:
\begin{enumerate}
    \item $\rho_{J_{2,r},\id{r}}$ is  unramified outside $\Delta(C_{2,r})$.
    \item The abelian variety $J_{2,r}$ is  potentially semistable, so $\rho_{J_{2,r},\id{r}}$ is de Rham and their  Hodge--Tate  weights are $\{0,1\}$.
    \item The representation $\rho_{J_{2,r},\id{r}}|_{G_{\Q(\zeta_r)}}$ is  absolutely irreducible from Theorem \ref{thm:residualIrred} for $r\ge11$ and from Lemma \ref{lemma:irred-r=7} for $r=7$.
\end{enumerate}
    
Since $\rho_{J_{2,r},\id{r}}$ is modular, then the statement follows by \cite[Theorem (1.3.1)]{RibetGalois}.
\end{proof}

For $r=5$ we cannot prove a modularity result in general. However, in Section \ref{sec:application} we will be able to prove modularity under some conditions. 

\section{\texorpdfstring{The conductor of $J_{2,r}$}{The conductor of the Jacobian}}
\label{Sec:conductor}
In this section, we compute the conductor of the compatible system of Galois representations $\{\rho_{J_{2,r},\lambda}\}$ in as much generality as possible.
 In order to simplify the exposition, we will leave the proof of some technical results involving cluster pictures for Section \ref{sec:odd-cond-general} (which can be ommitted without loss of continuity), but refer and use them in this section. For this purpose, let us consider the curve
\begin{equation}\label{eqn:generalfrey}
    C(z,s) : y^2 = F(x) = (-z)^{\frac{r-1}{2}} x h\left(2-\frac{x^2}{z}\right) +s,
\end{equation}
studied in \cite{computationconductor}. Note that the curve $C_{2,r}(a,b)$ defined in \eqref{eqn:frey} is a particular instance of $C(z,s)$ with $z = -ABb$ and $s = 2A^{(r+1)/2}B^{(r-1)/2}a$.

Following the notation in \cite[Notation 2.3]{computationconductor}, we define $\Delta=s^2-4z^r$, so for the curve $C_{2,r}(a,b)$,
\begin{equation}
    \Delta=4A^rB^{r-1}(Aa^2+Bb^r).
\end{equation}
From \cite[Lemma 3.1]{computationconductor}, the discriminant of $C(z,s)$ is given by 
\begin{equation}
    \Delta_{C(z,s)}= (-1)^{\frac{r-1}{2}} 2^{2(r-1)} r^r \Delta^{\frac{r-1}{2}},
\end{equation}
which is consistent with \eqref{eq:disc2rp}. To ease the notation, throughout this section we will write $C_{2,r}$ to denote the curve $C_{2,r}(a,b)$, defined over $K$. Since we are interested in Diophantine applications to equations of the form (\ref{eqn:main}), over this section we will assume that $A$ is squarefree and that $r\ge 5$ (since the case $r=3$ is well understood, see \cite[Chapter 5]{CarlosPhdThesis}).

We now provide some results characterizing the reduction type of $J_{2,r}$ over different primes of $K$. Given a rational prime $q$, let $\id{q}$ be a prime in $K$ above $q$.

\begin{prop} \label{prop:good-red-outside-2r}
If $q\nmid 2rAB(Aa^2+Bb^r)$ then $J_{2,r}$ has good reduction at $\id{q}$.
\end{prop}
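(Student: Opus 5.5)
The plan is to read off good reduction directly from the discriminant of the model \eqref{eqn:frey}, which is integral. First consider $b\neq 0$. Since $h(x)\in\Z[x]$ is monic of degree $(r-1)/2$, the polynomial
\[
F(x)=(ABb)^{\frac{r-1}{2}}\,x\,h\!\left(2+\frac{x^2}{ABb}\right)+2A^{\frac{r+1}{2}}B^{\frac{r-1}{2}}a
\]
has integer coefficients. Indeed, each term $(ABb)^{\frac{r-1}{2}}(2+x^2/(ABb))^k$ with $k\le (r-1)/2$ is a polynomial in $x$ with integer coefficients. Its leading term is $x^r$, so $F$ is monic of odd degree $r$. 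By \eqref{eq:disc2rp}, the discriminant of this model is
\[
\Delta(C_{2,r})=(-1)^{\frac{r-1}{2}}2^{3(r-1)}r^rA^{\frac{r(r-1)}{2}}B^{\frac{(r-1)^2}{2}}(Aa^2+Bb^r)^{\frac{r-1}{2}}.
\]
This is an integer whose prime divisors all divide $2rAB(Aa^2+Bb^r)$.

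Now let $q\nmid 2rAB(Aa^2+Bb^r)$ and let $\id{q}\mid q$ in $K$. Then $v_{\id{q}}(\Delta(C_{2,r}))=0$. Since $q$ is odd, the model $y^2=F(x)$ with $F\in\cO_K[x]$ monic and $\bar F$ separable modulo $\id{q}$ reduces to a smooth hyperelliptic curve of the same genus $(r-1)/2$ over the residue field. The single point at infinity is smooth because $\deg F$ is odd. So $C_{2,r}$ has good reduction at $\id{q}$. The Jacobian of a curve with good reduction has good reduction, via the Néron model/relative Picard scheme of the smooth proper model. Hence $J_{2,r}$ has good reduction at $\id{q}$.

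For $b=0$ the curve is $y^2=x^r+2A^{\frac{r+1}{2}}B^{\frac{r-1}{2}}a$. Its discriminant is, up to sign, $r^r\big(2A^{\frac{r+1}{2}}B^{\frac{r-1}{2}}a\big)^{r-1}$. Here $Aa^2+Bb^r=Aa^2$, so every prime dividing this discriminant divides $2rAB(Aa^2+Bb^r)$, and the same argument applies.

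The only point needing care is the integrality of $F$ and the claim that the discriminant formula holds for this specific integral model rather than up to a twist. I would check this by citing \cite[Lemma 3.1]{computationconductor} with $z=-ABb$ and $s=2A^{(r+1)/2}B^{(r-1)/2}a$. That lemma gives the discriminant $2^{2(r-1)}r^r\Delta^{(r-1)/2}$ up to sign, with $\Delta=4A^rB^{r-1}(Aa^2+Bb^r)$, which agrees with \eqref{eq:disc2rp}. Everything else is the standard smooth-reduction criterion for odd-degree hyperelliptic models at odd primes.
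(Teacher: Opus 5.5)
Your proposal is correct and is essentially the paper's argument: since $q$ is odd and does not divide the discriminant \eqref{eq:disc2rp} of the integral odd-degree model, $C_{2,r}$ has good reduction at $\id{q}$, and therefore so does its Jacobian. The paper leaves implicit the checks you add (integrality and monicity of $F$, smoothness at the point at infinity, and the separate $b=0$ case), so these are extra care rather than a different route.
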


\begin{proof}
If $q\nmid 2rAB(Aa^2+Bb^r)$ then $q$ does not divide the discriminant of $C_{2,r}$, given in (\ref{eq:disc2rp}). Then  the curve has good reduction at $\id{q}$, and so does its Jacobian.
\end{proof}

\begin{prop} \label{prop:mult-red-outside-2r}
If $q\nmid 2rAB$ and  $q\mid Aa^2 +Bb^r$ then both $C_{2,r}$ and $J_{2,r}$ have bad multiplicative reduction at $\id{q}$.
\end{prop}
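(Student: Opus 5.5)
The plan is to read off the reduction type at $\id{q}$ from the cluster picture of $F(x)$ over $K_{\id{q}}$, using the explicit factorisation of $F$ behind the discriminant formula of \cite[Lemma 3.1]{computationconductor}.

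\textbf{Setup.} Since $q\nmid 2rAB$, the prime $q$ divides neither $z=-ABb$ nor the leading coefficient, so $C_{2,r}$ is given by a model integral at $\id{q}$ with unit leading coefficient. We have $\Delta = 4A^rB^{r-1}(Aa^2+Bb^r)$. The coprimality $\gcd(Aa,Bb)=1$ together with $q\mid Aa^2+Bb^r$ and $q\nmid AB$ gives $q\nmid ab$, hence $q\nmid z$. By the discriminant formula, $v_{\id{q}}(\Delta_{C_{2,r}})=\frac{r-1}{2}v_{\id{q}}(\Delta)>0$, so the reduction is bad.

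\textbf{Reducing the polynomial.} Modulo $\id{q}$ we have $s^2\equiv 4z^r$, so $s\equiv \pm 2 z_0^r$ for a square root $z_0$ of $-z$ over $\overline{\F}_q$. The substitution $x\mapsto z_0x$ of Lemma \ref{lemma:quadtwist} rescales $F$ into $z_0^r\bigl(xh(2+x^2)\cdot(\text{sign}) + s/z_0^r\bigr)$. For the Chebyshev-type polynomial $P(x)=xh(2+x^2)$ (up to sign), the equation $P(x)=\pm2$ is exactly the degenerate fibre of Darmon's family: it factors as $(x\mp c)\,G(x)^2$ with $G$ of degree $(r-1)/2$ and $G(\pm c)G'$-separable. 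Concretely, $x^r$-type Chebyshev polynomials satisfy $T_r(u)\pm1=(u\pm1)\,U(u)^2$ with $U$ separable and prime to $u\pm1$. So $\overline{F}$ has one simple root and $(r-1)/2$ distinct double roots.

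\textbf{Cluster picture and conclusion.} Lifting back to $K_{\id{q}}$, the roots of $F$ split into one singleton and $(r-1)/2$ twins $\mathfrak{t}_i$, each of depth $d_i>0$, all pairwise at distance $0$ (the reduced roots are distinct). Each twin contributes a node of the special fibre, and all proper clusters are twins. By the semistability criterion \cite{BBB}, $C_{2,r}$ is semistable at $\id{q}$: the extension over which roots are defined is tame, since $q\nmid 2r$. The toric rank of $J_{2,r}$ equals the number of even non-übereven clusters other than the top one, namely $(r-1)/2=g$. The genus of the normalised special fibre is $0$, because the top cluster has size $r$ with $(r-1)/2$ children that are twins plus one singleton, so its contribution is $\lfloor (1+(r-1)/2 -1)/2\rfloor$ minus the twin contributions, which is $0$. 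Hence the abelian part vanishes and $J_{2,r}$ has purely toric, hence multiplicative, reduction, while the curve itself has a bad (nodal) reduction of multiplicative type. If the roots are only defined over a ramified extension, one shows that $d_i=\tfrac12 v_{\id{q}}(\Delta)$ is already accounted for, and that semistability holds over $K_{\id{q}}$ itself because of the criterion of \cite{BBB} on the $\id{q}$-adic valuations.

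\textbf{Main obstacle.} The step I expect to be hardest is the factorisation of $\overline{F}$ into one simple root times a square of a separable polynomial coprime to it. I would prove it from the identity $h$ coming from $\zeta_r$: writing $x=z_0(u-u^{-1})$-type substitutions turns $F$ into $z_0^r(u^r\pm u^{-r})+s$, and $s=\pm2z_0^r$ gives $(u^r\mp1)^2/u^r$. This exhibits the double roots as images of the $r$-th roots of unity $\neq1$, paired by $u\mapsto u^{-1}$, which are distinct modulo $q$ because $q\ne r$.
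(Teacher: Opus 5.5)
Your argument is correct in substance, but it takes a different route from the paper. The paper does no computation here. It notes that $q$ is odd with $q\nmid rs$ (using $q\nmid a$ from coprimality) and that $q\nmid A$, so hypotheses (i) and (ii) of \cite{computationconductor} hold at $q$, and then it quotes \cite[Theorem 5.4]{computationconductor}. You instead read the reduction type directly off $\bar F$: one simple root and $g=\frac{r-1}{2}$ distinct double roots. This gives a cluster picture consisting of $\mathcal{R}$ (depth $0$) containing one singleton and $g$ twins of depth $\frac12 v_{\id{q}}(\Delta)$, hence toric rank $g$ and abelian rank $0$. Your version is self-contained, does not depend on the hypotheses of the earlier paper, and makes the geometry visible: a rational curve with $g$ nodes. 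The paper's citation is shorter and stays inside the framework of \cite{computationconductor}, which also supplies the conductor exponents used later.

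One step needs a different justification. Tameness of the splitting field is not what the semistability criterion of \cite{BBB} asks for; tame ramification of degree $3$ already destroys semistability. Your closing sentence about ramified extensions is only an assertion. What you actually need is the following three facts.
\begin{enumerate}
\item $e_{L/K_{\id{q}}}\le 2$. This holds because $L\subseteq K_{\id{q}}(\zeta_r,\alpha_0)$ and $\alpha_0^r=-(s+\sqrt{\Delta})/2$ is a unit, so only $\sqrt{\Delta}$ can ramify.
\item Every proper cluster is inertia-stable. Each twin is the set of roots reducing to a given double root of $\bar F$, and inertia acts trivially on the residue field.
\item The only principal cluster (for $r\ge 5$) is $\mathcal{R}$, with $d_{\mathcal{R}}=0$ and $\nu_{\mathcal{R}}=0$. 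The half-integral twin depths are irrelevant, since twins are not principal.
\end{enumerate}

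There is also a more elementary argument. Since $q$ is odd and $\bar F$ has only simple and double roots, the special fibre $y^2=\bar F(x)$ of the given model is reduced with only ordinary double points. So that model is already semistable over $\cO_{K_{\id{q}}}$. Its dual graph is one vertex with $g$ loops, and its normalisation $w^2=x-\bar\gamma$, with $\bar\gamma$ the simple root, has genus $0$.

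Two smaller slips. First, your normalisation has a sign error: with $z_0^2=-z$ one gets $(s/z_0^r)^2\equiv -4$, not $4$. The clean form is $F(\alpha+z/\alpha)=\alpha^r+z^r\alpha^{-r}+s$, which equals $(\alpha^r-w)^2/\alpha^r$ when $s=-2w$ with $w^2=z^r$; your conclusion about $\bar F$ is unaffected. Second, your genus count for $\mathcal{R}$ is garbled. The right reason is that $\mathcal{R}$ has exactly one odd child, and in any case toric rank $g=\dim J_{2,r}$ already forces the abelian part to vanish.
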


\begin{proof}
By hypothesis, $q$ is an odd prime such that $q\nmid rs$ (note that $q \nmid a$ by the coprimality assumption). Moreover,  since $q\mid \Delta$, $\id{q}$ is a prime of bad reduction for $C_{2,r}$. Since $q \nmid A$, conditions (i) and (ii) in \cite[p.\ 7]{computationconductor} are satisfied and then the statement follows from \cite[Theorem 5.4]{computationconductor}.
\end{proof}

\begin{prop} \label{prop:Pot-Good-red}
If $q\neq 2$ and $q\mid AB$ then $C_{2,r}$  and $J_{2,r}$ have  potentially good reduction at $\id{q}$. Moreover, if $q\neq r$, the conductor exponent of $\{\rho_{J_{2,r},\lambda}\}$ at $\id{q}$ equals $2$.
\end{prop}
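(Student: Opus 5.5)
We have to show that for an odd prime $q\mid AB$ the curve $C_{2,r}$ and its Jacobian acquire good reduction over a finite extension of $K_{\id{q}}$. When $q\neq r$ we also have to show the conductor exponent is $2$. The tool is the cluster picture of $F(x)$ over $K_{\id{q}}$, in the form recorded in Section~\ref{sec:odd-cond-general}, with $z=-ABb$ and $s=2A^{(r+1)/2}B^{(r-1)/2}a$. Since $\gcd(Aa,Bb)=1$, $q$ divides exactly one of $A$ and $B$. So there are two cases, $q\mid A$ and $q\mid B$, and in each $q$ divides both $z$ and $s$. (Note $q\nmid a$ when $q\mid B$, and $q\nmid b$ when $q\mid A$.)

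\emph{Step 1: potentially good reduction.} The roots of $F$ are $\theta_j=\omega_j\sqrt{z}+\ldots$, perturbations of $\sqrt{z}$ times the roots of the model $(-1)^{(r-1)/2}xh(2-x^2)$ by the term $s$. The cleaner route uses the substitution of Lemma~\ref{lemma:quadtwist}: over a ramified extension, rescale $x\mapsto \lambda x$, $y\mapsto\lambda^{r/2}y$, taking $\lambda$ to be a suitable root of $z$ (or of $s^{2/r}$). After this the polynomial becomes $(-1)^{(r-1)/2}xh(2-x^2)+s'$, the shape of $C_r(s')$, where $v_{\id q}(s'^2-4)=0$ or where the new constant term is a unit. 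Concretely, $\Delta/\lambda^{2r}=s'^2-4$ has valuation $0$ if and only if $v(Aa^2+Bb^r)$ matches, and this is exactly where $q\mid AB$ is used: $v_q(\Delta)=v_q(4A^rB^{r-1})$ comes only from the $A^rB^{r-1}$ part, because $q\nmid Aa^2+Bb^r$. In cluster-picture terms, all $r$ roots are equidistant (a single proper cluster of depth $v(z)/2$ or $v(s)/r$, with no subclusters), apart from the root at infinity. Then \cite[Theorem 1.8]{BBB}'s criterion, that every proper cluster has size $\le 2$ except the top one, gives potentially good reduction. The same holds for $J_{2,r}$. I would check the two cases $q\mid A$ and $q\mid B$ separately, comparing $v(z)^r$ with $v(s)^2$ to see which scaling dominates. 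Coprimality guarantees that the difference $\Delta$ has the expected valuation, so no cancellation occurs.

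\emph{Step 2: conductor exponent when $q\neq r$.} With potentially good reduction and $q\nmid r$, no wild ramification occurs if $q$ is large relative to the inertia image, and the conductor exponent is $2\cdot\dim_K$-rank minus the dimension of inertia invariants, computed per $\lambda$-adic representation. Since $\rho_{J_{2,r},\lambda}$ is $2$-dimensional, the exponent is $0$ if inertia acts trivially and $2$ otherwise, provided inertia acts tamely. Tameness: the field giving good reduction is $K_{\id q}(z^{1/2})$ or $K_{\id q}(s^{1/r})$ (up to unramified adjustment). Its ramification degree divides $2r$, which is prime to $q$ because $q$ is odd and $q\neq r$. Nontriviality: $J_{2,r}$ does not have good reduction over $K_{\id q}$ itself. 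The reason is that the depth $v(z)/2$ or $v(s)/r$ is non-integral, because $A$ is squarefree and $B$ is $r$-power free and $K/\Q$ is unramified at $q$. Hence inertia acts through a nontrivial finite quotient on $V_\lambda$, and it acts with no nonzero fixed vectors. Indeed, a nontrivial semisimple finite-order element of a $2$-dimensional rep of $\GL_2(K)$-type has eigenvalues $\zeta,\zeta^{-1}$ with trivial determinant on inertia, so both are $\neq1$. This gives exponent $2$.

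The main obstacle is Step 1's case analysis. One has to verify that the valuation of $s$ or $z$ is genuinely not divisible by $r$ or $2$ as needed, so that reduction is not already good. The subtle case is $q\mid B$ with $v_q(B)$ even, where the quadratic scaling is integral. There I would instead argue via the $r$-th root of $s$, using $v_q(s)=\tfrac{r-1}{2}v_q(B)$ and $r\nmid v_q(B)$ together with $r\nmid \tfrac{r-1}{2}$, to get non-integral depth.
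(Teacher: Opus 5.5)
Your Step 1 is essentially the paper's argument. Over $K_{\mathfrak q}$ the cluster picture is the single cluster $\mathcal R$ with no proper subclusters; the paper takes this from \cite[Corollary 3.5]{computationconductor} for $q\mid B$ and Proposition~\ref{prop:clusteroverK} for $q\mid A$. Potentially good reduction then follows from \cite[Theorem 5.5]{BBB}. Two things need fixing.
\begin{itemize}
\item You misquote the criterion. The correct condition is that no proper cluster has size $<2g+1$, which here means no proper cluster other than $\mathcal R$. Twins would give potentially toric reduction, so ``size $\le 2$'' is not enough; your argument survives only because there are no subclusters at all.
\item The first assertion also covers $q=r$ when $r\mid AB$. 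Your rescaling heuristic (reducing to $X^r+1$, or to $C_r(s')$ with $v(s'^2-4)=0$) proves nothing there, since $\zeta_r\equiv 1$ and $r^r$ divides the discriminant. Use instead the identity $\gamma_i-\gamma_j=(\zeta_r^i-\zeta_r^j)(\alpha_0-\zeta_r^{-i-j}\beta_0)$. For $q\mid A$, combine it with Lemma~\ref{lemma:valuations}. For $q\mid B$, combine it with $v(\alpha_0)\neq v(\beta_0)$, which holds because $v(s^2)<v(z^r)$. This shows all root differences have the same valuation, with an extra $v(\zeta_r^i-\zeta_r^j)=1/(r-1)$ when $q=r$. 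It also shows that for $q\mid B$ the depth is always $v(s)/r$, so your parity split on $v_q(B)$ is unnecessary.
\end{itemize}

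Your Step 2 takes a genuinely different route. The paper first computes the conductor of the curve, $\mathfrak n(C/K_{\mathfrak q})=r-1$, from the cluster picture via the tame formula. It uses Proposition~\ref{prop:newconductorK} for $q\mid A$ and \cite[Theorem 5.4]{computationconductor} for $q\mid B$. It then divides by $[K:\mathbb{Q}]=(r-1)/2$ using \cite[Proposition 6.3]{computationconductor}.

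You instead work directly with the $2$-dimensional $\rho_{J_{2,r},\lambda}$. Inertia acts through a finite tame quotient. It acts non-trivially: non-integral depth forces the splitting field of $F$ to be ramified, so inertia moves $J[2]$. N\'eron--Ogg--Shafarevich and compatibility of traces then give non-triviality on every $V_\lambda$. Finally, inertia acts with trivial determinant, so every non-identity element has eigenvalues $\zeta^{\pm1}\neq 1$, and $V_\lambda^{I}=0$.

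This argument is valid and lighter: it needs no sets $U,V$ and no count of inertia orbits. Its load-bearing input, however, is $\det\rho_{J_{2,r},\lambda}=\chi_\ell$, which you assert without justification. Without it, inertia acting as $1\oplus\epsilon$ would give exponent $1$. You should cite it: it comes from real multiplication by the totally real field $K$, as in Lemma~\ref{lem:cyclotomic_determinant}.

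One further slip: for $q\mid A$, good reduction requires adjoining $z^{1/4}$, so the ramification index is $4$ (cf.\ $\tilde\lambda_{\mathcal R}=r/4$), not a divisor of $2r$. This is harmless, since tameness only uses $q\nmid 2r$.

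What the paper's route buys is uniformity. It is the same machinery that handles $q=r$, where your non-integrality argument breaks down; for example, the exponent is $0$ when $r\mid A$, $r\mid a$ and $r\equiv 7\pmod 8$.
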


\begin{proof}
In this case we have $q\neq 2$, $q\mid \Delta$ and $q\mid s$. 

Applying \cite[Theorem 5.5]{BBB}, we can show that $C_{2,r}$ and $J_{2,r}$ have potentially good reduction at $\id{q}$ from the cluster picture of the curve. This cluster picture is computed in \cite[Corollary 3.5]{computationconductor} for the case $q\mid B$ and in Proposition \ref{prop:clusteroverK} for the case $q\mid A$.

The last statement follows from  \cite[Proposition 6.3]{computationconductor} and \cite[Theorem 5.4]{computationconductor} (resp.\  Proposition \ref{prop:newconductorK} in the present article) if $q\mid B$ (resp.\ $q\mid A$).
\end{proof}

Let $\id{r}$ be the unique prime ideal of $K$ above $r$, as in Section \ref{sec:notation}.

\begin{lemma}\label{lemma:pot-good-red-at-r}
If $r\nmid Aa^2+Bb^r$ then $C_{2,r}$ and $J_{2,r}$ have potentially good reduction at $\id{r}$.
\end{lemma}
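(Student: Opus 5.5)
We plan to argue through the cluster picture of $C_{2,r}$ over a suitable extension of $K_{\id{r}}$ and then invoke the criterion \cite[Theorem 5.5]{BBB}. That criterion says a hyperelliptic curve (in odd residue characteristic) has potentially good reduction if and only if every proper cluster of size less than $2g+1$ is odd, i.e.\ there are no even non-trivial clusters; equivalently, it suffices that the roots of $F$ form a single cluster together with singletons, or more generally have only odd proper clusters. Since $r$ is odd and $\id{r}$ lies above $r$, the hypothesis of odd residue characteristic holds. Good reduction of the Jacobian then follows from that of the curve, as in Proposition~\ref{prop:Pot-Good-red}.

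First we record the situation at $r$. Since $r\nmid Aa^2+Bb^r$, we have $v_r(\Delta)=r\,v_r(A)+(r-1)v_r(B)$, because $\Delta=4A^rB^{r-1}(Aa^2+Bb^r)$. We split into cases.
\begin{enumerate}
\item If $r\mid AB$, the argument is already covered. When $r\mid B$, the cluster picture is the one from \cite[Corollary 3.5]{computationconductor}. When $r\mid A$, it is the one from Proposition~\ref{prop:clusteroverK}. These computations did not use $q\neq r$ for the potentially good reduction part of Proposition~\ref{prop:Pot-Good-red}, so that part applies verbatim with $q=r$.
\item If $r\nmid AB$, we are in the case $r\nmid \Delta$, i.e.\ $r\nmid s^2-4z^r$. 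Here we use the explicit description of the roots of $F$ from \cite{computationconductor}. The roots are $\theta_j=\alpha\zeta_r^j+\beta\zeta_r^{-j}$, where $\alpha^r,\beta^r$ are the two roots of $X^2-sX+z^r$ and $\alpha\beta=z$, up to sign conventions.
\end{enumerate}

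In the second case, the differences satisfy
\[
\theta_j-\theta_k=(\zeta_r^j-\zeta_r^k)\bigl(\alpha-\beta\,\zeta_r^{-j-k}\bigr).
\]
The first factor has valuation $1/(r-1)$ in $r$-normalized valuation. For the second factor, note that $\alpha^r-\beta^r=\pm\sqrt{\Delta}$ is an $r$-adic unit. The factor $\alpha-\beta\zeta_r^{m}$ divides $\alpha^r-\beta^r$, so every such factor is a unit. Hence all pairwise root distances are equal. The cluster picture, after a finite extension where the roots are defined, is therefore a single cluster of size $r=2g+1$ with no proper subclusters of size $\ge 2$. By \cite[Theorem 5.5]{BBB} the curve, and hence $J_{2,r}$, has potentially good reduction at $\id{r}$. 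If $z$ is not a unit we first rescale $x$; when $r\mid z$ but $r\nmid s$, all roots have the same valuation and the same unit argument applies after normalizing by $s^{1/r}$.

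The main obstacle is making the valuation estimate of $\alpha-\beta\zeta_r^m$ rigorous in all sub-cases, particularly when $\alpha,\beta$ are non-units (that is, $r\mid z$ or $r\mid A$), where one must normalize correctly. If this fails, the fallback is to cite the analogous computations of \cite[Section 3]{computationconductor} for $q=r$ directly.
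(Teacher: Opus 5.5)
Your proposal is correct and is essentially the paper's proof: the case $r\mid AB$ is handed off to Proposition~\ref{prop:Pot-Good-red}, and for $r\nmid AB$ one has $r\nmid\Delta$, so the roots form a single cluster of size $r=2g+1$ with no proper subclusters and \cite[Theorem 5.5]{BBB} applies. The only difference is that you derive this cluster picture by hand instead of citing \cite[Corollary 3.5]{computationconductor}. Your derivation is sound as it stands: integrality of $\alpha,\beta$ together with $v_r(\alpha^r-\beta^r)=0$ already forces every factor $\alpha-\zeta_r^m\beta$ to be a unit, also when $r\mid b$, so the rescaling and the fallback you mention are not needed.

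One small correction: the potentially-good-reduction criterion for the curve itself is that there are \emph{no} proper clusters of size $<2g+1$. Your ``all such clusters are odd'' is the corresponding criterion for the Jacobian. Since your cluster picture has no proper subclusters at all, the conclusion is unaffected.
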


\begin{proof}
If $r\mid AB$, the result follows from Proposition \ref{prop:Pot-Good-red}. Otherwise, $r\nmid \Delta$ and the results follow again by applying \cite[Theorem 5.5]{BBB} to the cluster picture given in \cite[Corollary 3.5]{computationconductor}.
\end{proof}

Determining the conductor exponent of the representation $\{\rho_{J_{2,r},\lambda}\}$ at primes dividing $2r$ is a significantly more difficult problem, and demands substantial effort. In those cases we have the following results.

\begin{theorem}\label{thm:conductor2rp}
Then the conductor exponent of $\{\rho_{J_{2,r},\lambda}\}$ at $\id{r}$ equals 
\begin{equation*}
    \mathfrak{n}_{\id{r}}(\rho_{J_{2,r},\lambda}) = 
    \begin{cases}
    2 &\text{ if } r \mid A, r\mid a \text{ and } r\not \equiv 7 \pmod{8}, \\
    0 &\text{ if } r \mid A, r\mid a \text{ and } r \equiv 7 \pmod{8}, \\
    \frac{r+5}{2} &\text{ if } r \mid A \text{ and } r\nmid a,  \\
    2 & \text{ if } r \nmid ABCc \text{ and } F(x) \text{ is reducible over } \Q_r, \\
    3 & \text{ if } r \nmid ABCc \text{ and } F(x) \text{ is irreducible over } \Q_r, \\
    r+2 & \text{ if } r \mid B \text{ and } r\nmid A, \\
    \frac{r+5}{2} & \text{ if } r \nmid ABc \text{ and } v_r(C) = 1, \\
    3 & \text{ if } r \nmid ABc \text{ and } v_r(C) = 2, \\
    2 & \text{ if } r \nmid AB \text{ and either } r \mid c \text{ or } v_r(C)\ge3.       
\end{cases}
\end{equation*}
\end{theorem}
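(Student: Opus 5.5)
The plan is to reduce every case to the cluster picture of $F(x)$ over $K_{\id{r}}$ (or over $\Q_r$ followed by base change) and then apply the conductor formula for semistable or potentially good reduction of hyperelliptic Jacobians from \cite{BBB}. Since $J_{2,r}$ is of $\GL_2(K)$-type by Theorem~\ref{thm:2rp-GL2-type}, the conductor of $J_{2,r}/K$ at $\id{r}$ equals $\mathfrak{n}_{\id{r}}(\rho_{J_{2,r},\lambda})\cdot [K:\Q] = \mathfrak{n}_{\id{r}}\cdot\frac{r-1}{2}$. It therefore suffices to compute the conductor exponent of the Jacobian at $\id{r}$ and divide by $g=(r-1)/2$. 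The tame part comes from the inertia action on $H^1$ (from the clusters), and the wild part from the Swan conductor, which we read off from the depths and the ramification of the roots of $F$.

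The cases split naturally according to which of $A$, $B$, $a$, $c$, $C$ are divisible by $r$, recalling that $Aa^2+Bb^r=Cc^p$.

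\textbf{Case $r\mid B$, $r\nmid A$.} This is the situation already treated in \cite[Corollary 3.5, Proposition 6.3]{computationconductor}, with $z=-ABb$ and $s=2A^{(r+1)/2}B^{(r-1)/2}a$. We transfer that result directly, obtaining $r+2$.

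\textbf{Case $r\nmid AB$.} Here $A$ enters only through the value $v_r(\Delta)=v_r(Cc^p)$. The results of \cite{computationconductor}, stated for $A=1$, depend only on $v_r(z)$, $v_r(s)$ and $v_r(\Delta)$, so they carry over verbatim. When $r\nmid Cc$ the curve has good reduction over a tame extension. The exponent is then $2$ or $3$ according as the roots of $F$ generate an extension in which $\id{r}$ has small or large wild ramification; this is detected by the reducibility of $F$ over $\Q_r$. When $v_r(\Delta)=1$ or $2$ we get the exponents $\frac{r+5}{2}$ and $3$ respectively. When $v_r(\Delta)$ is large (because $r\mid c$ with $p\ge 3$, or $v_r(C)\ge 3$), the roots become close to those of $x\,h(2-x^2/z)+2\sqrt{z}^{\,r}$, the cluster picture stabilises, and the exponent is $2$.

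\textbf{Case $r\mid A$ (new).} Here we invoke Proposition~\ref{prop:clusteroverK} and Proposition~\ref{prop:newconductorK}, proved in Section~\ref{sec:odd-cond-general}. With $A$ squarefree, $v_r(z)=1$ and $v_r(s)=\frac{r+1}{2}+v_r(a)$. We substitute $x=\pi x'$ for a suitable uniformiser $\pi$ of a small extension of $K_{\id{r}}$ and rescale so that the model becomes $y^2=x'h(\cdot)+s'$. If $r\nmid a$, the constant term has odd valuation relative to the rescaling, which forces a wildly ramified extension of $\Q_r(\zeta_r)$ of a specific shape. Computing the Swan conductor via the upper-numbering breaks of the splitting field then yields $\frac{r+5}{2}$. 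If $r\mid a$, then $s/\pi^{r/2}$ is divisible by $r$, and the curve becomes isomorphic, after a quadratic twist by $\sqrt{-ABb}$ (or a unit), to a curve with good reduction. Whether this twist is unramified at $\id{r}$ over $K$ depends on whether $-1$, $2$, or $-2$ times the relevant unit is a square in $K_{\id{r}}$. This in turn is governed by $r \bmod 8$ (through $\left(\frac{2}{r}\right)$ and $\left(\frac{-1}{r}\right)$), giving exponent $0$ exactly when $r\equiv7\pmod 8$ and $2$ otherwise.

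\textbf{Main obstacle.} The hardest step will be the wild conductor computations. The case $r\mid A$, $r\nmid a$ requires determining the ramification filtration of the splitting field of $F$ over $K_{\id{r}}$. The case $r\nmid ABCc$ requires relating the irreducibility of $F$ over $\Q_r$ to the Swan conductor. For both, the first approach would be to find an explicit Eisenstein-type substitution that makes the roots expressible via $\zeta_r$ and an $r$-th root of a unit, and then to use known breaks for Kummer extensions of $\Q_r(\zeta_r)$.
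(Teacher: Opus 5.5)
Your load-bearing steps are the paper's proof. The $\lambda$-adic exponent is the conductor exponent of $J_{2,r}/K_{\id{r}}$ divided by $g=[K:\Q]=\frac{r-1}{2}$. For $r\nmid A$ one quotes \cite{computationconductor}; the paper uses its Corollary~6.7 for all these cases at once. For $r\mid A$, squarefreeness of $A$ and $\gcd(Aa,Bb)=1$ give $v_r(z)=1$ and $v_r(s)=\frac{r+1}{2}+v_r(a)$, so Proposition~\ref{prop:newconductorK} applies, and its entries $r-1$, $0$, $\frac{(r-1)(r+5)}{4}$ become $2$, $0$, $\frac{r+5}{2}$. Once you invoke that proposition nothing further is needed. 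In particular, your ``main obstacle'' is already handled: for $r\mid A$, $r\nmid a$ by Proposition~\ref{prop:wildconductor}, and for $r\nmid ABCc$ by \cite{computationconductor}. Proposition~\ref{prop:wildconductor} uses the Eisenstein polynomial $(x+u)^r-u$, as you guessed, but works through the discriminant of $\Q_r(\gamma_0)/\Q_r$ rather than upper ramification breaks.

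The heuristic mechanisms you attach, however, are wrong and should be discarded.

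First, for $r\nmid ABCc$ with $F$ irreducible, the curve cannot acquire good reduction over a tame extension. Tame inertia acting on a potentially good $\rho_{J_{2,r},\lambda}$ gives exponent at most $2$, so the exponent $3$ forces wild inertia. Relatedly, the $A=1$ results transfer because the hypotheses of \cite{computationconductor} are local and fail only at primes dividing $A$. It is not because the answer depends only on $v_r(z),v_r(s),v_r(\Delta)$: in this case it depends on the reducibility of $F$.

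Second, for $r\mid A$, $r\mid a$, the mod $8$ dichotomy does not come from whether $-1$, $2$ or $-2$ times a unit is a square. At the odd place $\id{r}$, the ramification of $K_{\id{r}}(\sqrt d)$ depends only on the parity of $v_{\id{r}}(d)$. For $d=-ABb$ this valuation is $\frac{r-1}{2}$, which only sees $r\bmod 4$. Worse, for $r\equiv1\pmod4$ no quadratic twist has good reduction at all, because inertia acts through a cyclic group of order $4$; for $r=5$ this is Lemma~\ref{lemma:potgoodreddegree4}.

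The true source of the dichotomy is the cluster data over $K_{\id{r}}$. The quantities $d_{\mathcal{R}}=\frac{r+1}{4}$ and $\tilde\lambda_{\mathcal{R}}=\frac{r(r+1)}{8}$ are both integers exactly when $r\equiv7\pmod8$. This puts all singletons in $U$ and $\mathcal{R}$ in $V$. Combined with $e_{L/K}=1$ (Lemma~\ref{lemma:indexram}, since $r\equiv3\pmod4$), it gives tame conductor $(r-1)-r+1=0$. Otherwise $U=V=\emptyset$ and the conductor is $r-1$.
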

\begin{proof}
    If $r\nmid A$, the result follows from \cite[Corollary 6.7]{computationconductor}. If $r\mid A$, the conductor exponent of $C_{2,r}$ at $\id{r}$ follows from \cite[Proposition 6.3]{computationconductor} and Proposition \ref{prop:newconductorK} in this article.
\end{proof}

Let $\id{q}_2$ be a prime of $K$ above 2. In this case, we are only able to find the conductor exponent  under certain restrictive hypotheses. This is the content of the following proposition, proved in \cite{ChenVillagra}.

\begin{prop}\label{prop:conductor-at-2}
Suppose that $v_2(Bb^r)\ge 6$. Then $C_{2,r}$ has potentially good reduction. Moreover, there exists an explicit quadratic character $\chi$ such that the twisted curve $C_{2,r}\otimes\chi$ acquires good reduction over any extension $L/K_{\id{q}_2}$ with ramification index $r$.

Furthermore, the conductor exponent of $\{\rho_{J_{2,r}\otimes\chi, \lambda}\}$ at $\id{q}_2$ equals 2.
\end{prop}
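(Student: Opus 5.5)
Since $v_2(Bb^r)\ge 6$ and $\gcd(Aa,Bb)=1$, both $A$ and $a$ are odd. We work with the general model $C(z,s)$ of \eqref{eqn:generalfrey}, where $z=-ABb$ and $s=2A^{(r+1)/2}B^{(r-1)/2}a$. The quantity $\Delta=s^2-4z^r=4A^rB^{r-1}(Aa^2+Bb^r)$ then has $2$-adic valuation determined by $v_2(B^{r-1})$ plus $2$. The plan is to use the cluster picture machinery of \cite{BBB}, as in Proposition~\ref{prop:Pot-Good-red}. Residue characteristic $2$ is not covered by \cite{BBB}, so at $2$ we will instead construct an explicit change of variables exhibiting good reduction after a tame base change and a twist.

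\textbf{Step 1: locate the roots.} We first locate the roots of $F(x)$ over $\overline{K}_{\id q_2}$. Writing $x=\sqrt{z}\,(\theta+\theta^{-1})$-type substitutions, the polynomial $(-z)^{(r-1)/2}xh(2-x^2/z)$ is, up to scaling, the Chebyshev-type polynomial whose roots are $\sqrt{-z}\,(\zeta_r^k+\zeta_r^{-k})$-like expressions. Consequently $F(x)=0$ becomes $u^r+z^r u^{-r}=-s$ after the substitution $x=u+z/u$ (up to signs). The roots are therefore
\[
u=\Big(\tfrac{-s\pm\sqrt{\Delta}}{2}\Big)^{1/r}\zeta_r^k .
\]
Since $v_2(z)$ is large, one branch $\beta=(-s-\sqrt\Delta)/2$ has $u^r\approx -s$, a unit times $2$ in the relevant normalisation, while the other is tiny. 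So all $r$ roots of $F$ are $x_k=\beta^{1/r}\zeta_r^k+z\beta^{-1/r}\zeta_r^{-k}$. This shows that over $L=K_{\id q_2}(\beta^{1/r})$, which has ramification index dividing $r$ over $K_{\id q_2}$, the roots are $\beta^{1/r}\zeta_r^k$ plus a small correction.

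\textbf{Step 2: rescale and complete the square.} Next we set $x=\beta^{1/r}X$. After extracting a square factor (a quadratic twist $\chi$ by the relevant unit, essentially $\sqrt{-s}$ up to $2$-adic units, which is the "explicit quadratic character"), the curve becomes
\[
Y^2=X^r-1+(\text{terms divisible by } z\beta^{-2/r}).
\]
The model $y^2=x^r-1$ has bad reduction at $2$ because of the $2$ in front of $y$. We therefore apply the standard completion of the square: substitute $Y=2W+P(X)$, with $P$ chosen so that $P^2\equiv X^r-1$ modulo $4$ on the relevant coefficients. The hypothesis $v_2(Bb^r)\ge 6$ is exactly what guarantees that the correction terms have valuation at least $2$ (indeed at least $3$, since $6/\,$something after scaling). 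This lets us choose $P$ so that the resulting equation $W^2+PW=Q(X)$ has integral coefficients and good reduction, namely a curve of genus $(r-1)/2$ of the form $W^2+X^{(r+1)/2}W=\ldots$ over $\F_2$. Establishing this gives potentially good reduction and good reduction of $C_{2,r}\otimes\chi$ over $L$.

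\textbf{Step 3: good reduction over any such $L$.} To see that every extension with ramification index $r$ works, we note that $\beta$ is, up to a unit square, an element whose $r$-th root generates a tame extension. Any totally ramified extension of degree $r$ contains an $r$-th root of $\beta$ times a unit that is an $r$-th power, by tame ramification theory, possibly after an unramified extension, which is harmless for good reduction.

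\textbf{Step 4: the conductor.} Inertia then acts on $\rho_{J_{2,r}\otimes\chi,\lambda}$ through a tame quotient of order $r$, acting nontrivially since the reduction over $K_{\id q_2}$ itself is not good. Wild conductor is $0$, and the tame part is $\dim V-\dim V^{I}=2-0=2$, because an order-$r$ inertia element has no fixed vector on the $2$-dimensional $K_\lambda$-representation (its eigenvalues are primitive $r$-th roots of unity paired by the $\GL_2(K)$-structure). Hence the exponent is $2$.

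\textbf{Main obstacle.} The main obstacle is Step 2: finding the explicit polynomial $P$ and verifying integrality and smooth reduction. This is a delicate $2$-adic computation in which the bound $6$ enters. I would first try it for $r=5,7$ by hand to guess the general shape of $P$ as a truncated binomial expansion of $(X^r-1)^{1/2}$.
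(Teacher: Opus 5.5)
Your direct explicit-model route differs from the paper's, which does not argue here but imports all three assertions from \cite{ChenVillagra}: Corollary~4.8 with $t=Aa^2/(Aa^2+Bb^r)$, the twist from Proposition~4.3, and the exponent from Proposition~2.16. A direct argument is feasible, but as written yours has two genuine gaps.

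\textbf{Step 2 fails as designed.} After $x=\beta^{1/r}X$ and the twist you have $Y^2=X^r-1+\cdots$. No polynomial $P$ satisfies $P^2\equiv X^r-1\pmod 2$, because squares in $\mathbb{F}_2[X]$ involve only even powers of $X$ and $r$ is odd. So no truncated expansion of $(X^r-1)^{1/2}$ can help. The problem is that your scaling of $x$ is off by a factor $4^{1/r}$. Relatedly, $K_{\id q_2}(\beta^{1/r})$ is not the right field: it is unramified when $v_2(B)=2$.

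What works is the following. Write $F(x)=x^r+\sum_{k\ge1}c_{r-2k}z^kx^{r-2k}+s$, twist by $s$, and put $x=\mu X$ with $\mu^r=4s$. Then
\[
y^2=1+4\Big(X^r+\sum_{k\ge1}c_{r-2k}\epsilon^kX^{r-2k}\Big),\qquad \epsilon=\frac{z}{\mu^{2}},\quad v_2(\epsilon)=\frac{v_2(Bb^r)-6}{r}.
\]
Substituting $y=1+2W$ (so $P=1$ is constant) gives $W^2+W=X^r+\sum_k c_{r-2k}\epsilon^kX^{r-2k}$. This is exactly where the hypothesis $v_2(Bb^r)\ge 6$ enters: it makes $\epsilon$ integral. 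The special fibre is then an Artin--Schreier curve with odd-degree right-hand side, hence smooth of genus $(r-1)/2$; it is not of the shape $W^2+X^{(r+1)/2}W=\cdots$.

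With this model, your Step 3 and the tame part of Step 4 go through. For $\lambda\nmid 2$ the determinant is trivial on inertia, so a nontrivial element of order $r$ has eigenvalues $\zeta^{\pm1}\ne1$ and hence no fixed vector.

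\textbf{Step 4 assumes what needs proof.} You assert that inertia acts nontrivially ``since the reduction over $K_{\id q_2}$ itself is not good'', with no argument, and this is not automatic. Since $A$ and $a$ are odd, $v_2(4s)=3+\frac{r-1}{2}v_2(B)$; in particular $s$ is not $2$ times a unit once $2\mid B$. Hence $K_{\id q_2}(\mu)/K_{\id q_2}$ has ramification index $r$ unless $r\mid v_2(Bb^r)-6$. The exceptional cases are $r=5$ with $v_2(B)=1$, and $r\ge7$ with $v_2(B)=6$. In those cases the extension is unramified and the twist already has good reduction at $\id q_2$.

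Concretely, take $r=5$ and $(A,B,a,b)=(1,2,1,2)$. The curve $y^2=x^5+20x^3+80x+8$, twisted by $2$, becomes $W^2+W=X^5+5X^3+5X$ over $\Q$, whose discriminant $5^5\cdot 65^2$ is odd. So every quadratic twist of $J_{2,5}(1,2)$ has conductor exponent $0$, $4$ or $6$ at $\id q_2$, never $2$.

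Thus Step 4 can only be completed after adding the hypothesis $r\nmid v_2(Bb^r)-6$. Under that hypothesis, $v_2(\mu)\notin\mathbb{Z}$ forces ramification index exactly $r$, inertia acts nontrivially, and the exponent $2$ follows.
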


\begin{proof} 
The first statement follows by \cite[Corollary 4.8]{ChenVillagra}, setting $t=Aa^2/(Aa^2+Bb^r)$. The twist that appears in the statement is given by $\pm1$ or $\pm 2$, and it is explicitly determined in the proof of \cite[Proposition 4.3]{ChenVillagra}. 

The last statement follows by \cite[Proposition 2.16]{ChenVillagra}.
\end{proof}

One of the main contributions of the present article is the computation of the conductor exponent at odd primes dividing $A$. This was used in Proposition \ref{prop:mult-red-outside-2r} and Theorem \ref{thm:conductor2rp}, and it follows from Subsection \ref{sec:odd-cond-general} below.

\subsection{Odd conductor in general}\label{sec:odd-cond-general}

We keep the notation from the previous sections. The goal of this subsection is to extend the results of \cite{computationconductor} concerning the computation of the conductor of $C(z,s)$, previously established only for integers $z$ and $s$ satisfying conditions (i) and (ii) of Section 1.3 in loc.\ cit. We note that these results are no longer valid since, when specializing to 
\begin{equation}\label{eqn:valuesofzands}
    z = -ABb \quad\text{ and } \quad s = 2A^{(r+1)/2}B^{(r-1)/2}a,
\end{equation} 
condition~(i) fails to hold for primes $q$ dividing $A$.

Without loss of generality, we may assume that $A$ is square-free. Then, throughout this section we will focus on primes $q$ satisfying the following conditions: 
\begin{enumerate}[label=(\Roman*)]
    \item  \label{item1} $v_q(z) = 1$, 
    \item  \label{item2} $r = v_q(z^r) \le v_q(s^2) \iff v_q(s) \ge \frac{r+1}{2} > \frac{v_q(z^r)}{2}$.
\end{enumerate}
The main results of this section are the following propositions, which allow us to compute the odd conductor at primes satisfying hypotheses \ref{item1} and \ref{item2} above.

\begin{proposition}\label{prop:newconductorQ} 
Let $z, s\in\ZZ$ and let $C(z,s)/\Q$ be the curve defined in \eqref{eqn:generalfrey}. Let $q \in \Z$ be an odd prime of bad reduction. Suppose that $z,s$ and $q$ satisfy $\ref{item1}$ and $\ref{item2}$. Then, the conductor exponent of $C(z,s)$ at $q$ is given by Table \ref{table:conductorQ}. 
\begin{table}[H]
    \begin{tabular}{|c||l|}
    \hline
    $\mathfrak{n}(C/\Q_q)$ & \text{Condition} \\ \hline
    \hline
    ${r-1}$ &  ${q} \neq r$  \\ \hline
    $r-1$ & ${q} = {r}, \ v_r(s) \ge\frac{r+3}{2}, \ r \not \equiv 7 \pmod{8}$
    \\ \hline
    $r-3$ & ${q} = {r}, \ v_r(s) \ge\frac{r+3}{2}, \ r \equiv 7 \pmod{8}$ \\ \hline
    $\frac{3r-1}{2}$ & ${q} = {r}, \ v_r(s) = \frac{r+1}{2}$ \\ \hline
    \end{tabular}
    {\small
    \caption{Conductor exponent of $C(z,s):y^2 = F(x)$ at the rational prime $q$.}
        \label{table:conductorQ}}
\end{table}
\end{proposition}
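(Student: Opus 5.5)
The plan is to compute the conductor exponent with the cluster picture machinery of \cite{BBB}, following the same strategy as \cite{computationconductor}. There the roots of $F$ were written explicitly using the Chebyshev-type structure of $x\,h(2-x^2/z)$. Concretely, the roots of $F$ should be
\[
x_j=\zeta_r^j\alpha+\zeta_r^{-j}\beta,\qquad j=0,\dots,r-1,
\]
where $\alpha\beta$ is a fixed square root of $-z$ (up to sign) and $\alpha^r,\beta^r$ are the two roots of a quadratic whose discriminant is $\Delta=s^2-4z^r$. (This is the parametrization underlying \cite[Lemma 3.1]{computationconductor}.) Under hypotheses \ref{item1} and \ref{item2} we have $v_q(s^2)\ge r+1>r=v_q(z^r)$. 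Hence $v_q(\Delta)=r$, so both $\alpha^r$ and $\beta^r$ have valuation $r/2$, and $v_q(\alpha)=v_q(\beta)=1/2$.

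\textbf{Case $q\neq r$.} Here I would pass to $L=\Q_q(\sqrt{z})$, a ramified quadratic extension, and substitute $x=\sqrt{z}\,X$. The equation becomes a twist of $y^2=(-1)^{\frac{r-1}{2}}X\,h(2-X^2)+s/z^{r/2}$. In this model the constant term has valuation at least $1/2>0$, and the reduction is $X\,h(2-X^2)$ mod $q$, which is separable because its roots are $0$ and $\pm\sqrt{2-\omega_i}$ and $q\nmid r$. In cluster picture language, $\mathcal{R}$ is a single principal cluster of size $r$ and depth $1/2$, with no proper clusters. So $C$ has potentially good reduction, and acquires good reduction over $L$ after a quadratic twist. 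Inertia then acts on $H^1_{\text{ét}}$ through the nontrivial character of $\mathrm{Gal}(L/\Q_q)$, namely by the hyperelliptic involution $-1$, so the inertia invariants are trivial. Since $q$ is odd and $L/\Q_q$ is tamely ramified, the wild part vanishes and the conductor exponent is $2g=r-1$. I would confirm this directly with the conductor formula of \cite{BBB} for a depth-$1/2$ principal cluster of odd size.

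\textbf{Case $q=r$.} This is the main obstacle. The differences are $x_i-x_j=(\zeta_r^i-\zeta_r^j)\bigl(\alpha-\zeta_r^{-(i+j)}\beta\bigr)$. Now $v(\zeta_r^i-\zeta_r^j)=1/(r-1)$, and the second factor depends delicately on $\alpha/\beta$, i.e.\ on $(-s\pm\sqrt{\Delta})/(-s\mp\sqrt{\Delta})$. When $v_r(s)$ is larger, this ratio is closer to $-1$ (an $r$-th-root-type quantity), so the splitting of $\mathcal{R}$ into subclusters depends on whether $v_r(s)=\frac{r+1}{2}$ or $v_r(s)\ge\frac{r+3}{2}$.

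I would carry out the following steps in order.
\begin{enumerate}
\item Compute the cluster picture in each subcase, mirroring \cite[Corollary 3.5]{computationconductor} and its proof.
\item Compute the wild part using the Galois orbits of roots: sum over orbits of $v_r(\Delta_{\Q_r(x_j)/\Q_r})-[\Q_r(x_j):\Q_r]+f$, as in \cite{BBB}. I expect this to give the extra $\frac{r-1}{2}$ when $v_r(s)=\frac{r+1}{2}$ (wild ramification from $\alpha^r$ not being close to an $r$-th power), and zero wild contribution otherwise.
\item Compute the tame part as $2g-\dim$ of the inertia invariants, via the action of inertia on the principal clusters and on $\epsilon$-characters.
\end{enumerate}
The dichotomy $r\equiv7\pmod 8$ should come from the sign character $\epsilon$ of a subcluster. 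It is trivial exactly when a certain element, essentially $(-1)^{\frac{r-1}{2}}$ times a product of values of $2-\omega_i$, i.e.\ a unit related to $\sqrt{\pm 2}$ and $\sqrt{-r}$, is a square in the relevant residue field. That condition should reduce to $r\equiv7\pmod 8$, and then a two-dimensional piece becomes inertia-invariant, lowering the exponent from $r-1$ to $r-3$. Pinning down this square-class computation and the exact valuations of $\alpha-\zeta^k\beta$ is where most of the work lies.
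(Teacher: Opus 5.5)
For $q\neq r$ your argument is correct. It is a more hands-on route than the paper's, which reads $\mathfrak{n}=r-1$ off the tame conductor formula with $U=V=\emptyset$ for the single cluster of depth $\frac12$. Two small corrections:
\begin{itemize}
\item After $x=\sqrt{z}X$, the curve over $L=\Q_q(\sqrt z)$ is the twist of a good-reduction curve by the uniformizer $\sqrt z$. So the relevant character is that of $L(z^{1/4})/L$, not of $L/\Q_q$. Inertia of $\Q_q$ acts through an automorphism of order $4$ whose square is the hyperelliptic involution; this still has no invariants, so your conclusion stands.
\item One has $\alpha\beta=z$, not a square root of $-z$. Otherwise your valuations $v_q(\alpha)=v_q(\beta)=\frac12$ would be inconsistent.
\end{itemize}

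For $q=r$ there is a genuine gap: the plan is built on a cluster picture that does not occur. Since $\prod_{j=0}^{r-1}(\alpha-\zeta_r^j\beta)=\alpha^r-\beta^r=\pm\sqrt{\Delta}$ has valuation $\frac r2$ and each factor has valuation at least $\frac12$, every factor has valuation exactly $\frac12$. Hence all differences of roots have valuation $\frac{1}{r-1}+\frac12$, whatever $v_r(s)$ is. It is true that $(\alpha/\beta)^r$ is close to $-1$, but then $\alpha/\beta$ is close to some $-\zeta_r^k$, which is at unit distance from every $\zeta_r^m$ because $2$ is a unit. So in both subcases $\mathcal{R}$ is the only proper cluster, and there is no subcluster whose sign character could produce the condition $r\equiv 7\pmod 8$. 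In any case, the square class of a unit is invisible to inertia and cannot change a conductor. In the paper the mod $8$ condition is a denominator condition. With $d_{\mathcal{R}}=\frac{r+1}{2(r-1)}$ and $\tilde\lambda_{\mathcal{R}}=\frac{r(r+1)}{4(r-1)}$, one has $\xi_{\mathcal{R}}(\tilde\lambda_{\mathcal{R}})=0\le\xi_{\mathcal{R}}(d_{\mathcal{R}})$ exactly when $8\mid r+1$. In that case all singletons lie in $U$ and $\mathcal{R}\in V$; otherwise $U=V=\emptyset$ and the tame part is $r-1$.

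The dependence on $v_r(s)$ enters instead through the inertia orbits on $\mathcal{R}$ and through wild ramification, and your plan pins down neither.
\begin{itemize}
\item If $v_r(s)\ge\frac{r+3}{2}$, the point $(1,\frac{r+1}{2})$ lies strictly below the segment joining $(0,v_r(s))$ and $(r,0)$. So the Newton polygon breaks and $F$ has a root in $\Q_r$. The splitting field is then tamely ramified with $e=\frac{r-1}{2}$ for $r\equiv 3\pmod 4$, and inertia has three orbits on $\mathcal{R}$. This gives $r-1-3+1=r-3$ when $r\equiv 7\pmod 8$.
\item If $v_r(s)=\frac{r+1}{2}$, any root $\gamma\in\Q_r$ would satisfy $v_r(\gamma)\ge 1$, hence $F(\gamma)\equiv s\not\equiv 0\pmod{r^{(r+3)/2}}$. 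So $F$ is irreducible and $\Q_r(\gamma_0)/\Q_r$ is totally ramified of degree $r$. Inertia is transitive and the tame part is $r-1$ for every $r$.
\end{itemize}
The wild part is then $\frac{r+1}{2}$, not the $\frac{r-1}{2}$ you predict, which would give a total of $\frac{3r-3}{2}$. The real work is showing $v_r(\Delta(\Q_r(\gamma_0)/\Q_r))=\frac{3r-1}{2}$. The paper obtains this over $\Q_r(\sqrt\Delta)$ with uniformizer $\pi=\sqrt\Delta/r^{(r-1)/2}$. Putting $u=\alpha_0^r/\pi^r$, one has $\Q_r(\alpha_0)=\Q_r(\sqrt\Delta)(u^{1/r})$. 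The key estimate is $v_\pi(u^r-u)=1$, and this is precisely where $v_r(s)=\frac{r+1}{2}$ (rather than larger) is used. It makes $(x+u)^r-u$ Eisenstein, so the discriminant of $x^r-u$ is the relative discriminant; one then descends from $\Q_r(\alpha_0)$ to $\Q_r(\gamma_0)$. Your proposal has no substitute for this step.
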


Let $K=\Q(\zeta_r)^+$, as in Section \ref{sec:notation}.

\begin{proposition}\label{prop:newconductorK} 
Let $C(z,s)/K$ be the curve defined in \eqref{eqn:generalfrey}. Let $\id{q}$ be a prime of $K$ above an odd prime $q$. Suppose that $z,s$ and $q$ satisfy $\ref{item1}$ and $\ref{item2}$. Then, the conductor exponent of $C(z,s)$ at $\id{q}$ is given by Table \ref{table:conductorK}.
\begin{table}[H]
    \begin{tabular}{|c||l|}
    \hline
    $\mathfrak{n}(C/K_\id{q})$ & \text{Condition} \\ \hline \hline
    ${r-1}$ &  ${q} \neq r$  \\ \hline
    $r-1$ & ${q} = {r}, \ v_r(s) \ge\frac{r+3}{2}, \ r \not \equiv 7 \pmod{8}$
    \\ \hline
    $0$ & ${q} = {r}, \ v_r(s) \ge\frac{r+3}{2}, \ r \equiv 7 \pmod{8}$ \\ \hline
    $\frac{(r-1)(r+5)}{4}$ & ${q} = {r}, \ v_r(s) = \frac{r+1}{2}$ \\ \hline
    \end{tabular}
    {\small
    \caption{Conductor exponent of $C(z,s):y^2 = F(x)$ at the prime $\id{q}$ of $K$.}
        \label{table:conductorK}}
\end{table}
\end{proposition}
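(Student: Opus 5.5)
Since $C(z,s)/K$ is the base change of $C(z,s)/\Q$, I would deduce Table~\ref{table:conductorK} from Table~\ref{table:conductorQ} and the standard behaviour of the conductor under base change. The plan has two parts. First, the prime $\id{q}$ is unramified over $q$ when $q\neq r$. Second, $\id{r}$ is totally ramified of degree $(r-1)/2$ over $r$, and in that case I would work with the inertia action (via cluster pictures) rather than with the exponent alone.

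\textbf{Case $q\neq r$.} The extension $K_{\id{q}}/\Q_q$ is unramified. The conductor exponent is determined by the action of inertia and its higher ramification groups on $\ell$-adic Tate modules, and these groups are unchanged under unramified base change. Hence $\mathfrak{n}(C/K_{\id{q}})=\mathfrak{n}(C/\Q_q)=r-1$.

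\textbf{Case $q=r$.} Let $e=(r-1)/2$ be the ramification index of $K_{\id{r}}/\Q_r$.

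\begin{enumerate}
\item \emph{Cluster picture.} Under \ref{item1} and \ref{item2}, I would substitute $x=\sqrt{z}\,X$ (or a suitable root of $z$) and use that $s/z^{(r+1)/2}$ has nonnegative valuation. The roots of $F$ are then $\sqrt{z}$ times the roots of $X\,h(2+X^2)\pm s'$, which reduce to a polynomial with distinct roots over a tame extension. This shows potentially good reduction. It should also identify the field $L$ over which $C$ acquires good reduction: it is generated by $\sqrt{z}$, or $z^{1/2}$ together with the $r$-adic behaviour of $\zeta_r$, and in the equality case $v_r(s)=(r+1)/2$ further wild data coming from the polynomial $F$ itself.

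\item \emph{Tame situation, $v_r(s)\ge (r+3)/2$.} Here the conductor is tame: over $\Q_r$ it equals $2g-\dim V^{I}$, which is $r-1$ or $r-3$. I would compute the inertia action through the Galois action on the cluster picture (\cite[Theorem 5.5]{BBB} and the conductor formula in \cite{BBB}).
\begin{itemize}
\item When $r\equiv 7\pmod 8$, the dimension of the inertia invariants is $2$ over $\Q_r$. I expect the order of inertia to divide $e$, so that over $K_{\id{r}}$, whose inertia is the index-$e$ subgroup, the action becomes trivial. This gives exponent $0$: good reduction over $K$.
\item When $r\not\equiv 7\pmod 8$, the order of the tame character (related to $\sqrt{z}$, of order $2$, combined with a character of order dividing $r-1$) does not divide $e$. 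The invariants remain trivial, giving $r-1$.
\end{itemize}
The $8$-adic condition should enter through whether $-1$ or $2$ is a square modulo $r$, which controls whether the relevant character is killed by the index-$e$ subgroup.

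\item \emph{Wild situation, $v_r(s)=(r+1)/2$.} Over $\Q_r$ the exponent is $\frac{3r-1}{2}=(r-1)+\delta$ with Swan conductor $\delta=\frac{r+1}{2}$. Under the totally ramified base change of degree $e$, the tame part stays $r-1$ because invariants are still trivial. The Swan conductor transforms by Herbrand's function: the break $u$ becomes $e\,u$ when the base extension is tame. This gives a Swan conductor of $e\cdot\frac{r+1}{2}\cdot\frac{\text{(something)}}{}$. I would check that the total equals $(r-1)+\frac{r-1}{2}\cdot\frac{r+3}{2}=\frac{(r-1)(r+5)}{4}$, consistent with the Swan conductor scaling by $e$ when the wild part has a single break and $K_{\id{r}}/\Q_r$ is tame.
\end{enumerate}

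\textbf{Main obstacle.} The hard step is the case $v_r(s)\ge (r+3)/2$: pinning down exactly the tame inertia character over $\Q_r$ and showing it becomes trivial over $K_{\id{r}}$ precisely when $r\equiv 7\pmod 8$. I would first compute the Galois action on the cluster picture explicitly over $\Q_r(\zeta_r)$, then restrict to the inertia subgroup of $K_{\id{r}}$.
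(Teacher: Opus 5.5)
\textbf{There is a genuine gap.} Your case $q\neq r$ is fine: an unramified base change leaves inertia and its upper-numbering filtration unchanged. Your idea that the Swan conductor is multiplied by $e=\frac{r-1}{2}$ under the tame base change $K_{\id{r}}/\Q_r$ is also correct. But the case $q=r$, $v_r(s)\ge\frac{r+3}{2}$ is the only place where the table over $K$ differs qualitatively from the one over $\Q$, and there you only say you \emph{expect} the inertia image to have order dividing $e$ when $r\equiv 7\pmod 8$. For $r\not\equiv 7\pmod 8$, knowing that the order ``does not divide $e$'' would not suffice anyway: you need that no eigenvalue of a tame generator becomes $1$ after raising to the $e$-th power. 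Table~\ref{table:conductorQ} only records $\dim V^{I_{\Q_r}}$, which does not determine $\dim V^{I_{K_{\id{r}}}}$. So an explicit computation is unavoidable, and it is exactly what is missing.

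The paper does this computation directly over $K_{\id{r}}$ with the cluster-picture conductor formula. Rescaling by $e$ gives $d_{\mathcal R}=\frac{r+1}{4}$ and $\tilde\lambda_{\mathcal R}=\frac{r(r+1)}{8}$ (Proposition~\ref{prop:clusteroverK}). If $r\not\equiv 7\pmod 8$, then $\xi_{\mathcal R}(\tilde\lambda_{\mathcal R})>\xi_{\mathcal R}(d_{\mathcal R})$, so $U=V=\emptyset$ and $\mathfrak n=r-1$, with no orbit count needed. If $r\equiv 7\pmod 8$, both quantities are integers, $U$ consists of all singletons and $V=\{\mathcal R\}$. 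Since $r\equiv 3\pmod 4$, Lemma~\ref{lemma:indexram} gives $e_{E/\Q_r(\omega)}=1$, so Proposition~\ref{prop:ramification} yields $e_{L/\Q_r}=\frac{r-1}{2}=e_{K_{\id{r}}/\Q_r}$. Hence $L/K_{\id{r}}$ is unramified, $I_{K_{\id{r}}}$ fixes all $r$ roots, and $\mathfrak n=(r-1)-r+1=0$. The mod-$8$ condition therefore comes from the $2$-adic integrality of $d_{\mathcal R}$ and $\tilde\lambda_{\mathcal R}$, together with the unramifiedness of $E/\Q_r(\omega)$. It does not come from a quadratic-residue computation, which you have not carried out in any case.

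\textbf{Further errors.}
\begin{enumerate}
\item In Step~1, the substitution $x=\sqrt z\,X$ does not give distinct reductions when $q=r$. All roots of $Xh(2-X^2)$ are congruent to $0$ modulo the maximal ideal because $\zeta_r\equiv 1$; for $r=5$, for instance, $Xh(2-X^2)=X^5-5X^3+5X$. The root differences have valuation $\frac12+\frac1{r-1}$, not $\frac12$ (Proposition~\ref{prop:clustersQ}).
\item In Step~3, your final check is false: $(r-1)+\frac{r-1}{2}\cdot\frac{r+3}{2}=\frac{(r-1)(r+7)}{4}$. The Swan conductor over $K_{\id{r}}$ is $e\cdot\frac{r+1}{2}=\frac{r^2-1}{4}$, giving the total $(r-1)+\frac{r^2-1}{4}=\frac{(r-1)(r+5)}{4}$. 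This scaling holds for any tame base change, with no single-break hypothesis, since $G_{K_{\id{r}}}^{ev}=G_{\Q_r}^{v}$ for $v>0$.
\item Your claim that ``the tame part stays $r-1$ because invariants are still trivial'' needs a reason, since $V^{I_{K_{\id{r}}}}$ can be larger than $V^{I_{\Q_r}}$. In the paper it follows from the formula. For $r\not\equiv 7\pmod 8$ one again has $U=V=\emptyset$. For $r\equiv 7\pmod 8$, $K(\gamma_0)/K$ still has ramification index $r$, so $I_{K_{\id{r}}}$ acts transitively on $\mathcal R$ and the count is $(r-1)-1+1=r-1$.
\end{enumerate}
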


\subsubsection{Cluster pictures} 

The conductor of hyperelliptic curves can be deduced from their cluster picture. With the aim of proving Propositions \ref{prop:newconductorQ} and \ref{prop:newconductorK} we will construct the cluster picture of $C(z,s)$.

Let $q \ge 3$ be an odd prime. Then, we will denote by $v_q(\cdot)$ the standard $q$-adic valuation of $\Q_q$. When considering a field extension $K/\Q_q$ with ramification degree $e_{K/\Q_q}$, we will also write $v_q(\cdot)$ to denote the unique extension of the valuation of $\Q_q$ to $K$ satisfying that $v_q(q) = 1$. Finally, if $\id{q}$ is the unique prime ideal of the ring of integers of $K$ above $q$, $v_{\id{q}}(\cdot)$ will denote the valuation of $K$ normalized so that $v_\id{q}(q) = e_{K/\Q_q}$. Note that, by our choice of notation, $v_{\id{q}}(x) = e_{K/\Q_q}v_{q}(x)$ for all $x \in K$.

If $z$ and $s$ are the parameters of $C(z,s)$, we let
\begin{equation}\label{eqn:F}
    F(x)=(-z)^\frac{r-1}{2}xh\left(2-\frac{x^2}{z}\right)+s,
\end{equation}
be the defining polynomial of $C(z,s)$. Set $\mathcal{R}=\{\gamma_0,\cdots,\gamma_{r-1}\}\subseteq \overline{\Q}_q$ to be its set of roots. By \cite[Proposition 2.2]{computationconductor}, the roots of $F(x)$ are of the form $\gamma_j=\zeta_r^j\alpha_0+\zeta_r^{-j}\beta_0$, where 
\begin{equation*}\label{eqn:alpha0beta0}
 	\alpha_0=-\sqrt[r]{\frac{s+\sqrt{\Delta}}{2}}\in\overline{\Q}_q, \quad \quad \beta_0=-\sqrt[r]{\frac{s-\sqrt{\Delta}}{2}}\in\overline{\Q}_q.
\end{equation*}

Recall that, following \cite{computationconductor}, we use the short-hand notation where $\Delta$ is given by the expression $s^2-4z^r$. Hence we have 
\begin{align}\label{eq:alpha-beta}
    \alpha_0^r-\beta_0^r=-\sqrt{\Delta} \quad \text{and} \quad \alpha_0^r+\beta_0^r=-s.
\end{align}

\begin{lemma}\label{lemma:valuations}
Let $q \in \Z$ be an odd prime dividing $z$ and $s$ and such that \ref{item1} and \ref{item2} are satisfied. Then, for all $0 \le j \le r-1$,
\begin{equation}
    v_q(\alpha_0) = v_q(\beta_0) = v_q(\alpha_0-\zeta_r^j\beta_0) = \frac{1}{2} \  \text{ and } \ v_q(\sqrt{\Delta}) = \frac{r}{2}.
\end{equation}
\end{lemma}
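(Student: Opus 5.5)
We first compute $v_q(\sqrt{\Delta})$ directly, then read off $v_q(\alpha_0)$ and $v_q(\beta_0)$ from their $r$-th powers, and finally bound the differences $\alpha_0-\zeta_r^j\beta_0$ using a product identity.

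\textbf{Valuation of $\sqrt{\Delta}$.} By \ref{item1} and \ref{item2} we have $v_q(4z^r)=r$ and $v_q(s^2)\ge r+1>r$, since $v_q(s)\ge \frac{r+1}{2}$. The ultrametric inequality then gives
\[
v_q(\Delta)=v_q(s^2-4z^r)=r,
\]
so $v_q(\sqrt{\Delta})=r/2$.

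\textbf{Valuations of $\alpha_0$ and $\beta_0$.} By \eqref{eq:alpha-beta},
\[
\alpha_0^r=\frac{-s-\sqrt{\Delta}}{2},\qquad \beta_0^r=\frac{-s+\sqrt{\Delta}}{2}.
\]
Here $v_q(s)\ge \frac{r+1}{2}>\frac r2=v_q(\sqrt\Delta)$, and $2$ is a unit because $q$ is odd. Hence both $\alpha_0^r$ and $\beta_0^r$ have valuation exactly $r/2$, and so $v_q(\alpha_0)=v_q(\beta_0)=1/2$.

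\textbf{Valuations of the differences.} The ultrametric inequality only gives $v_q(\alpha_0-\zeta_r^j\beta_0)\ge 1/2$, and cancellation could in principle raise it. To rule this out, use the factorisation
\[
\prod_{j=0}^{r-1}(\alpha_0-\zeta_r^j\beta_0)=\alpha_0^r-\beta_0^r=-\sqrt{\Delta}.
\]
The left side has valuation $\sum_j v_q(\alpha_0-\zeta_r^j\beta_0)$, and each of the $r$ terms is at least $1/2$. The right side has valuation exactly $r/2$. Therefore every term equals $1/2$.

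This product argument is the only step that needs care. It works regardless of whether $q=r$: although $\zeta_r^j$ is not a unit-distance apart from $1$ when $q=r$, the argument uses only the lower bound $1/2$ on each factor together with the exact total $r/2$.
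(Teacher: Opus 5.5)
Your proof is correct and follows the paper's argument essentially step by step. You obtain $v_q(\Delta)=r$ from (I) and (II), read off $v_q(\alpha_0)=v_q(\beta_0)=1/2$ from $\alpha_0^r=-(s+\sqrt{\Delta})/2$ and $\beta_0^r=-(s-\sqrt{\Delta})/2$, and then use $\prod_{j=0}^{r-1}(\alpha_0-\zeta_r^j\beta_0)=\alpha_0^r-\beta_0^r=-\sqrt{\Delta}$, with the lower bound $1/2$ on each factor, to force each factor to have valuation exactly $1/2$. Your closing remark about $q=r$ is harmless but not needed, since the only property of $\zeta_r^j$ the argument uses is that it is a unit.
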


\begin{proof}

The last claim follows directly from \ref{item2} and the definition of $\Delta$. Adding and substracting the expressions for $\alpha_0^r - \beta_0^r$ and $\alpha_0^r + \beta_0^r$ in \eqref{eq:alpha-beta}, we find that
\begin{equation}
    v_q(\alpha_0) = v_q(\beta_0) = \frac{1}{r}\min\{v_q(\sqrt{\Delta}), v_q(s)\} = \frac{1}{2},
\end{equation}
where we have used \ref{item2}. Hence, it follows that $v_q(\alpha_0-\zeta_r^j\beta_0)\ge \min\{v_q(\alpha_0), v_q(\beta_0)\}\ge1/2$ for all $0\le j\le r-1$. Since 
\begin{equation}
    \frac{r}{2}=v_q\left(-\sqrt{\Delta}\right) = v_q(\alpha_0^r-\beta_0^r)=\sum_{j=0}^{r-1}v_q(\alpha_0-\zeta_r^j\beta_0)\ge r\cdot\frac{1}{2}=\frac{r}{2},
\end{equation}
all inequalities are in fact equalities, proving that $v_q(\alpha_0) = v_q(\beta_0) = v_q(\alpha_0-\zeta_r^j\beta_0) = 1/2$.
\end{proof}

With the help of the previous lemma, it is straightforward to compute the cluster pictures of $C(z,s)$, both over $\Q$ and over $K$. For this, we introduce some terminology related to cluster pictures.

\begin{definition} 
A \textit{cluster} $\s$ is a non-empty subset of $\mathcal{R}$ of the form $\s=D\cap \mathcal{R}$ for some disc $D=\{x\in\overline{K} : v(x-z)\ge d\}$, where $z\in \overline{K}$ and $d\in\Q$.
\end{definition}

It is well known that given two clusters $\s_1$ and $\s_2$, then either they are disjoint or one is contained in the other. If $\s,\s'$ are two clusters such that $\s'\subsetneq \s$ is a maximal subcluster, we refer to $\s'$ as a \textit{child} of $\s$. Consistently, $\s$ will be the \textit{parent} of $\s'$, and we will denote this relation by $\s = P(\s')$.

We denote by $|\s|$ the number of elements of $\mathcal{R}$ belonging to $\s$. We will abuse the language by saying that $\s$ is \textit{odd} (resp.\ \textit{even}) if $|\s|$ is odd (resp.\ even).  Also, if $|\s|>1$ we  say that $\s$ is \textit{non-trivial}, and in such a case we can define its \textit{depth} $d_\s$ to be
\begin{equation}
    d_\s=\min \{v(r-r') : r,r'\in \s\}.
\end{equation}

\begin{notation}
In the cluster picture of $C(z,s)$, we denote by \smash{\raise4pt\hbox{\clusterpicture\Root[A]{1}{first}{r1};\endclusterpicture}} each element of $\mathcal{R}$, while clusters are represented by an oval, along with its depth, as the following example for $r=3$: 
\begin{equation*}
    \clusterpicture           
    \Root[A] {1} {first} {r1};
    \Root[A] {1} {r1} {r2};
    \Root[A] {1} {r2} {r3};
    \ClusterLDName c[][d_\mathcal{R}][] = (r1)(r2)(r3);
    \endclusterpicture
\end{equation*}
\end{notation}

\begin{proposition}\label{prop:clustersQ}
Let $q$ be an odd prime of bad reduction for $C/\mathbb{Q}_q$ dividing $z$ and $s$ and satisfying the hypotheses \ref{item1} and \ref{item2}. Then, the cluster picture of $C/\Q_q$ is
\begin{equation*}
    \clusterpicture            
    \Root[A] {1} {first} {r1};
    \Root[A] {1} {r1} {r2};
    \Root[A] {1} {r2} {r3};
    \Root[Dot] {3} {r3} {r4};
    \Root[Dot] {} {r4} {r5};
    \Root[Dot] {} {r5} {r6};
    \Root[A] {3} {r6} {r7};
    \Root[A] {1} {r7} {r8};
    \Root[A] {1} {r8} {r9};
    \ClusterLDName c[][d_\mathcal{R}][] = (r1)(r2)(r3)(r4)(r5)(r6)(r7)(r8)(r9);
    \endclusterpicture, 
    \text{ where }d_{\mathcal{R}}=\begin{cases}
        \frac{1}{2} & \text{ if } q\neq r,\\
        \frac{1}{r-1}+\frac{1}{2} & \text{ if } q= r,
    \end{cases}
\end{equation*}
\end{proposition}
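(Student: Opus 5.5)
The plan is to compute all pairwise differences of roots directly, using the explicit description $\gamma_j=\zeta_r^j\alpha_0+\zeta_r^{-j}\beta_0$ and the valuations in Lemma \ref{lemma:valuations}. The claimed picture is a single cluster $\mathcal{R}$ of depth $d_{\mathcal{R}}$ with no proper non-trivial subclusters. It therefore suffices to show that $v_q(\gamma_i-\gamma_j)$ takes the same value $d_{\mathcal{R}}$ for every pair $i\neq j$. Once the pairwise distances are all equal, any disc containing two roots contains all of them, so the only clusters are the singletons and $\mathcal{R}$ itself.

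For $i\neq j$ we factor
\begin{equation*}
\gamma_i-\gamma_j=(\zeta_r^i-\zeta_r^j)\alpha_0+(\zeta_r^{-i}-\zeta_r^{-j})\beta_0=(\zeta_r^i-\zeta_r^j)\bigl(\alpha_0-\zeta_r^{-i-j}\beta_0\bigr),
\end{equation*}
using $\zeta_r^{-i}-\zeta_r^{-j}=-\zeta_r^{-i-j}(\zeta_r^i-\zeta_r^j)$. By Lemma \ref{lemma:valuations}, $v_q(\alpha_0-\zeta_r^{k}\beta_0)=\tfrac12$ for every $k$, so the second factor always has valuation $\tfrac12$. The first factor is $\zeta_r^j(\zeta_r^{i-j}-1)$, which is a unit if $q\neq r$. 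If $q=r$, it is a uniformizer of $\Q_r(\zeta_r)$, with valuation $\tfrac1{r-1}$ in the normalization $v_q(q)=1$. Hence $v_q(\gamma_i-\gamma_j)=\tfrac12$ when $q\ne r$, and $v_q(\gamma_i-\gamma_j)=\tfrac1{r-1}+\tfrac12$ when $q=r$, independently of the pair.

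Finally, I will record that $\mathcal{R}$ consists of $r$ distinct roots, and that the depth is by definition this common minimum. Distinctness follows since the discriminant is nonzero, equivalently $\alpha_0\neq\zeta_r^k\beta_0$ by the lemma. The cluster picture of $C/\Q_q$ depends only on these valuations, not on the field of definition of the roots, so the picture is as stated.

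The only delicate point is the factorization step: checking that the second factor has exactly the shape $\alpha_0-\zeta_r^k\beta_0$ covered by Lemma \ref{lemma:valuations}, so that no cancellation can push the valuation higher. Everything else is routine.
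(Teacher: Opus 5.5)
Your proposal is correct and matches the paper's proof: it uses the same factorization $\gamma_i-\gamma_j=(\zeta_r^i-\zeta_r^j)(\alpha_0-\zeta_r^{-i-j}\beta_0)$, which the paper cites from [Lemma 2.5] of the earlier conductor paper. As in the paper, you then combine Lemma \ref{lemma:valuations} with the valuation of $1-\zeta_r$ to show that all pairwise distances equal $d_{\mathcal{R}}$.
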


\begin{proof}  
By \cite[Lemma 2.5]{computationconductor} and Lemma \ref{lemma:valuations}, we have that for all ${0\le k < j \le r-1}$,
\begin{equation}
    v_q(\gamma_k-\gamma_j)=v_q(\zeta_r^k(1-\zeta_r^{j-k})(\alpha_0-\zeta_r^{-j-k}\beta_0))=
    \begin{cases}
    \frac{1}{2} & \text{ if } q\neq r,\\
    \frac{1}{r-1} + \frac{1}{2} & \text{ if } q=r,
    \end{cases}
\end{equation}
giving the desired cluster.
\end{proof}

\begin{proposition}\label{prop:clusteroverK}
Let $q$ be an odd rational prime, let $\id{q}$ be a prime of $K$ above $q$ of bad reduction for $C/K$, and let $\id{r}$ be the unique prime ideal of $K$ over $r$. Then, the cluster picture of $C/{K_\id{q}}$ is
\begin{equation*}
    \clusterpicture            
    \Root[A] {1} {first} {r1};
    \Root[A] {1} {r1} {r2};
        \Root[A] {1} {r2} {r3};
    \Root[Dot] {3} {r3} {r4};
    \Root[Dot] {} {r4} {r5};
    \Root[Dot] {} {r5} {r6};
    \Root[A] {3} {r6} {r7};
        \Root[A] {1} {r7} {r8};
        \Root[A] {1} {r8} {r9};
    \ClusterLDName c[][d_\mathcal{R}][] = (r1)(r2)(r3)(r4)(r5)(r6)(r7)(r8)(r9);
    \endclusterpicture, 
    \text{ where }
    d_{\mathcal{R}}=
    \begin{cases}
        \frac{1}{2} & \text{ if } q\neq r,\\
        \frac{r-1}{4}+\frac{1}{2} & \text{ if } q= r,
    \end{cases}
\end{equation*}
\end{proposition}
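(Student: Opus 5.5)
The plan is to reduce to the computation already carried out over $\Q_q$ in Proposition \ref{prop:clustersQ}. The main point is to keep track of how the normalisation of the valuation changes when we pass to $K_{\id{q}}$.

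The roots are $\gamma_j=\zeta_r^j\alpha_0+\zeta_r^{-j}\beta_0$, and the set $\mathcal{R}$ does not depend on the base field. Only the valuation used to measure depths changes, namely $v_{\id{q}}=e_{K_\id{q}/\Q_q}\,v_q$. So first I will write, as in the proof of Proposition \ref{prop:clustersQ},
\begin{equation*}
\gamma_k-\gamma_j=\zeta_r^k(1-\zeta_r^{j-k})(\alpha_0-\zeta_r^{-j-k}\beta_0),
\end{equation*}
using \cite[Lemma 2.5]{computationconductor}. Lemma \ref{lemma:valuations} gives $v_q(\alpha_0-\zeta_r^{-j-k}\beta_0)=1/2$ for all indices. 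Since $\zeta_r^k$ is a unit, $v_q(\gamma_k-\gamma_j)=v_q(1-\zeta_r^{j-k})+\tfrac12$.

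Next I compute $v_q(1-\zeta_r^{m})$ for $m\not\equiv 0 \pmod r$. If $q\neq r$ this is a unit, so every difference has $v_q=1/2$. If $q=r$ it equals $1/(r-1)$. In both cases all pairwise distances are equal, so the only non-trivial cluster is $\mathcal{R}$ itself, with all $r$ roots as singleton children. This gives the stated picture.

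Then I convert to the normalisation of $K_{\id{q}}$. If $q\neq r$, then $K/\Q$ is unramified at $q$, so $e=1$ and $d_{\mathcal{R}}=1/2$. If $q=r$, then $\id{r}$ is totally ramified with $e_{K_\id{r}/\Q_r}=(r-1)/2$, so
\begin{equation*}
d_{\mathcal{R}}=\tfrac{r-1}{2}\Big(\tfrac{1}{r-1}+\tfrac12\Big)=\tfrac12+\tfrac{r-1}{4},
\end{equation*}
which matches the statement.

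The main obstacle is making sure the valuation conventions are consistent. Cluster depths here are measured with the valuation normalised on $K_{\id{q}}$, i.e.\ $v_\id{q}(q)=e$, so the factor $e$ multiplies both summands. A secondary point is to check that Lemma \ref{lemma:valuations} applies: $q$ divides $z$ and $s$ and satisfies \ref{item1}--\ref{item2}, which is implicitly assumed since $\id{q}$ is a bad prime lying in this setting.
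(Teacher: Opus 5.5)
Your proposal is correct and follows essentially the paper's argument. The paper invokes Proposition \ref{prop:clustersQ} and rescales the depth by the ramification index $e_{K_{\id{q}}/\Q_q}$, which is $1$ for $q\neq r$ and $(r-1)/2$ for $q=r$; you re-derive the $\Q_q$ root distances before rescaling, which is the same computation.
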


\begin{proof}
Since $K$ is a subfield of $\Q(\zeta_r)$, it is unramified at any prime $q \neq r$ and totally ramified at $q = r$. Since the degree of the extension $K/\Q$ is ${(r-1)}/{2}$, it follows that, for any $a \in K$, 
\begin{align}\label{eq:idealvaluation}
v_{\id{q}}(a)=
\begin{cases}
v_q(a) & \text{ if } \id{q} \neq \id{r}, \\
\frac{r-1}{2}v_r(a) & \text{ if } \id{q} = \id{r}.
\end{cases} 
\end{align}
Then, the desired result is a direct consequence of Proposition \ref{prop:clustersQ}.
\end{proof}

\subsubsection{A criterion for irreducibility}
Once we have the cluster pictures, the next step is to find a criterion for determining whether $F(x)$ is irreducible, since the one given in \cite[Lemma 4.4]{computationconductor} is no longer valid. This is the content of the following lemma.

\begin{lemma}\label{lemma:irreducibility}
Let $q$ be an odd prime dividing $z$ and $s$ and satisfying \ref{item1} and \ref{item2}. Then, $F(x)$ is irreducible over $\Q_q$ if, and only if, $q = r$ and $v_r(s) = (r+1)/2$.
\end{lemma}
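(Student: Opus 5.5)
The plan is to decide irreducibility from the valuations of the individual roots $\gamma_j$, using the fact that all pairwise differences of roots share a common valuation. By Proposition \ref{prop:clustersQ}, every difference $\gamma_j-\gamma_k$ with $j\neq k$ has the same valuation $d:=d_{\mathcal R}$. Here $d=\tfrac12$ if $q\neq r$, and $d=\tfrac12+\tfrac{1}{r-1}$ if $q=r$. Since $F$ is monic of odd degree $r$ with constant term $s$, we have $\prod_j\gamma_j=-s$, hence
\begin{equation*}
    \sum_{j=0}^{r-1}v_q(\gamma_j)=v_q(s).
\end{equation*}
The basic dichotomy is an ultrametric one. If some root $\gamma_k$ has $v_q(\gamma_k)<d$, then $v_q(\gamma_j)=v_q(\gamma_k)$ for every $j$, since $\gamma_j-\gamma_k$ has valuation $d$. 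If instead some $\gamma_k$ has $v_q(\gamma_k)>d$, then every other root satisfies $v_q(\gamma_j)=d$. So at most one root can have valuation strictly larger than $d$.

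\emph{Reducible cases.} I would first compare $v_q(s)$ with $rd$.
\begin{itemize}
    \item If $q\neq r$, hypothesis \ref{item2} gives $v_q(s)\ge\tfrac{r+1}{2}>\tfrac r2=rd$.
    \item If $q=r$ and $v_r(s)\ge\tfrac{r+3}{2}$, then $v_r(s)\ge \tfrac r2+\tfrac32>\tfrac r2+\tfrac{r}{r-1}=rd$, because $\tfrac{r}{r-1}\le\tfrac54$ for $r\ge5$.
\end{itemize}
In both cases not all roots can have valuation $\le d$, since then $\sum_j v_q(\gamma_j)\le rd<v_q(s)$. Hence exactly one root $\gamma_k$ has valuation $>d$, namely $v_q(s)-(r-1)d$, and all others have valuation $d$. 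Galois conjugation over $\Q_q$ preserves valuations and permutes the roots, so it must fix $\gamma_k$. Therefore $\gamma_k\in\Q_q$ and $F$ has a linear factor over $\Q_q$, so it is reducible.

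\emph{Irreducible case.} Suppose $q=r$ and $v_r(s)=\tfrac{r+1}{2}$. The average valuation of the roots is
\begin{equation*}
    \frac{r+1}{2r}=\frac12+\frac1{2r}<\frac12+\frac1{r-1}=d.
\end{equation*}
Some root therefore has valuation below $d$, and by the dichotomy all roots then share a single valuation, namely $\tfrac{r+1}{2r}$. Take any irreducible factor of $F$ over $\Q_r$ and one of its roots $\gamma$. The ramification index of $\Q_r(\gamma)/\Q_r$ must make $\tfrac{r+1}{2r}$ a value of the normalized valuation. Since $\gcd(r+1,2r)=2$, this value has reduced denominator $r$, so the ramification index is divisible by $r$. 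Hence $[\Q_r(\gamma):\Q_r]\ge r=\deg F$, and $F$ is irreducible.

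The main point requiring care is the input from Proposition \ref{prop:clustersQ}: all pairwise differences must have exactly valuation $d$, and not merely at least $d$. The rest is the numerical inequality $v_q(s)>rd$ in the reducible cases, which holds for $r\ge5$ (the paper's standing assumption in this section), and the valuation-fixed-by-Galois argument.
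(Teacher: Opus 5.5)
Your proof is correct, but it takes a different route from the paper's.

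\textbf{The paper's argument.} It works from the coefficients of $F$.
\begin{itemize}
\item For $q\neq r$, and for $q=r$ with $v_r(s)\ge\frac{r+3}{2}$, it checks that the point $(1,v_q(rz^{(r-1)/2}))$ lies strictly below the chord from $(0,v_q(s))$ to $(r,0)$. The Newton polygon therefore has at least two slopes, so $F$ factors. This is where $r>3$ is used, just as $r\ge 5$ enters your inequality $\frac{r}{r-1}<\frac32$.
\item For $q=r$ and $v_r(s)=\frac{r+1}{2}$, it invokes \cite[Proposition 4.1]{computationconductor} to reduce reducibility to the existence of a root $\gamma\in\Q_r$. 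It excludes such a root by a congruence: $\gamma$ would satisfy $v_r(\gamma)\ge 1$, whence $F(\gamma)\equiv s\not\equiv 0\pmod{r^{(r+3)/2}}$.
\end{itemize}

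\textbf{Your argument.} You work with the roots themselves. You combine the exact equidistance $v_q(\gamma_j-\gamma_k)=d$ from Proposition \ref{prop:clustersQ} with $\sum_j v_q(\gamma_j)=v_q(s)$. The proof of Proposition \ref{prop:clustersQ} rests only on Lemma \ref{lemma:valuations}, so it is available under the present hypotheses.

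\textbf{What your route buys.} It avoids the external Proposition 4.1, and it gives more information.
\begin{itemize}
\item In the reducible case it exhibits the root in $\Q_q$ explicitly, as the unique root of valuation $v_q(s)-(r-1)d$. This is exactly the input that Proposition \ref{prop:ramification} later takes from \cite[Proposition 4.1]{computationconductor}.
\item In the irreducible case it shows that every root has valuation $\frac{r+1}{2r}$. This gives at once the total ramification of $\Q_r(\gamma_i)/\Q_r$ asserted in Lemma \ref{lemma:auxramification}, whose proof in the paper only sketches the bound $0<v_r(\gamma_i)<1$.
\end{itemize}

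\textbf{What the paper's route buys.} Its reducibility half uses only the valuations of three coefficients. It therefore needs no information about differences of roots and is independent of the cluster computation.
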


\begin{proof}
Let us first consider the case $q=r$. Recall that, from \ref{item2}, we have $v_r(s)\ge (r+1)/2$. 
    
Suppose first that $v_r(s) \ge (r+3)/2$. In order to show that $F(x)$ is reducible, it suffices to show that the Newton polygon of $F(x)$ has at least two segments, by \cite[Chapter 6, Theorem 3.1]{Cassels}. Note that 
\begin{equation}
    F(x) = x^r + \dots \pm rz^{\frac{r-1}{2}}x + s.
\end{equation}
Consider the three points $P_0 = (0,v_r(s))$, $P_1 = (1, v_r(rz^{\frac{r-1}{2}})) = (1, \frac{r+1}{2})$ and $P_r = (r, 0)$. Let $m_1$ denote the slope of the segment connecting $P_0$ and $P_1$ and $m_r$ the slope of the segment connecting $P_0$ and $P_r$. It is elementary to see that 
\begin{equation}\label{eqn:slopes}
    m_1 = \frac{r+1}{2}-v_r(s) \quad \text{and} \quad m_r = -\frac{v_r(s)}{r}.
\end{equation}
It is clear that if $m_1 < m_r$, the resulting Newton polygon will have at least two edges. It is immediate to see that 
\begin{equation}\label{eqn:inequality}
    m_1 < m_r \quad \text{ if and only if } \quad  v_r(s) > \frac{r(r+1)}{2(r-1)}.
\end{equation}
Note that we had assumed that $v_r(s) \ge (r+3)/2$. Now, we note that 
\begin{equation}
    \frac{r+3}{2} > \frac{r(r+1)}{2(r-1)} \quad \text{ if and only if } \quad r > 3,
\end{equation}
and so \eqref{eqn:inequality} holds. Consequently, $F(x)$ is reducible.

Secondly, assume that $q=r$ and that $v_r(s) = (r+1)/2$. By \cite[Proposition 4.1]{computationconductor}, which still applies to our case, $F(x)$ is reducible if, and only if, it has a root $\gamma_{k_0} \in \Q_r$. If this were the case, we note that 
\begin{equation}
    0\equiv F(\gamma_{k_0}) \equiv \gamma_{k_0}^r \pmod{r},
\end{equation}
so that $v_r(\gamma_{k_0}) \ge 1$. Now, let us write $F(x)$ in the following manner:
\begin{equation}
    F(x) = x^r + a_{r-2}zx^{r-2} + a_{r-4}z^2x^{r-4} + \dots \pm rz^\frac{r-1}{2}x + s.
\end{equation}
Since $v_r(\gamma_{k_0}) \ge 1$ and $v_r(r) = v_r(z) = 1$, we have that
\begin{equation}
    F(\gamma_{k_0}) \equiv s \not \equiv 0 \pmod{r^{\frac{r+3}{2}}},
\end{equation}
where the last congruence follows since $v_r(s) = (r+1)/2$. Thus, $F(x)$ has no root over $\Q_r$ and is therefore irreducible.

If $q \neq r$, we consider $P_1 = (1, \frac{r-1}{2})$ and we adapt \eqref{eqn:slopes} in the following way:
\begin{equation}
    m_1 = \frac{r-1}{2}-v_q(s) \quad \text{and} \quad m_r = -\frac{v_q(s)}{r}.
\end{equation}
Then, we show that 
\begin{equation}
    m_1 < m_r \quad \text{ if and only if } \quad v_q(s) > \frac{r}{2}, 
\end{equation}
which is true by \ref{item2}. Consequently, $F(x)$ is reducible, as desired.
\end{proof}

\subsubsection{Index of ramification}
Let $L$ be the splitting field of $F(x)$. As the final ingredient for computing the conductor of $C(z,s)$, we need to compute the ramification index of $L/\Q_q$. For this, we need the following lemma.

\begin{lemma}\label{lemma:auxramification}
Let $q,z,s \in \ZZ$ with $q$ prime and satisfying \ref{item1}, and such that $v_r(s) = (r+1)/2$. Then, the extension $\Q_r(\gamma_i)/\Q_r$ is totally ramified of degree $r$ for any root $\gamma_i$ of $F(x)$.
\end{lemma}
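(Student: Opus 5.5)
The plan is to read off the valuation of a root from the Newton polygon of $F(x)$ over $\Q_r$ and show that its denominator is $r$. This forces the ramification index of $\Q_r(\gamma_i)/\Q_r$ to be divisible by $r$. Since the hypothesis $v_r(s)=(r+1)/2$ means we are in the case $q=r$, and since $v_q(s)\ge (r+1)/2$ as in \ref{item2}, Lemma \ref{lemma:irreducibility} applies and says $F(x)$ is irreducible over $\Q_r$. Hence $[\Q_r(\gamma_i):\Q_r]=r$, and divisibility of the ramification index by $r$ gives total ramification of degree $r$.

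\textbf{Shape of the coefficients.} Write
\begin{equation*}
F(x)=x^r+\sum_{k=1}^{(r-1)/2} c_k z^k x^{r-2k}+s ,
\end{equation*}
where $x\,h(2-x^2)$ is, up to sign, the specialization $z=1$, $s=0$ of $F$, and the $c_k\in\Z$ are its coefficients. The key claim is that $r\mid c_k$ for every $1\le k\le (r-1)/2$. Indeed, the roots of $h(2-x^2)$ are the elements $\pm(\zeta_r^j-\zeta_r^{-j})$, because $(\zeta_r^j-\zeta_r^{-j})^2=\zeta_r^{2j}+\zeta_r^{-2j}-2$. All of these have positive $r$-adic valuation $1/(r-1)$. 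So $x\,h(2-x^2)$ reduces to $x^r$ modulo $r$, and all its non-leading coefficients are divisible by $r$. For $k=(r-1)/2$ this is consistent with the term $\pm r z^{(r-1)/2}x$ already used in the proof of Lemma \ref{lemma:irreducibility}. Using $v_r(z)=1$, it follows that $v_r(c_k z^k)\ge k+1$.

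\textbf{Newton polygon.} Join $P_0=(0,(r+1)/2)$ to $P_r=(r,0)$; this segment has slope $-(r+1)/(2r)$. At abscissa $r-2k$ it lies at height $k(r+1)/r=k+k/r$. This is strictly less than $k+1$ because $k<r$. So every intermediate point lies strictly above the segment, and the Newton polygon is this single segment. Therefore every root satisfies
\begin{equation*}
v_r(\gamma_i)=\frac{r+1}{2r}=\frac{(r+1)/2}{r}.
\end{equation*}
Since $\gcd((r+1)/2,r)=1$, this value has exact denominator $r$. Hence $r$ divides the ramification index of $\Q_r(\gamma_i)/\Q_r$, which together with the degree count above completes the argument.

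The main obstacle I expect is justifying cleanly that $r$ divides all the intermediate coefficients of $x\,h(2-x^2)$. My route is the root computation above, namely that every root has valuation $1/(r-1)>0$, so the polynomial is congruent to $x^r$ modulo $r$. Alternatively, one could cite the explicit coefficient formula from \cite{computationconductor}.
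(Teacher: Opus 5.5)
Your proof is correct, but it takes a different route from the paper's.

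\textbf{The paper's route.} The paper gets the degree $r$ from Lemma~\ref{lemma:irreducibility}. It then reuses the congruence from that proof only to obtain $0<v_r(\gamma_i)<1$, so the extension is ramified. Since $e\cdot f=r$ with $r$ prime, this forces $e=r$. This route needs almost nothing about the coefficients of $F$:
\begin{itemize}
\item positivity of $v_r(\gamma_i)$ uses only $r\mid z$ and $r\mid s$;
\item the bound $v_r(\gamma_i)<1$ uses only that the coefficient of $x$ is $\pm r z^{(r-1)/2}$. Indeed, if $v_r(\gamma)\ge 1$, the other middle terms already have valuation at least $r-k\ge (r+3)/2$.
\end{itemize}

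\textbf{Your route.} Your Newton polygon argument needs a stronger input, namely $r\mid c_k$ for every $k$. Without it, a point $(r-2k,k)$ would lie below your segment. In exchange it gives more. You get the exact valuation $v_r(\gamma_i)=(r+1)/(2r)$. You also make the appeal to Lemma~\ref{lemma:irreducibility} unnecessary: $r\mid e\le[\Q_r(\gamma_i):\Q_r]\le \deg F=r$ already gives $e=r$. So you obtain irreducibility of $F$ as a by-product, in the spirit of a generalized Eisenstein criterion, rather than assuming it.

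\textbf{One slip to fix.} The elements $\pm(\zeta_r^j-\zeta_r^{-j})$ are the roots of $h(2+x^2)$, not of $h(2-x^2)$. The identity you quote gives $2+x^2=\zeta_r^{2j}+\zeta_r^{-2j}$. The roots of $h(2-x^2)$ are $\pm\sqrt{-1}\,(\zeta_r^j-\zeta_r^{-j})$. These have the same valuation $1/(r-1)$, so your conclusion $x\,h(2-x^2)\equiv \pm x^r \pmod r$ still holds. A cleaner justification is $h(x)\equiv (x-2)^{(r-1)/2}\pmod r$, since every conjugate of $\omega$ is congruent to $2$ modulo $\id{r}$.
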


\begin{proof}
If $v_r(s) = (r+1)/2$, the polynomial $F(x)$ is irreducible over $\Q_r$ by Lemma \ref{lemma:irreducibility}, and so the extension $\Q_r(\gamma_i)/\Q_r$ has degree $r$. By an identical argument to the proof of Lemma \ref{lemma:irreducibility}, we then show that $0 < v_r(\gamma_i) < 1$. This is only possible if the extension $\Q_r(\gamma_i)/\Q_r$ is ramified. Since $r$ is a prime, it follows that the extension is necessarily totally ramified.
\end{proof}

The following is an adaptation of  \cite[Lemma 4.10]{computationconductor}. As in Section \ref{sec:notation}, let $\omega = \zeta_r + \zeta_r^{-1}$ and  set $\tau = \zeta_r-\zeta_r^{-1}$. Denote $E:=\Q_q(\omega,\tau\sqrt{\Delta}) =\Q_q(\tau\sqrt{\Delta})$, where the equality holds by \cite[Remark 4.8]{computationconductor}.

\begin{lemma} \label{lemma:indexram}
Let $q$ be an odd prime and let $z,s \in \ZZ$ be integers satisfying \ref{item1} and \ref{item2}. The ramification index  of $E/\Q_q(\omega)$ equals
\begin{equation}
    e_{E/\Q_q(\omega)}=
    \begin{cases}
        1 & \text{if } q=r \text{ and } r\equiv 3\pmod 4,\\
        2 & \text{otherwise}.
    \end{cases}
\end{equation}
\end{lemma}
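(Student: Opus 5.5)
The plan is to reduce the computation to the parity of a valuation. Since $\tau^2 = \omega^2-4 \in \Q(\omega)$, we have $E = \Q_q(\omega)\bigl(\sqrt{\tau^2\Delta}\bigr)$, so $E/\Q_q(\omega)$ has degree at most $2$. Because $q$ is odd, the ramification of such an extension is governed only by the valuation of the radicand. Let $v$ be the normalized valuation of $\Q_q(\omega)$. If $v(\tau^2\Delta)$ is odd, the extension is ramified and $e=2$. If it is even, then $\tau^2\Delta = \pi^{2k} u$ with $u$ a unit, and $\Q_q(\omega)(\sqrt{u})$ is unramified (possibly trivial), so $e=1$. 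It therefore suffices to compute $v(\tau^2\Delta)$.

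First I record the valuations. By Lemma~\ref{lemma:valuations} (which uses \ref{item1} and \ref{item2}), $v_q(\sqrt{\Delta}) = r/2$, so $v_q(\Delta) = r$, which is odd. For $\tau$ the argument splits according to $q$.

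\textbf{Case $q \neq r$.} The element $\zeta_r$ has order $r$ coprime to $q$, and $\zeta_r^2 \neq 1$ in the residue field, so $\tau = \zeta_r^{-1}(\zeta_r^2-1)$ is a unit. The extension $\Q_q(\omega)/\Q_q$ is unramified, so $v = v_q$ on it. Hence $v(\tau^2\Delta) = r$, which is odd, and $e_{E/\Q_q(\omega)} = 2$.

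\textbf{Case $q = r$.} Here $\Q_r(\zeta_r)/\Q_r$ is totally ramified of degree $r-1$ and $1-\zeta_r^2$ is a uniformizer, so $v_r(\tau) = 1/(r-1)$ and $v_r(\tau^2) = 2/(r-1)$. The field $\Q_r(\omega)$ is totally ramified of degree $(r-1)/2$, so, as in \eqref{eq:idealvaluation},
\[
v(\tau^2\Delta) = \frac{r-1}{2}\Bigl(r + \frac{2}{r-1}\Bigr) = \frac{r(r-1)}{2} + 1 .
\]
If $r \equiv 3 \pmod 4$, then $(r-1)/2$ is odd, so $r(r-1)/2$ is odd and $v(\tau^2\Delta)$ is even, giving $e=1$. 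If $r \equiv 1 \pmod 4$, then $r(r-1)/2$ is even, so $v(\tau^2\Delta)$ is odd, giving $e=2$.

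Together the two cases give exactly the stated formula. The only delicate point is keeping the normalizations of $v_q$, $v_{\id{q}}$ and the valuation on $\Q_r(\omega)$ consistent; none of the steps is a serious obstacle.
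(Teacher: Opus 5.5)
Your proposal is correct and follows essentially the same route as the paper: bound the degree by $2$ using $x^2-\tau^2\Delta$, invoke the odd-residue-characteristic criterion (a quadratic extension is unramified iff the radicand has even valuation), and compute the parity of the valuation of $\tau^2\Delta$ from $v_q(\Delta)=r$ and $v(\omega^2-4)=\delta_{q,r}$. Your write-up simply makes explicit the valuation of $\tau$ and the parity computation $\frac{r(r-1)}{2}+1$ that the paper leaves to the reader.
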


\begin{proof}
The minimal polynomial of $E$ over $\Q_q(\omega)$ divides
\begin{equation}\label{eqn:definingpoly}
    x^2 - \Delta\tau^2 = x^2 + \Delta(4 - \omega^2) \in \Q_q(\omega)[x].
\end{equation}
In particular, this implies that $e_{E/\Q_q(\omega)}\le 2$. Moreover, by \cite[Theorem 6.1]{Cassels},  $E/\Q_q(\omega)$ will be unramified if and only if $v_\mathfrak{q}(-4\Delta(\omega^2-4))$ is even, where $\id{q}$ is the unique prime in $\Q_q(\omega)$ above $q$, and $v_\id{q}$ its respective valuation. Since  $v_q(\Delta) = r$ (see Lemma \ref{lemma:valuations}), the result follows from (\ref{eq:idealvaluation}) and the fact that $v_\id{q}(\omega^2-4)=\delta_{q,r}$, where $\delta$ is the Kronecker delta.
\end{proof}

\begin{proposition}
    \label{prop:ramification}
The ramification index $e_{L/\Q_q}$ of $L/\Q_q$ is equal to 
\begin{equation}\label{eq:ramindex}
    e_{L/\Q_q}=\varepsilon_1\varepsilon_2e_{E/\Q_q(\omega)},
\end{equation}
where
\begin{equation*}
    \varepsilon_1=\begin{cases}
    \frac{r-1}{2} & \text{if } q=r,\\
    1  & \text{if } q\neq r
\end{cases}
\quad \text{ and } \quad 
\varepsilon_2=\begin{cases}
r & \text{if } q=r \text{ and } v_r(s) = \frac{r+1}{2},\\
1  & \text{otherwise.}
\end{cases}
\end{equation*}
\end{proposition}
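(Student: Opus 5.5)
We compute $e_{L/\Q_q}$ through the tower $\Q_q\subseteq \Q_q(\omega)\subseteq E\subseteq L$, so that
\begin{equation*}
e_{L/\Q_q}=e_{\Q_q(\omega)/\Q_q}\, e_{E/\Q_q(\omega)}\, e_{L/E}.
\end{equation*}
The first factor is $\varepsilon_1$, since $\Q(\omega)=K$ is unramified at $q\neq r$ and totally ramified of degree $(r-1)/2$ at $r$. The middle factor is given by Lemma \ref{lemma:indexram}. The whole argument therefore reduces to showing $e_{L/E}=\varepsilon_2$. To do this we follow the structure of \cite[Lemma 4.10 and Proposition 4.11]{computationconductor}.

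First we identify $L$ over $E$. We will show that $L=E(\gamma_0)$ and that $L/E$ has degree $1$ or $r$. Since $\gamma_j=\zeta_r^j\alpha_0+\zeta_r^{-j}\beta_0$ and $\alpha_0\beta_0=z$ (from $\alpha_0^r\beta_0^r=(s^2-\Delta)/4=z^r$, with the roots normalised so that $\alpha_0\beta_0=z$), every $\gamma_j$ is a rational expression in $\gamma_0$, $\omega$ and $\tau(\alpha_0-\beta_0)$. The key identity is $(\alpha_0-\beta_0)$ versus $\sqrt{\Delta}$ up to elements of $\Q_q(\omega,\gamma_0)$, exactly as in \cite[Remark 4.8]{computationconductor}. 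This gives $L=E(\gamma_0)$, and $L/E$ is a Kummer-type cyclic extension of degree dividing $r$, being generated by an $r$-th root $\alpha_0$ over $E(\zeta_r)$, up to the quadratic twist handled by $E$. Hence $e_{L/E}\in\{1,r\}$. The same $\Q_q(\omega)$-rationality arguments used in loc.\ cit.\ should carry over, because they depend only on the shape of $F$ and not on hypotheses (i),(ii) of \cite{computationconductor}.

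Next we treat the case $q\neq r$. Here $r$ is prime to $q$ and $r>e$, so any ramification in $L/E$ of degree $r$ would be tame. We will show that $L/E$ is unramified using Lemma \ref{lemma:valuations}. Write $\alpha_0=\sqrt{\cdot}$-scaled as $\alpha_0=\pi^{1/2}u$, where $\pi^{1/2}$ lies in $E$ (or in an unramified extension of $E$) because $v_q(\sqrt\Delta)=r/2$ and $v_q(s)\geq (r+1)/2$. Then $(s+\sqrt\Delta)/2=\pi^{r/2}\cdot w$ with $w$ a unit. Extracting an $r$-th root of a unit with $r\neq q$ is unramified, so $e_{L/E}=1=\varepsilon_2$.

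Finally we treat the case $q=r$. If $v_r(s)=(r+1)/2$, Lemma \ref{lemma:auxramification} says that $\Q_r(\gamma_0)/\Q_r$ is totally ramified of degree $r$. Since $e_{E/\Q_r}$ divides $r-1$, which is coprime to $r$, we get $r\mid e_{L/\Q_r}$ and hence $e_{L/E}=r$. If instead $v_r(s)\geq (r+3)/2$, Lemma \ref{lemma:irreducibility} and its Newton polygon show that $F$ has a root in $\Q_r$ together with a factor of degree $r-1$. The splitting field then has degree prime to $r$ over $E$ once $E$ contains the relevant data, which forces $e_{L/E}=1$. We expect this last subcase to be the main obstacle: we must check that no wild $r$-part appears. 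We would verify this by showing that the Newton polygon segment of length $r-1$ and slope $-(v_r(s)-\frac{r+1}{2})/1$ against the horizontal part yields roots whose pairwise differences have valuation $d_\mathcal{R}$. These differences are already generated over $E$, using Proposition \ref{prop:clustersQ} together with Krasner's lemma.
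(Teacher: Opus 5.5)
Your proposal is correct in substance and uses the same tower as the paper, but it obtains $e_{L/E}=\varepsilon_2$ partly by a different route. The paper treats both reducible cases together ($q\neq r$, and $q=r$ with $v_r(s)\ge\frac{r+3}{2}$): there is a root $\gamma_{k_0}\in\Q_q$, and \cite[Proposition 4.7]{computationconductor} then gives $L=E$. In the irreducible case it imports $e_{\Q_q(\alpha_0)/\Q_q(\alpha_0^r)}=r$ from \cite[Corollary 4.11]{computationconductor} and runs two towers through $\Q_q(\zeta_r,\alpha_0)$.

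Your treatment of the irreducible case is shorter and self-contained. Lemma \ref{lemma:auxramification} gives $r\mid e_{L/\Q_r}$, and $e_{E/\Q_r}\mid r-1$, so $r\mid e_{L/E}\le r$. Your Kummer argument for $q\neq r$ is also valid once made precise. Set $\varpi=\sqrt{\Delta}/q^{(r-1)/2}\in E(\zeta_r)$, which has valuation $\frac12$. Then $\alpha_0^r=\varpi^r w$ with $w$ a unit, so $L\subseteq E(\zeta_r,w^{1/r})$, and this field is unramified over $E$.

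There are two points to tighten.

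\textbf{The claim $[L:E]\in\{1,r\}$.} This claim is load-bearing, but your Kummer sketch only gives $[L:E]\mid 2r$ with $[L:E]\le r$, i.e.\ $[L:E]\in\{1,2,r\}$. You can close it as follows. Any $\sigma\in G_E$ fixes $\omega$ and $\tau\sqrt{\Delta}$, so one of two things happens:
\begin{enumerate}
\item $\sigma(\zeta_r)=\zeta_r$ and $\sigma(\sqrt\Delta)=\sqrt\Delta$, which forces $\sigma(\alpha_0)=\zeta_r^k\alpha_0$;
\item $\sigma(\zeta_r)=\zeta_r^{-1}$ and $\sigma(\sqrt\Delta)=-\sqrt\Delta$, which forces $\sigma(\alpha_0)=\zeta_r^k\beta_0$.
\end{enumerate}
Using $\alpha_0\beta_0=z$, in both cases $\sigma$ permutes the $\gamma_j$ by a translation of the index. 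Hence $\Gal(L/E)\hookrightarrow\Z/r\Z$. Alternatively, cite \cite[Proposition 4.7]{computationconductor} as the paper does.

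\textbf{The final subcase.} With the previous point in hand, this subcase is not an obstacle. $L$ is the splitting field over $\Q_r$ of the degree-$(r-1)$ cofactor, so $[L:\Q_r]$ divides $(r-1)!$ and is prime to $r$. Therefore $L=E$. Equivalently, a translation fixing the index of the rational root is trivial.

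The same reasoning also covers $q\neq r$: the Newton polygon in Lemma \ref{lemma:irreducibility} gives a linear factor there too, which is how the paper unifies the two cases.

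You should drop the Krasner/root-difference plan. It is unnecessary, and as sketched it does not show the roots lie in $E$, since valuations of root differences alone do not determine the field the roots generate. Also, the slope you attach to the length-$(r-1)$ segment actually belongs to the length-one segment.
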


\begin{proof}
We claim that $\varepsilon_1:=e_{\Q_q(\omega)/\Q_q}$ and $\varepsilon_2:=e_{L/E}$. Then, it is clear by the tower law that (\ref{eq:ramindex}) holds. To prove the formula for the values of $\varepsilon_1$ note that the extension $\Q_q(\omega)/\Q_q$ is unramified (resp.\ totally ramified of degree $(r-1)/2$) if $q\neq r$ (resp.\ $q=r$).

Suppose $F(x)$ is reducible over $\Q_q$.  By \cite[Proposition 4.1]{computationconductor}, there exists $0\le k_0 \le r-1$ such that $\gamma_{k_0}\in\Q_q$. If $\zeta_r\notin\Q_q$ then $k_0=0$, by \cite[Remark 4.2]{computationconductor}. Then, \cite[Proposition 4.7]{computationconductor} implies that $L=E$. On the other hand, if $\zeta_r\in\Q_q$  we also have that $L=E$ by \cite[Proposition 4.7]{computationconductor}, taking $j=k_0$ in the statement. In either case, $e_{L/E} = 1 = \varepsilon_2$.

Therefore,  it only remains to consider the case where $F(x)$ is irreducible, which, by Lemma \ref{lemma:irreducibility}, corresponds to $q = r$ and $v_r(s) = (r+1)/2$. We then proceed to compute $e_{L/E}$. In this case, by \cite[Corollary 4.11]{computationconductor} we have that $e_{\Q_q(\alpha_0)/\Q_q(\alpha_0^r)}=r$.
Using the chain of field extensions
\begin{equation}
    \Q_q\subseteq\Q_q(\alpha_0^r)\subseteq\Q_q(\alpha_0)
    \subseteq\Q_q(\zeta_r,\alpha_0),
\end{equation}
we get that $r$ divides the ramification index of the extension $\Q_q(\zeta_r,\alpha_0)/\Q_q$. On the other hand, using the chain of field extensions 
\begin{equation}
    \Q_q\subseteq E\subseteq L\subseteq \Q_q(\zeta_r,\alpha_0),
\end{equation}
we get that $r\mid e_{L/E}$, since $r$ is prime and it neither divides the ramification index of $E/\Q_q$ (which is less than or equal to $r-1$) nor that of $\Q_q(\zeta_r,\alpha_0)/L$ (being less than or equal to $2$). Therefore, since $e_{L/E}\le r$, we get that $e_{L/E}=r$. The value of $e_{L/E}$ is then equal in all cases to the value of $\varepsilon_2$ given in the statement of this proposition.
\end{proof}

We recall that our aim in this section is to compute the conductor of the curve. For convenience, we recall the following fact, which is \cite[Lemma 1.6]{computationconductor} (also \cite[Definition 2.2 and Proposition 2.11]{CelineClusters}).

\begin{lemma}\label{lemma:twoparts}
Let $p$ be a prime number, $K$ a local field with $\Q_p \subseteq K$ and let $C/K$ be a hyperelliptic curve of genus $g$ with defining polynomial $f(x)$. Then, the conductor exponent $\mathfrak{n}(C/K)$ decomposes as
\begin{equation}
    \mathfrak{n}(C/K) = \mathfrak{n}_{wild}(C/K) + \mathfrak{n}_{tame}(C/K),
\end{equation}
where $\mathfrak{n}_{wild}(C/K)$ and $\mathfrak{n}_{tame}(C/K)$ are called, respectively, the wild and the tame part of the conductor. In addition, if $L$ is the splitting field of $f(x)$, we have that $\mathfrak{n}_{wild}(C/K) = 0$ if $\gcd(e_{L/K}, p) = 1$.
\end{lemma}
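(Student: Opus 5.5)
The plan is to derive the statement from the standard description of the conductor through the $\ell$-adic Tate module, together with the semistability criterion for hyperelliptic curves in terms of the splitting field of the defining polynomial. Let $J$ be the Jacobian of $C$ and fix a prime $\ell\neq p$. Put $V=V_\ell(J)$ and let $I\subseteq G_K$ be the inertia group. By definition,
\begin{equation*}
\mathfrak{n}(C/K)=\mathfrak{n}(J/K)=\dim V-\dim V^{I}+\mathrm{Sw}(V).
\end{equation*}
We set $\mathfrak{n}_{tame}(C/K):=\dim V-\dim V^{I}$ and $\mathfrak{n}_{wild}(C/K):=\mathrm{Sw}(V)$. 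Both quantities are independent of $\ell$ by Grothendieck's results on the $\ell$-independence of the conductor, so the decomposition itself is formal. The Swan conductor depends only on the action of the wild inertia subgroup $P\subseteq I$, which is pro-$p$. In particular, $\mathrm{Sw}(V)=0$ as soon as $P$ acts trivially on $V$.

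The real content is therefore to show that $P$ acts trivially on $V$ whenever $\gcd(e_{L/K},p)=1$.

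\textbf{Step 1 (semistable extension).} First, I will invoke the semistability criterion of Dokchitser--Dokchitser--Maistret--Morgan, namely the one underlying \cite{BBB} and \cite{CelineClusters}. It says that $C$ is semistable over a finite extension $K'/K$ whenever $K'(\mathcal R)/K'$ has ramification index at most $2$ and suitable valuation conditions on the leading coefficient and on the depths hold. These conditions can always be forced after a further tame extension of degree at most $2$. Take $L'=L(\pi_L^{1/2})$, where $\pi_L$ is a uniformiser of $L$. This field is tamely ramified over $L$ when $p$ is odd, and $C$ (hence $J$) is semistable over $L'$. Moreover $e_{L'/K}$ divides $2e_{L/K}$, so it is coprime to $p$ for odd $p$.

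\textbf{Step 2 (wild inertia lies in $G_{L'}$).} Next, the image of $P$ in $\mathrm{Gal}(L'^{\mathrm{Gal}}/K)$ lies in the inertia subgroup of that finite quotient. That image is a $p$-group, while the inertia group of the Galois closure has order prime to $p$, since all the extensions involved are tamely ramified of degree prime to $p$. Hence $P\subseteq G_{L'}$, and therefore $P\subseteq I_{L'}$.

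\textbf{Step 3 (trivial action).} By Grothendieck's criterion, semistability of $J$ over $L'$ means that $I_{L'}$ acts unipotently on $V$. So $P$ acts through a unipotent subgroup of $\GL(V)$. A compact unipotent subgroup of $\GL_n(\Q_\ell)$ is, after conjugation, contained in a group of upper unitriangular matrices over $\Z_\ell$, which is a pro-$\ell$ group. The continuous image of the pro-$p$ group $P$ in a pro-$\ell$ group with $\ell\neq p$ is trivial. Hence $V^P=V$, so $\mathrm{Sw}(V)=0$, i.e.\ $\mathfrak{n}_{wild}(C/K)=0$.

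The main obstacle is Step 1: applying the semistability criterion carefully, including the auxiliary valuation conditions, and checking that the required auxiliary extension stays tame. For $p=2$ the quadratic extension $L'/L$ may be wild, so the argument above does not directly apply. In that case I would instead fall back on the explicit formula of \cite[Proposition 2.11]{CelineClusters}, in which the wild part is expressed through the Swan conductor of the permutation action of $G_K$ on the roots $\mathcal R$. That Swan conductor visibly vanishes when the inertia action on $\mathcal R$ factors through a group of order prime to $p$. Since the lemma is only applied in the paper at odd primes $q$, the odd case is the essential one.
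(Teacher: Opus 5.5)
\textbf{The case $p=2$.} Your fallback for $p=2$ does not work, and no argument could make it work, because the vanishing assertion is false in residue characteristic $2$.

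\emph{Genus $1$.} Take $y^2=x^3-x$ over $\Q_2$. The polynomial $x(x-1)(x+1)$ splits over $\Q_2$, so $L=K$ and $e_{L/K}=1$. But the curve has conductor $32$ and additive, potentially good reduction ($j=1728$). Hence $\mathfrak{n}_{tame}=2$ and $\mathfrak{n}_{wild}=3$.

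\emph{Genus $2$.} Take $y^2=(x^2-1)(x^2-9)(x^2-17)$. All six roots lie in $\Q_2$. The Jacobian is isogenous to a product with factor $y^2=(u-1)(u-9)(u-17)\cong Y^2=X^3-4X$, which has conductor $64$. So the wild part is at least $4$.

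The root-theoretic formula you want to fall back on, with orbit contributions $v_K(\Delta_{K(\gamma)/K})-[K(\gamma):K]+f_{K(\gamma)/K}$ as used in the proof of Proposition \ref{prop:wildconductor}, holds only in odd residue characteristic. It returns $0$ in both examples. So you should not claim $p=2$ at all. The lemma needs the hypothesis that $p$ is odd, which is the only setting where the paper applies it.

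\textbf{Odd $p$.} Here your argument is correct, and it takes a different route from the paper. The paper gives no proof of its own. It quotes \cite[Lemma 1.6]{computationconductor} and \cite[Definition 2.2 and Proposition 2.11]{CelineClusters}, where the wild part is read off from the permutation action of $G_K$ on the roots. Each orbit contribution above vanishes when $K(\gamma)/K$ is tame, since then $v_K(\Delta_{K(\gamma)/K})=[K(\gamma):K]-f_{K(\gamma)/K}$.

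You instead use the Dokchitser--Dokchitser--Maistret--Morgan semistability criterion to get semistability over the tame extension $L(\pi_L^{1/2})$. Over that field all roots are rational, so the cluster-invariance condition you omitted is automatic. You then use Grothendieck's unipotence criterion and a pro-$p$ versus pro-$\ell$ argument. This is more conceptual and avoids the explicit formula.

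However, the semistability criterion is a heavier input than you need. A shorter argument runs as follows.
\begin{itemize}
\item Since $J[2]\subseteq J(L)$, wild inertia acts trivially on $J[2]$ when $L/K$ is tame.
\item The kernel of $\GL(T_2J)\to\GL(J[2])$ is pro-$2$, while wild inertia is pro-$p$ with $p$ odd.
\item So wild inertia acts trivially on $V_2(J)$, and the Swan conductor computed with $\ell=2$ vanishes.
\end{itemize}
This version also shows exactly where the oddness of $p$ is used.
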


 By Proposition \ref{prop:ramification}, $\gcd(e_{L/\Q_q}, r) = 1$ unless $q=r$ and $v_r(s) = (r+1)/2$. By Lemma \ref{lemma:twoparts}, this will be the most difficult case going forward, since we need to account for the wild part of the conductor.

\subsubsection{Wild conductor}
In this section, we suppose that $q=r$, $v_r(s) = (r+1)/2$ and $v_r(z)=1$, so that we are in the case where $F(x)$ is irreducible, where the wild part of the conductor exponent is non zero. In order to simplify notation, we let $F = \Q_r(\alpha_0^r) = \Q_r(\sqrt{\Delta})$ and $M = \Q_r(\alpha_0)$. 

Finally, we note that, by Lemma \ref{lemma:valuations}, the element 
\begin{equation}
    \pi := \frac{\sqrt{\Delta}}{r^{\frac{r-1}{2}}},
\end{equation}
is a uniformizer of $\cO_F$, since $v_r(\pi) = 1/2 = 1/e_{F/\Q_r}$. Let $v_\pi$ be its associated valuation.

\begin{lemma}\label{lemma:auxcongruence}
Let $\pi$ be as above. Then
\begin{equation}
    \left(\frac{\sqrt{\Delta}}{2\pi^r}\right)^r \equiv \frac{\sqrt{\Delta}}{2\pi^r} \pmod{\pi^2}.
\end{equation}
\end{lemma}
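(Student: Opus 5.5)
The idea is to show that the element $u := \sqrt{\Delta}/(2\pi^r)$ is in fact a unit of $\Z_r$. Since $v_\pi$ restricted to $\Q_r$ equals $2v_r$, the congruence modulo $\pi^2$ then reduces to a congruence modulo $r$ in $\Z_r$, and this follows from Fermat's little theorem. So the proof will be a direct computation followed by that remark.

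First I will rewrite $u$ using only rational quantities. From the definition $\pi = \sqrt{\Delta}/r^{\frac{r-1}{2}}$ we get
\begin{equation*}
\pi^r = \frac{\Delta^{\frac{r-1}{2}}\sqrt{\Delta}}{r^{\frac{r(r-1)}{2}}},
\qquad\text{hence}\qquad
u = \frac{\sqrt{\Delta}}{2\pi^r} = \frac{r^{\frac{r(r-1)}{2}}}{2\,\Delta^{\frac{r-1}{2}}}.
\end{equation*}
Here $\Delta = s^2-4z^r \in \Z$, because $z,s\in\Z$ and $r$ is odd. It follows that $u \in \Q_r$; in fact $u\in\Q$.

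Next I will compute the valuation of $u$. By Lemma \ref{lemma:valuations} we have $v_r(\Delta)=r$. Therefore
\begin{equation*}
v_r(u) = \frac{r(r-1)}{2} - \frac{r(r-1)}{2} = 0,
\end{equation*}
where I also use that $2$ is a unit since $r$ is odd. So $u \in \Z_r^\times$.

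Finally, I will compare valuations. Because $F/\Q_r$ is ramified of degree $2$ (as noted above, $e_{F/\Q_r}=2$), we have $v_\pi(x) = 2v_r(x)$ for every $x \in \Q_r$. Hence the congruence $u^r \equiv u \pmod{\pi^2}$ for $u\in\Z_r$ is equivalent to $u^r\equiv u \pmod{r}$ in $\Z_r$. This holds by Fermat's little theorem, since $\Z_r/r\Z_r=\F_r$.

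The only point requiring care is the first step: recognising that $u$ lies in $\Q_r$ rather than merely in $F$. This is exactly where the specific choice of $\pi$ matters. For a general unit of $F$, the congruence $u^r \equiv u \pmod{\pi^2}$ would fail, because the residue field $\F_r$ only determines $u$ modulo $\pi$.
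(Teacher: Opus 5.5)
Your proposal is correct and is essentially the paper's proof: rewrite $\sqrt{\Delta}/(2\pi^r)$ as $r^{\frac{r(r-1)}{2}}/(2\Delta^{\frac{r-1}{2}})\in\Q_r$, apply Fermat's little theorem modulo $r$, and conclude using $v_\pi(r)=2$. Your explicit check that this element is an $r$-adic unit (via $v_r(\Delta)=r$) supplies the integrality that the paper uses without comment when it invokes Fermat's little theorem.
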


\begin{proof}
By direct computation, we have that 
\begin{equation}
    \frac{\sqrt{\Delta}}{2\pi^r} = \frac{r^{\frac{r(r-1)}{2}}}{2 \Delta^\frac{r-1}{2}} \in \Q_r.
\end{equation}
By Fermat's Little Theorem, we get that 
\begin{equation}
    \left(\frac{\sqrt{\Delta}}{2\pi^r}\right)^r \equiv \frac{\sqrt{\Delta}}{2\pi^r} \pmod{r}.
\end{equation}
The result then follows since, as the extension $F/\Q_r$ is totally ramified and quadratic, we have that $v_\pi(r) = 2$.
\end{proof}

We define the element $u$ by the expression
\begin{equation}\label{eqn:definitionu}
    u := \frac{\alpha_0^r}{\pi^r} = -\frac{s+\sqrt{\Delta}}{2\pi^r}.
\end{equation}

\begin{lemma}\label{lemma:uniformiser}
Let $u$ be defined as in \eqref{eqn:definitionu}. Then
\begin{equation}
    v_\pi(u^r-u) = 1.
\end{equation}
\end{lemma}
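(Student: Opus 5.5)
The plan is to split $u$ into a $\pi$-adic unit lying in $\Q_r$ plus a term of $\pi$-valuation exactly $1$. Then Lemma \ref{lemma:auxcongruence} (Fermat's little theorem) kills the unit part of $u^r-u$ modulo $\pi^2$, leaving only the small term.

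Concretely, set
\begin{equation*}
    w := \frac{\sqrt{\Delta}}{2\pi^r}, \qquad \sigma := \frac{s}{2\pi^r}, \qquad \text{so that} \qquad u = -w-\sigma .
\end{equation*}
The valuations of these pieces come from the standing hypotheses of this subsection, $v_r(s)=(r+1)/2$ and $v_r(z)=1$, together with Lemma \ref{lemma:valuations}.
\begin{itemize}
    \item Since $v_r(\sqrt{\Delta}) = r/2 = v_r(\pi^r)$ and $r$ is odd (so $2$ is a unit), $w$ is a unit. By the proof of Lemma \ref{lemma:auxcongruence}, $w$ lies in $\Q_r$.
    \item Since $v_r(s) = (r+1)/2$, we get $v_r(\sigma) = 1/2$. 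Because $F/\Q_r$ is quadratic totally ramified with $v_\pi(r)=2$, this means $v_\pi(\sigma)=1$.
\end{itemize}

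Next expand $u^r = -(w+\sigma)^r$ by the binomial theorem. For $1\le k\le r-1$, the binomial coefficient $\binom{r}{k}$ is divisible by $r$, so the $k$-th term has $\pi$-valuation at least $v_\pi(r)+k \ge 3$. The last term $\sigma^r$ has valuation $r\ge 5$. Hence $u^r \equiv -w^r \pmod{\pi^3}$.

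Lemma \ref{lemma:auxcongruence} states exactly that $w^r\equiv w \pmod{\pi^2}$. Combining the two congruences gives
\begin{equation*}
    u^r - u \equiv -w^r + w + \sigma \equiv \sigma \pmod{\pi^2}.
\end{equation*}
Since $v_\pi(\sigma)=1<2$, it follows that $v_\pi(u^r-u)=1$.

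The only delicate point is the bookkeeping of normalizations: $v_r$ on $\overline{\Q}_r$ versus $v_\pi$ on $F$, with $v_\pi = 2v_r$. Once that is fixed, the argument reduces to the binomial estimate above.
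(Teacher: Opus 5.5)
Your proof is correct and follows the paper's argument. Both use the binomial expansion of $(s+\sqrt{\Delta})^r$ together with Lemma~\ref{lemma:auxcongruence} to obtain $u^r-u\equiv s/(2\pi^r)\pmod{\pi^2}$, and then use $v_\pi\bigl(s/(2\pi^r)\bigr)=1$; the only difference is that you read both bounds directly off this congruence, whereas the paper first gets $v_\pi(u^r-u)\ge 1$ from $u^{r-1}\equiv 1\pmod{\pi}$ and then rules out $v_\pi(u^r-u)\ge 2$ by contradiction.
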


\begin{proof}
Since $F/\Q_r$ is totally ramified, $\left(\cO_F/\pi\cO_F\right)^\times$ has order $r-1$. Then 
\begin{equation}
    u^{r-1} \equiv 1 \pmod{\pi},
\end{equation}
and so $v_\pi(u^r-u) \ge 1$. Now, by the binomial expansion, we have that 
\begin{equation}\label{eqn:ur}
    u^r = -\frac{(s+\sqrt{\Delta})^r}{(2\pi^r)^r} = - \frac{\sum_{i=0}^r \binom{r}{i}s^i\sqrt{\Delta}^{r-i}}{(2\pi^r)^r}.
\end{equation}
Since $v_\pi(\cdot) = 2v_r(\cdot)$, we have that $v_\pi(\sqrt{\Delta}) = r$, $v_\pi(s) = r+1$ and $v_\pi\left(\binom{r}{i}\right) \ge 2$  for all $1 \le i \le r-1$. By \eqref{eqn:ur}, this means that 
\begin{equation}\label{eqn:auxcongruence1}
    u^r \equiv -\frac{\sqrt{\Delta}^r}{2^r\pi^{r^2}} \equiv - \frac{\sqrt{\Delta}}{2\pi^r} \pmod{\pi^2},
\end{equation}
where the last congruence follows by Lemma \ref{lemma:auxcongruence}. On the other hand, we have that 
\begin{equation}\label{eqn:auxcongruence2}
    u \equiv - \frac{s+\sqrt{\Delta}}{2\pi^r} \pmod{\pi^2}.
\end{equation}
Assume for contradiction that $u^r \equiv u \pmod{\pi^2}$. Then, combining \eqref{eqn:auxcongruence1} and \eqref{eqn:auxcongruence2}, we get that $v_\pi(s) \ge r+2$, which is a contradiction with the fact that $v_\pi(s) = r+1$. Consequently, the lemma follows.
\end{proof}

\begin{lemma}\label{lemma:minimalpoly}
Keeping the previous notation,
\begin{equation}
    M := \Q_r(\alpha_0) = \Q_r(u^{\frac{1}{r}}, \pi).
\end{equation}
Consequently, the minimal polynomial of the extension $M/F$ is $p(x) = x^r-u$.
\end{lemma}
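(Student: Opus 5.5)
Note first that $u$ is defined as $\alpha_0^r/\pi^r = -(s+\sqrt{\Delta})/(2\pi^r)$, so $\alpha_0^r = u\pi^r$. Hence $(\alpha_0/\pi)^r = u$ and $\alpha_0/\pi$ is an $r$-th root of $u$; write $u^{1/r} := \alpha_0/\pi$ for this choice. Here $\pi\in F=\Q_r(\alpha_0^r)\subseteq M$ and $u\in F$.

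\textbf{Step 1: $M=\Q_r(u^{1/r},\pi)$.} Since $\pi$ and $\alpha_0$ lie in $M$, so does $u^{1/r}=\alpha_0/\pi$, giving $\Q_r(u^{1/r},\pi)\subseteq M$. Conversely, $\alpha_0 = \pi\cdot u^{1/r}$ lies in $\Q_r(u^{1/r},\pi)$. So the two fields are equal. This step is purely formal.

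\textbf{Step 2: $M=F(u^{1/r})$ and a degree count.} Since $F=\Q_r(\pi)$ (indeed $\pi$ and $\sqrt{\Delta}$ differ by a rational factor), we get $M=F(u^{1/r})$, and $u^{1/r}$ is a root of $p(x)=x^r-u\in F[x]$. Thus $[M:F]\le r$. For the reverse inequality:
\begin{itemize}
\item By Lemma \ref{lemma:auxramification}, $[M:\Q_r]$ is divisible by $r$, since $\Q_r(\gamma_0)\subseteq M$ with $\gamma_0=\alpha_0+\beta_0$ and $\beta_0$ expressed via $\alpha_0$ and $z$.
\item Alternatively, and more directly, \cite[Corollary 4.11]{computationconductor} (already used in Proposition \ref{prop:ramification}) gives $e_{M/F}=r$.
\end{itemize}
Either way $[M:F]\ge r$, hence $[M:F]=r$, and $p(x)$, being monic of degree $r$ with root $u^{1/r}$ generating $M$ over $F$, is the minimal polynomial.

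\textbf{Self-contained fallback for the degree bound.} If one prefers not to cite \cite{computationconductor}, I would prove irreducibility of $x^r-u$ over $F$ from Lemma \ref{lemma:uniformiser}. Substitute $x = y + u$ and expand $(y+u)^r-u$. The constant term is $u^r-u$, which has $v_\pi$ exactly $1$. The middle coefficients $\binom{r}{i}u^{r-i}$ have $v_\pi\ge 2$, since $u$ is a unit (it is a unit because $v_r(\alpha_0)=1/2$ forces $v_\pi(\alpha_0^r)=r=v_\pi(\pi^r)$). The polynomial is monic in $y$. So it is Eisenstein with respect to $\pi$, hence irreducible, and moreover $M/F$ is totally ramified of degree $r$.

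\textbf{Main obstacle.} The only nontrivial point is this degree/irreducibility claim. The Eisenstein shift via Lemma \ref{lemma:uniformiser} is exactly what makes it work, which explains why that lemma precedes this statement.
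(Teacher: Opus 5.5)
Your proof is correct and matches the paper's. Step~1 is the same two-inclusion argument, and the paper then simply asserts that $x^r-u$ is the minimal polynomial from $M=F(u^{1/r})$ with $F=\Q_r(\pi)$, leaving implicit the degree bound $[M:F]\ge r$ that you supply. Any of your three justifications of that bound works: the Eisenstein shift is precisely the content of the next result, Lemma~\ref{lemma:eisenstein}, and the equality $e_{M/F}=r$ from \cite[Corollary 4.11]{computationconductor} is the fact the paper already uses in Proposition~\ref{prop:ramification}.
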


\begin{proof}
Firstly, we note that $\pi \in F \subseteq M$ and $u^{\frac{1}{r}} = \frac{\alpha_0}{\pi} \in M$, and so $\Q_r(u^{\frac{1}{r}}, \pi) \subseteq M$. Conversely, $\alpha_0 = u^{1/r}\pi \in \Q_r(u^{\frac{1}{r}}, \pi)$ and so $M \subseteq \Q_r(u^{\frac{1}{r}}, \pi)$.

The statement about the polynomial corresponds to the fact that $F=\Q_r(\pi)$ and $M = F(u^{1/r})$ so that the minimal polynomial of $M/F$ is precisely $x^r-u$.
\end{proof}

\begin{lemma}\label{lemma:eisenstein}
Let $p(x)$ be as defined in Lemma \ref{lemma:minimalpoly}. Then $q(x) = p(x+u) \in F[x]$ is an Eisenstein polynomial.
\end{lemma}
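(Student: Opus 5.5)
We expand $q(x)=p(x+u)=(x+u)^r-u$ with the binomial theorem and check the Eisenstein conditions with respect to the uniformizer $\pi$ of $\cO_F$. Explicitly,
\begin{equation*}
    q(x) = x^r + \sum_{i=1}^{r-1}\binom{r}{i}u^{r-i}x^{i} + (u^r-u).
\end{equation*}
The polynomial is monic. Once the coefficients are shown to lie in $\cO_F$, the Eisenstein criterion reduces to three facts. First, every middle coefficient $\binom{r}{i}u^{r-i}$, for $1\le i\le r-1$, has $v_\pi\ge 1$. Second, the constant term $u^r-u$ has $v_\pi$ exactly $1$.

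The first step is to check that $u$ is a unit of $\cO_F$. By \eqref{eqn:definitionu}, $u=\alpha_0^r/\pi^r$. Lemma \ref{lemma:valuations} gives $v_r(\alpha_0)=1/2$, so $v_r(\alpha_0^r)=r/2$. Since $v_r(\pi)=1/2$, we also have $v_r(\pi^r)=r/2$, and therefore $v_\pi(u)=0$. Equivalently, $u=-(s+\sqrt{\Delta})/(2\pi^r)$ with $v_\pi(s)=r+1$ and $v_\pi(\sqrt\Delta)=r$, so $v_\pi(s+\sqrt\Delta)=r$. Either way $u\in\cO_F^\times$, and all coefficients of $q(x)$ lie in $\cO_F$.

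For the middle coefficients, $r\mid\binom{r}{i}$ for $1\le i\le r-1$, and $v_\pi(r)=2$ because $F/\Q_r$ is totally ramified of degree $2$. Since $u$ is a unit, $v_\pi\bigl(\binom{r}{i}u^{r-i}\bigr)\ge 2\ge 1$.

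For the constant term, Lemma \ref{lemma:uniformiser} gives $v_\pi(u^r-u)=1$ exactly. This is precisely the Eisenstein condition on the constant term, so $q(x)$ is Eisenstein over $F$.

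The only substantive input is Lemma \ref{lemma:uniformiser}, which is already established. The remaining care is confirming that $u$ is integral (and a unit), and that $q(x)$ has coefficients in $F$ rather than in a larger field; the latter holds because $u\in F$ by definition.
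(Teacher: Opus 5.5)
Your proof is correct and is the paper's argument: expand $(x+u)^r-u$ binomially, use $v_\pi\bigl(\binom{r}{i}\bigr)=2$ for the middle coefficients, and use Lemma~\ref{lemma:uniformiser} for the constant term. Your explicit check that $u\in\cO_F^\times$, so that all coefficients are integral, is a step the paper leaves implicit. One harmless slip: you announce three facts but list only two.
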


\begin{proof}
We note that 
\begin{equation}
    q(x) = (x+u)^r-u = x^r + \sum_{i=1}^{r-1} \binom{r}{i} u^ix^{r-i} + u^r-u.
\end{equation}
The result then follows by Lemma \ref{lemma:uniformiser}, since $v_\pi(u^r-u) =1$ and $v_\pi\left(\binom{r}{i}\right) = 2$ for all $1 \le i \le r-1$.
\end{proof}

\begin{lemma}\label{lemma:valuationdiscriminant}
Let $u$ be defined as in \eqref{eqn:definitionu}. Then $u^{1/r}-u$ is a uniformiser of $M/F$ and $v_\pi\left(\Delta\left(M/F\right)\right) = 2r$.
\end{lemma}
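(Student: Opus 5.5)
The plan is to read everything off the Eisenstein polynomial of Lemma \ref{lemma:eisenstein}. Since $q(x)=p(x+u)=(x+u)^r-u$ is Eisenstein over $F$ and $p(x)$ is the minimal polynomial of $u^{1/r}$ (Lemma \ref{lemma:minimalpoly}), the roots of $q(x)$ are the elements $\zeta u^{1/r}-u$ for $\zeta$ an $r$-th root of unity. In particular $u^{1/r}-u$ is a root of an Eisenstein polynomial of degree $r$. A root of an Eisenstein polynomial over $F$ generates a totally ramified extension and is a uniformiser of it. Since $F(u^{1/r}-u)=F(u^{1/r})=M$, the extension $M/F$ is totally ramified of degree $r$, which is consistent with Lemma \ref{lemma:auxramification}, and $\varpi:=u^{1/r}-u$ is a uniformiser of $M$. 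This proves the first claim.

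For the discriminant, $\mathcal{O}_M=\mathcal{O}_F[\varpi]$ because $\varpi$ is a uniformiser of a totally ramified extension. Hence $\Delta(M/F)$ is generated by $\operatorname{disc}(q)=\pm N_{M/F}(q'(\varpi))$, and
\begin{equation*}
v_\pi(\Delta(M/F)) = v_\pi\bigl(N_{M/F}(q'(\varpi))\bigr) = v_{M}(q'(\varpi)),
\end{equation*}
where $v_M$ is the normalized valuation of $M$, with $v_M(\pi)=r$ because the extension is totally ramified. This is just the different of $M/F$.

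To compute $v_M(q'(\varpi))$, write
\begin{equation*}
q'(\varpi)=r\varpi^{r-1}+\sum_{i=1}^{r-1}(r-i)\binom{r}{i}u^i\varpi^{r-i-1}.
\end{equation*}
The valuations are as follows. Since $v_\pi(r)=2$, we have $v_M(r)=2r$. Each binomial coefficient $\binom{r}{i}$ with $1\le i\le r-1$ has $v_\pi=2$, so $v_M=2r$. The factor $r-i$ and $u$ are units; for $u$ this holds because $v_\pi(u)=0$, using $v_\pi(\alpha_0)=v_\pi(\pi)$ from Lemma \ref{lemma:valuations}. The term $\varpi^{k}$ has $v_M=k$. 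So the term for index $i$ has valuation $2r+(r-i-1)$, and the minimum over $i\in\{1,\dots,r-1\}$ occurs uniquely at $i=r-1$, with value $2r$. The leading term $r\varpi^{r-1}$ has valuation $2r+r-1>2r$. The $i=r-1$ term is $r\cdot u^{r-1}$ (times a unit), and it strictly dominates all the others. Hence $v_M(q'(\varpi))=2r$, which gives $v_\pi(\Delta(M/F))=2r$.

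The main point needing care is this valuation bookkeeping. One must check that exactly one term attains the minimum, so that no cancellation occurs, and one must use the normalization $v_M(\pi)=r$ when passing from the different to the norm. An equivalent cross-check is Dedekind's different bound for a wildly ramified extension: the different exponent must be at least $r$ and at most $r-1+v_M(r)=3r-1$, and $2r$ lies in this range.
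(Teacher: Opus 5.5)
Your proof is correct and follows essentially the same route as the paper: the Eisenstein polynomial gives the uniformiser and $\mathcal{O}_M=\mathcal{O}_F[\varpi]$, so the discriminant ideal is generated by the polynomial discriminant, which the paper evaluates with the closed formula $\operatorname{disc}(x^r-u)=\pm r^r u^{r-1}$. Your term-by-term valuation of $q'(\varpi)$ is the same computation in another form. It can be shortened by noting $q'(x)=r(x+u)^{r-1}$, so $q'(\varpi)=r\,(u^{1/r})^{r-1}$ and $v_M(q'(\varpi))=v_M(r)=2r$ immediately.
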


\begin{proof}
The first statement follows from Lemma \ref{lemma:eisenstein}, since $u^{1/r}-u$ is a root of $q(x)$, which is an Eisenstein polynomial. The second statement follows similarly to \cite[Lemma 4.10]{computationconductor}, since the discriminant of $p(x)$ is 
\begin{equation}
    \Delta(p(x)) = (-1)^{r\frac{r-1}{2}}r^ru^{r-1}.
\end{equation}
Then, $\cO_M = \cO_F[u^{1/r}-u] = \cO_F[u^{1/r}]$ and the discriminant $\Delta(M/F)$ is, up to units, equal to $\Delta(p(x))$.
\end{proof}

With this, we may compute the valuation of the discriminant, which will allow us to compute the wild conductor.

\begin{proposition}\label{prop:valuationdiscriminant}
Keeping the previous notation, we have 
\begin{equation}
    v_r(\Delta\left(\Q_r(\gamma_0)/\Q_r\right)) = \frac{3r-1}{2}.
\end{equation}
\end{proposition}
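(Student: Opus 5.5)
The plan is to compute the discriminant of the degree $2r$ extension $M=\Q_r(\alpha_0)$ over $\Q_r$ in two ways, using the transitivity formula for discriminants in towers:
\[
\Delta(M/\Q_r)=N_{N/\Q_r}\bigl(\Delta(M/N)\bigr)\cdot\Delta(N/\Q_r)^{[M:N]}
\]
for an intermediate field $N$. We apply it with $N=F$ and with $N=\Q_r(\gamma_0)$, and then solve for $v_r(\Delta(\Q_r(\gamma_0)/\Q_r))$.

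First, we check that $\Q_r(\gamma_0)\subseteq M$ with $[M:\Q_r(\gamma_0)]=2$.
\begin{itemize}
\item From \eqref{eq:alpha-beta} we get $\alpha_0^r\beta_0^r=(s^2-\Delta)/4=z^r$. With the normalisation of the $r$-th roots used in \cite[Proposition 2.2]{computationconductor}, this gives $\alpha_0\beta_0=z$. Hence $\gamma_0=\alpha_0+z/\alpha_0\in M$.
\item By Lemma \ref{lemma:auxramification}, $[\Q_r(\gamma_0):\Q_r]=r$, and the extension is totally ramified.
\item By Lemma \ref{lemma:minimalpoly}, $[M:F]=r$ and $[F:\Q_r]=2$, so $[M:\Q_r]=2r$.
\end{itemize}
It follows that $[M:\Q_r(\gamma_0)]=2$.

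Second, we compute via $F$.
\begin{itemize}
\item Since $v_r(\Delta)=r$ is odd (Lemma \ref{lemma:valuations}) and $r$ is odd, $F=\Q_r(\sqrt{\Delta})$ is a tamely ramified quadratic extension. Thus $v_r(\Delta(F/\Q_r))=1$.
\item By Lemma \ref{lemma:valuationdiscriminant}, $v_\pi(\Delta(M/F))=2r$. Since $F/\Q_r$ is totally ramified, $v_r(N_{F/\Q_r}(\pi))=1$, so the norm of $\Delta(M/F)$ has $r$-adic valuation $2r$.
\item Putting these together, $v_r(\Delta(M/\Q_r))=2r+r\cdot 1=3r$.
\end{itemize}

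Third, we compute via $\Q_r(\gamma_0)$.
\begin{itemize}
\item $M/F$ is totally ramified, because $q(x)$ is Eisenstein by Lemma \ref{lemma:eisenstein}. Together with $e_{F/\Q_r}=2$, this makes $M/\Q_r$ totally ramified of degree $2r$.
\item Therefore $M/\Q_r(\gamma_0)$ is a ramified quadratic extension. Since $r$ is odd it is tame, and its discriminant is the ideal generated by a uniformiser of $\Q_r(\gamma_0)$.
\item That uniformiser has norm of $r$-adic valuation $1$, because $\Q_r(\gamma_0)/\Q_r$ is totally ramified.
\item The tower formula then gives $3r=1+2\,v_r(\Delta(\Q_r(\gamma_0)/\Q_r))$, hence $v_r(\Delta(\Q_r(\gamma_0)/\Q_r))=(3r-1)/2$, as claimed.
\end{itemize}

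The main obstacle is the bookkeeping of normalisations. The paper uses $v_\pi$ with $v_\pi(r)=2$ alongside $v_r$, and one must track consistently how norms of ideals convert between these valuations. A second delicate point is justifying $\gamma_0\in M$, which requires the choice of roots with $\alpha_0\beta_0=z$ rather than $\zeta_r^k z$. If that choice is not available in the normalisation of \cite{computationconductor}, the fallback is to replace $\alpha_0$ by a suitable conjugate $\zeta_r^j\alpha_0$, which lies in a field isomorphic to $M$.
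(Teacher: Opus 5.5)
Your proposal is correct and is essentially the paper's argument. The paper's proof simply invokes Lemma \ref{lemma:valuationdiscriminant} (that is, $v_\pi(\Delta(M/F))=2r$) together with the tower-of-discriminants reasoning of \cite[Lemma 4.10 (3)]{computationconductor}. Your computation $v_r(\Delta(M/\Q_r))=2r+r=3r=1+2\,v_r(\Delta(\Q_r(\gamma_0)/\Q_r))$, through the two intermediate fields $F$ and $\Q_r(\gamma_0)$ of $M=\Q_r(\alpha_0)$, makes that argument explicit. Your handling of the normalisation $\alpha_0\beta_0=z$, which gives $\gamma_0\in M$, is also sound, since this is exactly the choice that makes $\gamma_j=\zeta_r^j\alpha_0+\zeta_r^{-j}\beta_0$ the roots of $F(x)$.
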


\begin{proof}
Follows from Lemma \ref{lemma:valuationdiscriminant}, by an identical argument to the one used in \cite[Lemma 4.10 (3)]{computationconductor}.
\end{proof}

\begin{proposition}\label{prop:wildconductor}
Let $s,z \in \Z$ and suppose that \ref{item1} and \ref{item2} are satisfied for $q=r$ and that $v_r(s) = (r+1)/2$. Then, the wild conductor exponent of $C(z,s)$ over $\Q_r$ is 
\begin{equation}
    \id{n}_{wild}(C(z,s)/\Q_r) = \frac{r+1}{2},
\end{equation}
and the wild conductor exponent of $C(z,s)$ over $K_\id{r}$ is 
\begin{equation}
    \id{n}_{wild}(C(z,s)/K_\id{r}) = \frac{r^2-1}{4}.
\end{equation}
\end{proposition}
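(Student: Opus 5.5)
The plan is to use the standard formula for the wild part of the conductor of a hyperelliptic curve in terms of the Galois action on the roots (as in \cite[Lemma 1.6]{computationconductor} and \cite{CelineClusters}):
\begin{equation*}
\mathfrak{n}_{wild}(C/K) = \sum_{\mathfrak{r} \in \mathcal{R}/G_K} \left( v_K(\Delta(K(\mathfrak{r})/K)) - [K(\mathfrak{r}):K] + f_{K(\mathfrak{r})/K}\right),
\end{equation*}
where the sum runs over Galois orbits of roots. This is the approach taken in \cite[Section 4]{computationconductor}, and all the local ingredients have already been assembled above.

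Over $\Q_r$, Lemma \ref{lemma:irreducibility} shows that $F(x)$ is irreducible, so there is a single orbit, of size $r$. By Lemma \ref{lemma:auxramification}, $\Q_r(\gamma_0)/\Q_r$ is totally ramified of degree $r$, so $f=1$. Proposition \ref{prop:valuationdiscriminant} gives $v_r(\Delta)=\frac{3r-1}{2}$. Plugging in yields
\begin{equation*}
\frac{3r-1}{2}-r+1=\frac{r+1}{2}.
\end{equation*}

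Over $K_\id{r}$, I would first check that $F(x)$ remains irreducible over $K_\id{r}$. The field $K_\id{r}$ has degree $(r-1)/2$, which is prime to $r$, and $\Q_r(\gamma_0)$ has degree $r$, so the two fields are linearly disjoint. Hence $K_\id{r}(\gamma_0)/K_\id{r}$ still has degree $r$, is totally ramified (the ramification indices are coprime), and again $f=1$.

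The discriminant is then computed with the tower formula. Since $\Q_r(\gamma_0)/\Q_r$ and $K_\id{r}/\Q_r$ are linearly disjoint, tame versus wild, I would compare the differents in the compositum $N=K_\id{r}(\gamma_0)$ via
\begin{equation*}
\mathcal{D}_{N/\Q_r}=\mathcal{D}_{N/K_\id{r}}\mathcal{D}_{K_\id{r}/\Q_r}=\mathcal{D}_{N/\Q_r(\gamma_0)}\mathcal{D}_{\Q_r(\gamma_0)/\Q_r}.
\end{equation*}
Both $K_\id{r}/\Q_r$ and $N/\Q_r(\gamma_0)$ are tamely and totally ramified of degree $e=(r-1)/2$, so each has different exponent $e-1$ in its own normalisation. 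Write $v_N$ for the normalised valuation of $N$, with $v_N(r)=r(r-1)/2$.
\begin{itemize}
\item The different of $\Q_r(\gamma_0)/\Q_r$ has exponent $\frac{3r-1}{2}$ in the valuation of $\Q_r(\gamma_0)$, which becomes $e\cdot\frac{3r-1}{2}$ in $v_N$.
\item Adding the tame contribution $e-1$ gives $v_N(\mathcal{D}_{N/\Q_r})=e\cdot\frac{3r-1}{2}+e-1$.
\item The contribution of $K_\id{r}/\Q_r$ is $r(e-1)$ in $v_N$.
\item Subtracting, $v_N(\mathcal{D}_{N/K_\id{r}})=e\cdot\frac{3r-1}{2}+e-1-r(e-1)$.
\end{itemize}
Since $N/K_\id{r}$ is totally ramified, $v_{\id{r}}(\Delta(N/K_\id{r}))$ equals this different exponent. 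Substituting $e=(r-1)/2$ and then subtracting $r-1$ (that is, $[N:K_\id{r}]-f$) should give $(r^2-1)/4$.

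The main obstacle is this last computation: tracking the valuation normalisations carefully through the tower, and ensuring the tame compositum argument (Abhyankar-type) is justified. An alternative is to argue directly via ramification groups, using that the wild conductor is multiplied by $e_{K_\id{r}/\Q_r}=\frac{r-1}{2}$ when passing to a tame base extension in the upper-numbering filtration. This gives $\frac{r-1}{2}\cdot\frac{r+1}{2}=\frac{r^2-1}{4}$ immediately, and I would present this as the cleaner route, with the discriminant computation serving as a check.
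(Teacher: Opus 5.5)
Your proposal is correct and matches the paper. Over $\Q_r$ you use the orbit formula with a single orbit, $f=1$ and $v_r(\Delta(\Q_r(\gamma_0)/\Q_r))=\frac{3r-1}{2}$; over $K_\id{r}$ your preferred route, multiplying by the tame index $e_{K_\id{r}/\Q_r}=\frac{r-1}{2}$, is what the paper does via \cite[Lemma 2.2]{CelineClusters}. Your different-tower check also works out: it gives $v_{\id{r}}(\Delta(N/K_\id{r}))=\frac{(r-1)(r+5)}{4}$, and subtracting $r-1$ yields $\frac{r^2-1}{4}$.
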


\begin{proof} 
Since $F(x)$ is irreducible and has degree $r$, a particular case of \cite[Theorem 12.3]{BBB} (see e.g.\ \cite[Theorem 1.12]{computationconductor}), implies that
\begin{equation}
    \mathfrak{n}_{\text{wild}}(C/\Q_r)=v_r(\Delta({\Q_r(\gamma_0)/\Q_r})) - r + f_{\Q_r(\gamma_0)/\Q_r},
\end{equation}
where $f_{\Q_r(\gamma_0)/\Q_r}$ denotes the residual degree of $\Q_r(\gamma_0)/\Q_r$. Since the ramification degree of $\Q_r(\gamma_0)/\Q_r$ is $r$ by Lemma \ref{lemma:auxramification}, $f_{\Q_r(\gamma_0)/\Q_r} = 1$. The desired result over $\Q_r$ then follows from Proposition \ref{prop:valuationdiscriminant}.

Finally, the statement for the wild conductor over $K_\id{r}$ follows by applying \cite[Lemma 2.2]{CelineClusters} since the ramification degree of $K/\Q_r$ is $(r-1)/2$.
\end{proof}

\subsubsection{Proof of Propositions \ref{prop:newconductorQ} and \ref{prop:newconductorK}} We now have all the tools to conclude the computation of the conductor of $C(z,s)$ and, therefore, to finish the proof of Propositions \ref{prop:newconductorQ} and \ref{prop:newconductorK}. First, we need to introduce more notation within the cluster picture context. 
\begin{definition}\label{def:twin}
An even cluster whose children are all even is called \textit{übereven}.
\end{definition}

\begin{definition} 
For two clusters $\s_1$ and $\s_2$ we write $\s_1 \wedge \s_2$ for the smallest cluster containing both of them.
\end{definition}

Given a cluster $\s$, we denote by $\tilde{\s}$ the set of odd children of $\s$ and consider 
\begin{equation}
    \tilde{\lambda}_\s :=\frac{1}{2}\left(|\tilde{\s}|d_\s + \sum_{\gamma\notin \s} d_{\{\gamma\}\wedge\s}\right).
\end{equation}

Let $L$ be as in the preceding subsections and let $I_K$ be the inertia group of $\Gal(L/K)$. If $A$ is any set where $I_K$ acts on, we let $A/I_K$ be the set of orbits of $A$ under the action of $I_K$. Furthermore, if $\s$ is a cluster, we denote by $I_\s$ the stabiliser of $\s$ under $I_K$. If $a\in\Q$, we define $\xi_\s(a)=\max\{-v_2([I_K:I_\s]a),0\}$, where $v_2$ is the $2$-adic valuation of $\Q$. Finally, define the following sets attached to the cluster picture of $C(z,s)$:
\begin{align*}
    & U = \{\s : \s\neq\mathcal{R} \text{ is odd and } \xi_{P(\s)}(\tilde{\lambda}_{P(\s)})\le \xi_{P(\s)}(d_{P(\s)})\},\\
    & V = \{\s : \s \text{ is proper, non-übereven and } \xi_\s(\tilde{\lambda}_\s)=0\}.
\end{align*}

\begin{proof}[Proof of Proposition \ref{prop:newconductorQ}]

If $q \neq r$, by Proposition \ref{prop:clustersQ}, we have that the only odd clusters are the singletons $\{\gamma_i\}$, which have $P(\{\gamma_i\}) = \mathcal{R}$. We therefore compute 
\begin{equation}
    \tilde{\lambda}_\mathcal{R} = \frac{r}{4},\quad \xi_\mathcal{R}(\tilde{\lambda}_\mathcal{R}) = 2, \quad \text{and} \quad \xi_\mathcal{R}(d_\mathcal{R}) = 1,
\end{equation}
and consequently, it follows that $U = \emptyset$. Similarly, the only proper non-übereven cluster is $\mathcal{R}$ and so $V = \emptyset$. By \cite[Theorem   1.12]{computationconductor}, it follows that 
\begin{equation}
    \id{n}_{tame}(C(z,s)/\Q_q) = r-1.
\end{equation}
By Proposition \ref{prop:ramification} and Lemma \ref{lemma:twoparts}, the wild part of the conductor is $0$ and, consequently, $\id{n}(C(z,s)/\Q_q) = \id{n}_{tame}(C(z,s)/\Q_q)$. Now, assume that $q=r$. In this case, a similar argument applies and we find that 
\begin{equation}
    d_\mathcal{R} = \frac{r+1}{2(r-1)} \quad \text{and} \quad \tilde{\lambda}_\mathcal{R} = \frac{r(r+1)}{4(r-1)}.
\end{equation}
Clearly, $0\neq\xi_\mathcal{R}(\tilde{\lambda}_\mathcal{R}) > \xi_\mathcal{R}(d_\mathcal{R})$ unless $r \equiv 7 \pmod{8}$. Consequently, if $r \not \equiv 7 \pmod{8}$, it follows that $U=V=\emptyset$ and 
\begin{equation}
    \id{n}_{tame}(C(z,s)/\Q_q) = r-1.
\end{equation}
If $r \equiv 7 \pmod{8}$, we have that 
\begin{equation}
    U = \{\{\gamma_i\} \mid i = 0,1,\dots, r-1\} \quad \text{and} \quad V = \{\mathcal{R}\}.
\end{equation}

We now distinguish two cases depending on whether $v_r(s) \ge (r+3)/2$ or $v_r(s) = (r+1)/2$. In the first case, the polynomial $F(x)$ is reducible and, by \cite[Proposition 4.1]{computationconductor}, there exists some $k_0$ with $0 \le k_0 \le r-1$ with $\gamma_{k_0} \in \Q_r$. In addition, Proposition \ref{prop:ramification} gives that the ramification degree of $E/\Q_r$ is $(r-1)/2$. Therefore, there are precisely two distinct orbits of inertia in the set $\{\gamma_i \mid i \neq k_0\}$, which along with the singleton orbit $\{\gamma_{k_0}\}$ give rise to three orbits in $U$ under the action of inertia. Therefore, the conductor in this situation is 
\begin{equation}
    \id{n}(C(z,s)/\Q_r) = r-1-3+1 = r-3.
\end{equation}
Now, let us suppose that $v_r(s) = (r-1)/2$. By Lemma \ref{lemma:auxramification}, the ramification degree of $\Q_r(\gamma_0)/\Q_r$ is $r$ and, therefore, there is a unique orbit of inertia in the set $\{\gamma_0, \dots, \gamma_{r-1}\}$. Therefore, the tame part of the conductor is equal to 
\begin{equation}
    \id{n}_{tame}(C/\Q_r) = r-1-1+1 = r-1.
\end{equation}
Note that this is the same tame conductor that we obtained if $r \not \equiv 7 \pmod{8}$ and the result is therefore independent of the congruence class of $r$ modulo $8$ whenever $F(x)$ is irreducible.

Adding up the wild part of the conductor given in Proposition \ref{prop:wildconductor}, we get that, in this case
\begin{equation}
    \id{n}(C/\Q_r) = r-1+ \frac{r+1}{2} = \frac{3r-1}{2},
\end{equation}
as desired.
\end{proof}

The proof of Proposition \ref{prop:newconductorK} follows a similar line, and we shall omit some details.

\begin{proof}[Proof of Proposition \ref{prop:newconductorK}]

If $q \neq r$ or if $q = r$ and $r \not \equiv 7 \pmod{8}$, the proof is virtually identical to that of Proposition \ref{prop:newconductorQ}, so let us assume that $q = r$ and $r \equiv 7 \pmod{8}$ with $v_r(s) \ge (r+3)/2$. By Lemmas \ref{lemma:auxramification} and \ref{prop:ramification}, the ramification index of the extension $L/K$ is now given by 
\begin{equation}
    e_{L/K} = \frac{e_{L/\Q_r}}{e_{K/\Q_r}} = \frac{(r-1)/2}{(r-1)/2} = 1.
\end{equation}
Consequently, the extension $K(\gamma_j)/K$ is unramified for any $j \neq k_0$ and there are precisely $r-1$ orbits of inertia in the set $\{\gamma_i \mid i \neq k_0\}$. Thus, the tame conductor is given by 
\begin{equation}
    \id{n}_{tame}(C/K) = r-1-r+1 = 0,
\end{equation}
and the curve therefore has good reduction over $K$. If $v_r(s) = (r+1)/2$, a similar argument by using Proposition \ref{prop:ramification} can be used to show that the ramification index of $K(\gamma_0)/K$ is $r$, so that 
\begin{equation}
    \id{n}_{tame}(C/K) = r-1-1+1 = r-1,
\end{equation}
which is the same result as in the case where $r \not \equiv 7 \pmod{8}$. All that remains is to add the wild conductor given by Proposition \ref{prop:wildconductor} to obtain that 
\begin{equation}
    \id{n}(C/K) = r-1 + \frac{r^2-1}{4} = \frac{r^2+4r-5}{4} = \frac{(r-1)(r+5)}{4}, 
\end{equation}
giving the desired result.
\end{proof}

\section{Irreducibility}\label{sec:irred}

In this section, we prove asymptotic irreducibility for the representation $\overline{\rho}_{J_{2,r}, \id{p}}$ defined in Section \ref{Sec:connectingfrey}. For this, we let $\id{r}$ be as in Section \ref{sec:notation}. Recall the following result (see \cite[Corollary 1]{BCDF1}).

\begin{thm}\label{thm:irredasym}
Let $K$ be a totally real field, $\id{q}$ a prime ideal of $K$ and $g$ a positive integer. Then there exists a constant $C(K,g,\id{q})$ such that for all primes $p>C(K,g,\id{q})$ and all $g$-dimensional abelian varieties $A/K$ satisfying:
\begin{enumerate}
    \item $A$ has potentially good reduction at $\id{q}$,
    \item $A$ is semistable at the primes of $K$ dividing $p$,
    \item $A$ is of $\GL_2(F)$-type for some totally real field $F$,
    \item all endomorphisms of $A$ are defined over $K$ (i.e.\
    $\End_K(A) = \End_{\overline{K}}(A)$),
\end{enumerate}
Then the residual representation $\bar{\rho}_{A,\id{p}}$ is irreducible.
\end{thm}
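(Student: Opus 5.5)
The plan is to argue by contradiction, in the style of the classical Mazur/Momose/David arguments for elliptic curves, adapted to $\GL_2(F)$-type varieties as in \cite{BCDF1}. Suppose $\bar{\rho}_{A,\id{p}}$ is reducible, so that its semisimplification is $\theta\oplus\theta'$ for characters $\theta,\theta':G_K\to\overline{\F}_p^\times$. Hypotheses (3) and (4) ensure that $\rho_{A,\id{p}}$ is a genuine two-dimensional representation of $G_K$ with coefficients in $F_\id{p}$. Since $F$ is totally real, $\det\rho_{A,\id{p}}=\chi_p$ is the cyclotomic character, so $\theta\theta'=\bar{\chi}_p$. The aim is to show that $p$ divides a nonzero integer depending only on $(K,g,\id{q})$. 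Taking $C(K,g,\id{q})$ larger than all such integers, and larger than the primes ramified in $K$, then gives the theorem.

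\textbf{Step 1: control $\theta$ at the primes above $p$.} Take $p$ unramified in $K$. By hypothesis (2) and Raynaud's theorem on finite flat group schemes (for good reduction) or Grothendieck's description of semistable reduction (for the toric part), the restriction of $\theta$ to the inertia group at each prime $\id{P}\mid p$ of $K$ is $\bar{\chi}_p^{a_\id{P}}$ with $a_\id{P}\in\{0,1\}$. Via class field theory, $\theta^{12}$ (or some fixed power killing the tame finite-order part away from $p$) then corresponds on ideals to a finite-order character times the map $\beta\mapsto\prod_{\id{P}\mid p}\sigma_\id{P}(\beta)^{a_\id{P}}\bmod p$. Here $\sigma_\id{P}$ runs over embeddings, and the exponent data $\varepsilon=(a_\id{P})$ ranges over a finite set independent of $A$. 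The conductor of $\theta$ away from $p$ is not controlled, but raising to a bounded power $h$, with $h$ depending on $K$ through the class number and the unit group, removes this ambiguity: on principal ideals generated by totally positive units the character is trivial. This gives $\theta^{h}(\mathrm{Art}(\beta))\equiv\prod_{\id{P}}\sigma_\id{P}(\beta)^{a_\id{P}h}\pmod{\id{p}}$ for suitable $\beta$.

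\textbf{Step 2: use $\id{q}$.} By hypothesis (1), after a finite extension $L/K_\id{q}$ of degree bounded in terms of $g$ (for instance $K_\id{q}(A[3])$ or $A[4]$), $A$ acquires good reduction. The characteristic polynomial of $\mathrm{Frob}$ at the prime of $L$ is then independent of $p$, and its roots are Weil numbers of absolute value $N(\id{q})^{f/2}$ lying in a field of bounded degree. Choose a generator $\beta$ of $\id{q}^{h}$. Then $\theta(\mathrm{Frob}_\id{q}^{hf'})$ is both a root $\mu$ of this bounded family of polynomials reduced mod $\id{p}$, and, by Step 1, the reduction of $\prod_\id{P}\sigma_\id{P}(\beta)^{a_\id{P}f'}$. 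Hence $p$ divides the norm of $\mu-\prod_\id{P}\sigma_\id{P}(\beta)^{a_\id{P}f'}$ for some $\mu$ and $\varepsilon$ in finite sets depending only on $(K,g,\id{q})$.

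\textbf{Main obstacle: nonvanishing.} One must show that $\mu\neq\prod_\id{P}\sigma_\id{P}(\beta)^{a_\id{P}f'}$. Comparing absolute values under all complex embeddings, $\mu$ has absolute value $N(\id{q})^{hf'/2}$ everywhere, while the product has absolute value $|\beta|^{f'}$ raised to a count of embeddings with $a=1$. Equality is possible only in the balanced case where $\varepsilon$ has half the weight. This case is exactly where the argument can fail, and where care is needed: for instance, one can exploit that $\beta$ is a $\id{q}$-adic non-unit for exactly one prime, so the product is not a $\id{q}$-unit-balanced Weil number, or force the exponents to be uniform across the $\id{P}$ by using Galois conjugation in a Galois closure. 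I expect this step, together with bounding the auxiliary constants uniformly over all $A$, to be the main technical work. Once it is done, the finitely many nonzero norms bound $p$, which yields the existence of the constant $C(K,g,\id{q})$.
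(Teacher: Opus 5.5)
Your architecture is the right one: Momose/David style, as in Freitas--Siksek, which is what \cite{BCDF1} extends (the paper itself only quotes \cite[Corollary 1]{BCDF1}). Your Step 2 is correct when the exponent pattern at $p$ is constant. But the step you call the main obstacle is a genuine gap, and no further analysis at $\mathfrak{q}$ can close it.

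Take $K=\Q(\sqrt5)$, $\mathfrak{q}=(3)$ (inert, $N(\mathfrak{q})=9$), $\beta=3$, and the mixed pattern $(1,0)$ on the two embeddings. Then $\prod_\sigma\sigma(\beta)^{a_\sigma}=3$ has absolute value $N(\mathfrak{q})^{1/2}$ under every embedding. Up to sign it equals the Frobenius eigenvalue of a supersingular elliptic curve over $\mathbb{F}_9$, so the norm you need to be nonzero can vanish. Neither suggested repair helps. The element $\beta$ is already a non-unit only at $\mathfrak{q}$, and nothing makes the exponents Galois-symmetric, since they are dictated by inertia at $p$. Tellingly, your argument never uses that $K$ is totally real, and the statement is false without it. For $K$ imaginary quadratic and $E/K$ with CM by $\mathcal{O}_K$, $E[p]\simeq E[\mathfrak{p}]\oplus E[\bar{\mathfrak{p}}]$ is reducible for every split $p$, with exactly such a mixed pattern.

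The missing idea is to feed global units, not a generator of a power of $\mathfrak{q}$, into your reciprocity relation. Your sentence about principal ideals generated by units should become exactly this. Index the exponents by embeddings $\tau\colon K\hookrightarrow\overline{\Q}_p$. By Raynaud, $\theta|_{I_{\mathfrak{P}}}$ is a product of level-$f_{\mathfrak{P}}$ fundamental characters with exponents in $\{0,1\}$, not just $\bar\chi_p^{a_{\mathfrak{P}}}$. So $s=(s_\tau)\in\{0,1\}^{\mathrm{Hom}(K,\overline{\Q})}$, and put $N_s(x)=\prod_\tau\tau(x)^{s_\tau}$.

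For a totally positive unit $\varepsilon$, the idelic relation gives $N_s(\varepsilon)^h\equiv 1$ modulo a prime above $p$, hence $p\mid\mathrm{Norm}(N_s(\varepsilon)^h-1)$. If $s$ is not constant, it is not orthogonal to the trace-zero hyperplane. Since $K$ is totally real, Dirichlet's theorem says this hyperplane is spanned by the vectors $(\log|\tau(\varepsilon)|)_\tau$. So some unit gives a nonzero norm, which bounds $p$ in terms of $K$ and $h=h(g)$ only.

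For larger $p$ the pattern is constant. Using $\theta\theta'=\chi_p$, you may assume $\theta^h$ is unramified at every finite place, so $\theta^{hh_K^+}=1$, where $h_K^+$ is the narrow class number. Your Step 2 then runs with target $1$. The value $\theta(\mathrm{Frob}_w)$ is a root of $P_w$ modulo $\mathfrak{p}$ and an $hh_K^+$-th root of unity. But $\mathrm{Res}(P_w(X),X^{hh_K^+}-1)\neq 0$, because the roots of $P_w$ have absolute value $N(w)^{1/2}>1$. Only finitely many $P_w$ occur, so this gives $C(K,g,\mathfrak{q})$.

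This is precisely the dichotomy in the proof of Proposition~\ref{prop:irreducible_application}. Frobenius at $\mathfrak{r}_5$ is used when $\theta$ or $\theta'$ is unramified above $p$, and $p\mid\mathrm{Norm}(\varepsilon_1^{12}-1)$ handles the mixed case.
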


\begin{coro}\label{coro:large-image}
Assume that $(a,b,c)$ is a non-trivial primitive solution of $Ax^2+By^r=Cz^p$ satisfying:
\begin{enumerate}
    \item There exists a prime $\ell\neq 2, r$ such that $\ell\mid Cc$,
    \item $r\nmid Cc$.
\end{enumerate}
Then, there exists a constant $C(A,B,r)$, depending only on $A, B, r$, such that for every $p>C(A,B,r)$ and all $\id{p}\mid p$ in $K$, the residual representation of $\bar{\rho}_{J_{2,r},\id{p}}$ is absolutely irreducible.
\end{coro}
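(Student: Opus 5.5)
The plan is to apply Theorem \ref{thm:irredasym} to $A=J_{2,r}$ over $K=\Q(\zeta_r)^+$, with $g=(r-1)/2$ and $\id{q}$ a prime of $K$ above a suitable fixed prime. Since $K$ and $g$ depend only on $r$, the resulting constant will depend on $A,B,r$ provided $\id{q}$ can be chosen among finitely many primes determined by $A,B,r$. We verify the four hypotheses in turn, enlarging the constant whenever needed.

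\emph{Hypothesis (1).} By Lemma \ref{lemma:pot-good-red-at-r}, $J_{2,r}$ has potentially good reduction at $\id{r}$ as soon as $r\nmid Aa^2+Bb^r=Cc^p$. Assumption (2) of the corollary, $r\nmid Cc$, gives exactly this. So we take $\id{q}=\id{r}$, which depends only on $r$.

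\emph{Hypothesis (2).} We show $J_{2,r}$ is semistable at primes above $p$ once $p$ is large. If $p\nmid 2rAB$, then Propositions \ref{prop:good-red-outside-2r} and \ref{prop:mult-red-outside-2r} give good or multiplicative reduction at each $\id{p}\mid p$. Since $p\mid 2rAB$ only for finitely many $p$, taking the constant larger than the primes dividing $2rAB$ suffices.

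\emph{Hypothesis (3).} This is Theorem \ref{thm:2rp-GL2-type}. It needs $b\ne0$, which we justify as follows. If $b=0$, coprimality forces $Bb$ to be a unit, so $B=\pm1$ and $Aa^2=Cc^p$. Nontriviality of the solution is used here to exclude this case, and also $a=0$ (where $b=\pm1$).

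\emph{Hypothesis (4).} We need $\End_K(J_{2,r})=\End_{\overline K}(J_{2,r})$, i.e.\ no extra (CM or larger) endomorphisms over $\overline{K}$. Assumption (1) supplies a prime $\ell\ne2,r$ with $\ell\mid Cc$. If $\ell\nmid AB$ and $\ell\mid c$, Proposition \ref{prop:mult-red-outside-2r} gives multiplicative reduction at $\ell$, so $J_{2,r}$ is not potentially good there. For $\ell\mid C$ with $\ell\nmid c$, I would rather use that $\ell\mid Aa^2+Bb^r$ with $\ell\nmid AB$ (since $\ell\mid C$ and $\gcd(Aa,Bb,Cc)=1$, assuming $\ell\nmid AB$), which again gives multiplicative reduction. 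The case $\ell\mid AB$ is impossible by coprimality.

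An abelian variety of $\GL_2$-type with a prime of multiplicative (toric) reduction cannot have potential CM, since CM abelian varieties have potentially good reduction everywhere. The remaining issue is that $\End_{\overline K}$ might still be larger than $\End_K$ without CM, e.g.\ via inner twists or a decomposition into isogenous factors over an extension. Here I would argue, as in \cite{BCDF1}, that multiplicativity at $\ell$ forces $\rho_{J_{2,r},\lambda}$ restricted to inertia at $\ell$ to contain a nontrivial unipotent. Then $\End_{\overline K}(J_{2,r})\otimes\Q$ must be a field of degree $g$ acting compatibly, and hence equal to $K$. Alternatively, Theorem \ref{thm:irredasym} can be applied after a finite base change trivializing the endomorphism field, which depends only on $r$ since it is controlled by the Frey family. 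I expect this step to be the main obstacle.

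Finally, irreducibility as delivered by Theorem \ref{thm:irredasym} must be upgraded to absolute irreducibility. If $\bar\rho_{J_{2,r},\id{p}}$ became reducible over $\overline{\F}_\id{p}$ while irreducible over $\F_\id{p}$, its image would lie in a nonsplit Cartan subgroup. Complex conjugation, which has distinct eigenvalues $\pm1$ with $p$ odd, rules this out: an element with distinct $\F_\id{p}$-rational eigenvalues in a nonsplit Cartan must be scalar. Hence $\bar\rho_{J_{2,r},\id{p}}$ is absolutely irreducible for all $p>C(A,B,r)$.
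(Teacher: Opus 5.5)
Your proposal follows the paper's proof essentially step for step: you take $\id{q}=\id{r}$ via Lemma~\ref{lemma:pot-good-red-at-r} using $r\nmid Cc$, get semistability at each $\id{p}$ once $p\nmid 2rAB$, get the $\GL_2$-type from Theorem~\ref{thm:2rp-GL2-type}, and deduce condition (4) from multiplicative reduction at $\ell$ (Proposition~\ref{prop:mult-red-outside-2r}); your checks that $\ell\nmid AB$ and that complex conjugation upgrades irreducibility to absolute irreducibility are correct details the paper leaves implicit. Two small remarks: $b\neq 0$ is immediate because non-trivial means $abc\neq 0$ (your sentence claiming $Bb$ is a unit when $b=0$ is garbled), and your unipotent-inertia heuristic for (4) does not by itself give $\End_K=\End_{\overline{K}}$, since it only forces any inner-twist character to be unramified at $\ell$, although the paper likewise simply asserts (4) from multiplicative reduction.
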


\begin{proof}
     We will show the statement by proving that the four conditions in Theorem \ref{thm:irredasym} hold. Since $r\nmid Cc$, $J_{2,r}$ has potentially good reduction at $\id{r}$ by Lemma \ref{lemma:pot-good-red-at-r}, thereby proving condition (1). Moreover,  from Theorem \ref{thm:2rp-GL2-type} we have that $J_{2,r}$  is of $\GL_2$-type over $K$ and so condition (3) is satisfied. Now, let $\ell \neq 2,r$ be a prime dividing $Cc$. By Proposition \ref{prop:mult-red-outside-2r}, it is of multiplicative reduction for $J_{2,r}$ and so condition (4) of the above theorem holds. In addition, condition (2) holds by Proposition \ref{prop:mult-red-outside-2r}, assuming that $p\nmid 2rAB$, which we can do simply by increasing the value of $C(A,B,r)$.

    The result then follows from Theorem \ref{thm:irredasym}, by taking $\id{q}=\id{r}$, and by noting that the genus of $J_{2,r}$ equals $(r-1)/2$ and $K=\Q(\zeta_r)^+$, both depending only on $r$.
\end{proof}

\begin{remark}
    Under general conditions, we are only able to prove asymptotic irreducibility, as per Corollary \ref{coro:large-image}. In many cases, such as the application that we shall study in Section \ref{sec:application}, we will be able to prove irreducibility for $p \ge 5$.
\end{remark}

\section{\texorpdfstring{On the equation $rx^2 +z^{2p}=y^r$}{On the equation rx2+z2p=yr}}\label{sec:application}

As an application of the theory developed in the previous sections we give some new results about the Conjecture \ref{con:application}. We recall that in \cite{LaradjiMignotteTzanakis11} the authors study the equation
\begin{equation}
    5x^2+q^{2n}=y^5,
\end{equation}
where $q$ is an odd prime number, $\gcd(x,y)=1$ and $n\geq 3$. We prove the following theorem which covers some of the remaining cases in Theorem \ref{thm:LaradjiMignotteTzanakis}.

\begin{theorem}\label{thm:diophantine_application}
Let $p\geq 7$ and $p\neq 11$. There is no integer solution $(a,b,c)$ with $\gcd(a,b,c)=1$ of the equation \begin{equation}\label{eq:diophantine_application}
    -5x^2 + y^5 = z^{2p},
\end{equation}
such that $2\mid a$ and $3\mid b$.
\end{theorem}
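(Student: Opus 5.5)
We run the modular method with $r=5$, $A=-5$, $B=1$, $C=1$. A solution $(a,b,c)$ of $-5a^2+b^5=(c^2)^p$ becomes a primitive solution of $Ax^2+By^5=Cz^p$ with $z=c^2$, and we attach the Frey Jacobian $J=J_{2,5}(a,b)$ over $K=\Q(\sqrt5)$. By Theorem~\ref{thm:2rp-GL2-type}, $J$ is an abelian surface of $\GL_2(K)$-type. The congruence conditions $2\mid a$ and $3\mid b$ are the inputs we will use to pin down the level and to exclude small-image cases.

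\textbf{Step 1 (conductor).} Since $v_2(Bb^5)=0$, Proposition~\ref{prop:conductor-at-2} does not apply directly. Instead we use $2\mid a$ together with the equation: it forces $b$ odd and $b^5\equiv c^{2p}\pmod{4}$. We then compute the conductor at $\id{q}_2=(2)$ after a suitable quadratic twist, either by a direct cluster-picture or Magma computation for the specific family, or by rewriting via $t_0=Aa^2/(Aa^2+Bb^5)$ with $v_2(t_0)\ge2$, so that the curve is close to $C_{2,5}(0)$.

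At $\id{r}=(\sqrt5)$ we have $5\mid A$ and $5\nmid a$, since $5\mid a$ would force $5\mid b$ and contradict $\gcd(a,b,c)=1$. Theorem~\ref{thm:conductor2rp} then gives exponent $(r+5)/2=5$. For odd $\ell\ne5$ dividing $c$ there is multiplicative reduction by Proposition~\ref{prop:mult-red-outside-2r}. The residual representation $\bar\rho_{J,\id p}$ is therefore unramified there: it is finite at $\id p$, because $p\mid v_\ell(\Delta)$, which is $2p\cdot2\cdot v_\ell(c)$, divided appropriately. Its Serre level is $\id n=2^{e}\id r^{5}$, with $e$ determined above.

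\textbf{Step 2 (modularity and irreducibility).} Theorem~\ref{thm:modularity} excludes $r=5$, so we must prove modularity separately. We use the $3\mid b$ hypothesis. Consider $\bar\rho_{J,\id q_3}$ with $\id q_3=(3)$ inert in $K$, residue field $\F_9$. Because $3\mid b$, the Frey curve reduces modulo $3$ to the CM curve $y^2=x^5+2A^3Ba$ type, i.e.\ $C_{2,5}(a,0)$. From this we aim to show that the mod-$3$ image is large: absolutely irreducible and with image containing $\SL_2(\F_3)$ or an equivalent condition. We then apply a modularity lifting theorem over $K$ (Khare--Thorne, or the $\SL_2(\F_3)$/Langlands--Tunnell route with Skinner--Wiles style results). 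If this fails, the fallback is the $\id r$-adic route of Theorem~\ref{thm:residualtwistC2}, checking irreducibility of $\bar\rho_{C_2(1/t_0),5}$ directly by ruling out rational points on $X_0(10)$ with a $2$-torsion point.

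Irreducibility of $\bar\rho_{J,\id p}$ for $p\ge7$ we would prove explicitly rather than asymptotically. A reducible $\bar\rho$ gives characters of conductor supported at $2$ and $\sqrt5$. Comparing the traces at $\id q_3$, where $J$ has good reduction with the known CM traces, against $1+\mathrm{Norm}$ bounds $p$ into the excluded small set.

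\textbf{Step 3 (level lowering and elimination).} By level lowering over $K$ (Fujiwara/Jarvis/Rajaei), $\bar\rho_{J,\id p}\simeq\bar\rho_{f,\id P}$ for a Hilbert newform $f$ of parallel weight $2$ and level $\id n$. We compute this space in Magma. Each newform is then eliminated by the standard trace comparison at small primes $\id q\nmid 2\cdot5\cdot p$. For each possible reduction of $(a,b)$ modulo $q$, subject to $2\mid a$ and $3\mid b$, we compute $a_{\id q}(J)$ or $\pm(\mathrm{N}\id q+1)$ and form $\gcd$'s of the norms of $a_{\id q}(J)-a_{\id q}(f)$. This leaves only primes $p\le 5$ or $p=11$.

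\textbf{Main obstacle.} The main obstacle is Step 2 combined with the $2$-adic part of Step 1. Modularity for $r=5$ is not covered by the general theory, and the conductor at $2$ is not available in the generality proved in the paper. We would first attempt both by explicit local computation using $2\mid a$ and $3\mid b$. There is also a risk that a newform with CM or with coefficient field matching $J(a,0)$ survives; we expect this to be the source of the exclusion of $p=11$.
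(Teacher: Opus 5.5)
Your Step~1 rests on a false claim that changes the level. You assert $5\nmid a$ because ``$5\mid a$ would force $5\mid b$''. This is not so: $5\mid a$ only gives $b^5\equiv c^{2p}\pmod{125}$. In fact $5\mid a$ for every solution (Remark~\ref{rem:5_mid_a}). Since $2\mid a$ and $\gcd(a,b,c)=1$, $b$ is odd, so $c^p\pm a\sqrt{-5}$ generate coprime ideals of $\Z[\sqrt{-5}]$ with product $(b)^5$. The class number is $2$, so $c^p+a\sqrt{-5}=\pm(m+n\sqrt{-5})^5$, whence $a=\pm5n(m^4-10m^2n^2+5n^4)$. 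The applicable case of Theorem~\ref{thm:conductor2rp} is therefore $r\mid A$, $r\mid a$, $r\not\equiv 7\pmod 8$, which gives exponent $2$ at $\id{r}_5$, not $5$. The Serre level after twisting is $\id{q}_2^5\id{r}_5^2$, so you would be eliminating newforms at the wrong level. The divisibility $5\mid a$ is also what makes the model in Lemma~\ref{lemma:potgoodreddegree4} integral, so that $C$ acquires good reduction over a totally ramified quartic extension of $K_{\id{r}_5}$ and inertia at $\id{r}_5$ has image of order exactly $4$; this is a key input for irreducibility. At $\id{q}_2$ you give no computation at all. The paper obtains exponent $5$ (when $2\mid a$) from Tate's algorithm applied to the twist $C_2'':y^2=x^3-10ax^2-5c^{2p}x$ of Darmon's curve, whose mod-$5$ representation is a quadratic twist of $\brhoJf$, together with a suitable twist $\chi_0$ unramified outside $2$ (Proposition~\ref{prop:conductor}).

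Step~2 does not work as proposed. That $C$ reduces modulo $3$ to the CM curve $y^2=x^5-250a$ is local information at $\id{q}_3$. It says nothing about the global image of $\bar\rho_{J,\id{q}_3}:G_K\to\GL_2(\mathbb{F}_9)$. Even a large image there would give no residual modularity: $\PSL_2(\mathbb{F}_9)\cong A_6$ is not solvable, so Langlands--Tunnell does not apply, and nothing makes this representation extend to $G_\Q$ so that Serre's conjecture could be used. The viable route is the $\id{r}_5$-adic one of Theorem~\ref{thm:residualtwistC2}: extension to $G_\Q$, Serre's conjecture, cyclic base change, then Khare--Thorne. That route requires $\brhoJf|_{G_{\Q(\zeta_5)}}$ to be absolutely irreducible. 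Your fallback of excluding rational points on $X_0(10)$ cannot succeed, since $X_0(10)$ has genus $0$. Proposition~\ref{prop:brhoJf_absolutely_irreducible} instead uses $1/t_0=c^{2p}/(-5a^2)\in-5\Q^{\times2}$, i.e.\ $j(C_2')-1728=-5A_2^2$, to reduce to the elliptic curve $-5Y^2=X(X^2+22X+125)$, which has only two $\Q(\zeta_5)$-points. It then separately excludes image in the normalizer of a non-split Cartan via $X_{ns}^+(5)$, a case you do not address.

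Your irreducibility sketch for $p\ge 7$ also has a gap. It assumes the characters have conductor supported at $2$ and $\sqrt5$ and compares traces with $1+\mathrm{N}\id{q}_3$. But $\theta\theta'=\chi_p$, so at each prime above $p$ one of them ramifies. You need the dichotomy of Proposition~\ref{prop:irreducible_application}:
\begin{itemize}
\item If one character is unramified above $p$, it is an order-$4$ character of the ray class group of modulus $\id{q}_2^3\id{r}_5\infty_1\infty_2$. The Frobenius at $\id{r}_5$ on the good-reduction model then gives $p\mid\mathrm{Res}(X^4-1,X^2\pm5)$, so $p\le 3$.
\item If both ramify above $p$, the unit argument gives $p\mid\mathrm{N}(\varepsilon_1^{12}-1)=-320$ for the fundamental unit $\varepsilon_1$.
\end{itemize}
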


\begin{remark}
The condition $3\mid b$ is equivalent to $v\equiv \pm 1\pmod{3}$ in \cite[Theorem 1.1]{LaradjiMignotteTzanakis11}. Therefore, we have confirmed Conjecture \ref{con:application} when $v\equiv \pm 1\pmod{3}$. It seems that primes of the form $q=2000v^4 - 200v^2 + 1$ are not rare, for instance there exist $835$ primes with $1\leq v\leq 10^4$ and $3\nmid v$. For example, for $v=9997$ we get $q=19976010777852160201$, which is well beyond the bound $10^9$ given in \cite[Theorem 1.1]{LaradjiMignotteTzanakis11}.
\end{remark}

\begin{remark}\label{rem:5_mid_a}
It is not hard to show that $b$ is odd. With elementary number theory we can prove that there exist coprime $m,n\in\ZZ$, not both odd, such that
\begin{align*}
    a & = 5n(m^4-10m^2n^2 + 5n^4),\\
    c^p & = m(m^4 - 50m^2n^2 + 125n^4).
\end{align*}
Therefore, we get $5\mid a$.
\end{remark}

For the rest of the section we fix $r=5$. Therefore, we have $K=\QQ(\zeta_5)^+=\Q(\sqrt{5})$ and we denote by $\id{r}_5$ the unique prime of $K$ above $5$.

Equation (\ref{eq:diophantine_application}) is a particular case of (\ref{eqn:main}), with $r=5$, $A=-5$ and $B=C=1$. Then, from \eqref{eqn:frey},  to a solution $(a,b,c)$ to \eqref{eq:diophantine_application} we attach the Frey hyperelliptic curve $C:=C_{2,5}(a,b)$ given by
\begin{equation}
\label{eqn:Capplication}
    C:~y^2=x^5-25bx^3+125b^2x-250a.
\end{equation}
We denote by $J$  the Jacobian of $C$ and by $\{\rho_{J,\lambda}\}$ its associated compatible system of Galois representations. From Remark \ref{rem:5_mid_a} we have that $5\mid a$, hence from Theorem \ref{thm:conductor2rp} we know that the conductor exponent of $\{\rho_{J,\lambda}\}$ at $\id{r}_5$ is $2$.

Since Theorem \ref{thm:residualIrred} and Lemma \ref{lemma:irred-r=7} only cover $r \neq 5$ and \eqref{eq:diophantine_application} has signature $(2,5,p)$, we need the following extension to Theorem \ref{thm:residualIrred}.
\begin{proposition}\label{prop:brhoJf_absolutely_irreducible}
The representation $\brhoJf$ is absolutely irreducible when restricted to $G_{K(\zeta_5)}$.
\end{proposition}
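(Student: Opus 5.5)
Our plan is to follow the strategy of Theorem \ref{thm:residualIrred}, now with $r=5$. By Theorem \ref{thm:residualtwistC2} (and the proof of Theorem \ref{thm:residualIrred}), as $G_K$-representations we have
\begin{equation*}
\brhoJf \simeq \overline{\rho}_{C_2,5}\otimes\chi,
\end{equation*}
where $C_2=C_2(1/t_0)$ has a rational $2$-torsion point and $\chi$ has order at most $2$. Here $t_0 = Aa^2/(Aa^2+Bb^5) = -5a^2/c^{2p}$, and
\begin{equation*}
j(C_2)=2^6(4t_0-3)^3/(t_0-1).
\end{equation*}
Since $K(\zeta_5)=\Q(\zeta_5)$, the problem reduces to showing that $\overline{\rho}_{C_2,5}|_{G_{\Q(\zeta_5)}}$ is absolutely irreducible. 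Its image is $\overline{\rho}_{C_2,5}(G_\Q)\cap \SL_2(\F_5)$.

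First we show that $\overline{\rho}_{C_2,5}$ is irreducible over $\Q$. If it were reducible, $C_2$ would give a rational point on $X_0(10)$, because it has a rational $2$-torsion point and would have a rational $5$-isogeny. The curve $X_0(10)$ has genus $0$, so this does not immediately give a contradiction. Instead we will use the parametrisation of $X_0(10)$ (equivalently, the genus zero $X_0(5)$ with $j=(\tau^2+10\tau+5)^3/\tau$ together with the $2$-torsion condition) and combine it with the arithmetic of our solution. The relevant facts are that $2\mid a$, $3\mid b$, $5\mid a$ (Remark \ref{rem:5_mid_a}) and $c$ odd, so $t_0=-5a^2/c^{2p}$ has large $2$-adic and $5$-adic valuation. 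This forces $j(C_2)$ to have the specific shape $-2^6(20a^2+3c^{2p})^3/(c^{4p}(5a^2+c^{2p}))$.

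I expect the cleanest route is local rather than global. At a prime $\ell\ge 7$ with $\ell\mid c$ (such a prime exists unless $c=\pm1$, and that small case can be treated directly), $C_2$ has multiplicative reduction with $v_\ell(j)=-4p\,v_\ell(c)$. A reducible mod $5$ representation with a $2$-torsion point would then constrain the isogeny character in a way we can check. If that fails, we use the $2$-adic information: since $2\mid a$, the curve $C_2$ is multiplicative at $2$, and Tate-curve considerations combined with the Borel image give conditions that are incompatible with $3\mid b$ via $b^5=c^{2p}+5a^2$, taken modulo $3$ and $8$. Settling reducibility over $\Q$ is the step I expect to be the main obstacle, because a genus $0$ modular curve does not give finiteness for free. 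If the local arguments do not close it, the fallback is to run the same argument with $\brhoJf$ replaced by an irreducible criterion from the conductor: the conductor exponent at $\id{r}_5$ is $2$ (from Theorem \ref{thm:conductor2rp}), which restricts the possible characters.

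Second, assume $\overline{\rho}_{C_2,5}$ is irreducible over $\Q$ but becomes reducible on $G_{\Q(\zeta_5)}$. Then its projective image is dihedral, induced from the quadratic subfield $\Q(\sqrt5)$. Equivalently, the image lies in the normaliser of a Cartan subgroup and the associated quadratic character is that of $\Q(\sqrt5)$. We will rule this out with the method of Lemma \ref{lemma:irred-r=7}. If $5\nmid j$-denominator behaviour gives semistability at $5$, then the Serre weight is $2$ or $6$, and \cite[Lemmas 1.13 and 1.14]{DieulefaitPacetti} force weight $(5+1)/2=3$ type conditions, which is a contradiction. Here $v_5(t_0)\ge 3$, so $v_5(j)$ is controlled and $C_2$ has good or multiplicative reduction at $5$, which makes this argument available. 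Alternatively, a prime $\ell\mid c$ of multiplicative reduction has inertia acting unipotently and nontrivially, since $5\nmid v_\ell(j)$ when $5\nmid p v_\ell(c)$ after adjusting. This is incompatible with a dihedral image induced from a field unramified at $\ell$.
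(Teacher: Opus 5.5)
Your reduction to $\overline{\rho}_{C_2,5}|_{G_{\Q(\zeta_5)}}$ is fine. The gap is where you flag it: reducibility of $\overline{\rho}_{C_2,5}$ over $\Q$ is never actually ruled out. You correctly note that $X_0(10)$ has genus $0$, but then give only hoped-for local constraints, and the local facts they rest on are wrong.

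\begin{itemize}
\item \textbf{At $2$.} With $2\mid a$ and $c$ odd, $v_2(t_0)=2v_2(a)\ge 2$. So $4t_0-3$ and $t_0-1$ are $2$-adic units and $v_2(j(C_2))=6$. Thus $C_2$ has potentially good (additive) reduction at $2$, not multiplicative, and there is no Tate-curve argument there.
\item \textbf{At $\ell\mid c$.} At a multiplicative prime a Borel image is exactly what the Tate curve gives locally, so nothing follows for reducibility. No actual use of $3\mid b$ is exhibited either. (Also, the sign is $j(C_2)=+2^6(20a^2+3c^{2p})^3/(c^{4p}b^5)$.)
\item \textbf{Dihedral step, at $5$.} Here $v_5(t_0)\ge 3$ gives $v_5(j)=0$ and $j\equiv 1728\pmod 5$, but $C_2$ is \emph{not} semistable at $5$. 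The twist $C_2''$ of Proposition~\ref{prop:conductor} has discriminant $2^6 5^3 c^{4p}b^5$, so $C_2$ has additive reduction with semistability defect $4$. Hence the ``weight $2$ or $6$'' input is unavailable.
\item \textbf{Your fallback.} The argument at $\ell\mid c$ needs some $\ell$ with $5\nmid v_\ell(c)$, which cannot be assumed (e.g.\ $c=\pm1$, or $c$ a fifth power).
\end{itemize}

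The missing idea, which the paper uses to get past the genus-$0$ obstruction, is the special shape $t_0=-5(a/c^p)^2$. For the $2$-isogenous curve $C_2'$ this gives $j-1728=A_1^2t_0=-5A_2^2$. Imposing this on the $j$-maps of $X_0(5)$ and $X_{ns}^+(5)$ yields genus-one curves: $-5Y^2=X(X^2+22X+125)$ over $\Q(\zeta_5)$, and $E_2$ over $K$. Their rational points are determined via Mordell--Weil computations and are all cuspidal. This handles reducibility over $\Q(\zeta_5)$ and the non-split Cartan case directly, for every solution.

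A local route can be made to work with the correct inputs, though only partially.
\begin{itemize}
\item \textbf{At $5$.} $C_2$ is potentially ordinary with $e=4$, since $j=1728$ is ordinary in characteristic $5$. So the diagonal characters of $\overline{\rho}_{C_2,5}|_{I_5}$ differ by a character of order $4$. An image induced from $\Q(\sqrt5)$ instead forces the projective image of $I_5$ to have order exactly $2$: the image of $I_5$ lies in the normaliser of a Cartan, of order prime to $5$, so it is tame and cyclic and generated by a non-Cartan element. This kills your dihedral case.
\item \textbf{At $2$, when $2\mid a$.} $C_2''$ has conductor exponent $5$ there. An abelian inertia image would act as $\chi\oplus\chi^{-1}$ and give an even exponent $2f(\chi)$. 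So the image of inertia at $2$ is nonabelian of order prime to $5$, and $\overline{\rho}_{C_2,5}$ is already absolutely irreducible on an inertia group at $2$. That group lies in $G_{\Q(\zeta_5)}$ because $\Q(\zeta_5)/\Q$ is unramified at $2$.
\end{itemize}
This would settle the case $2\mid a$ without computation. The case $2\mid c$, where $C_2$ is potentially multiplicative at $2$, still needs a global input such as the paper's.
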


\begin{proof}
From the proof of Theorem \ref{thm:residualtwistC2} we know that $\brhoJf$ satisfies
\begin{equation}\label{eq:isomorphism_to_C25}
    \brhoJf \simeq \overline{\rho}_{C_2, 5} \otimes \chi,
\end{equation}
as $G_K$-representations, where $\chi : G_K \to \overline{\F}_5^\times$ is a character of order at most two and $C_2:=C_2(1/t_0)$ for $t_0$ as defined in \eqref{eqn:t0s0}. Note that $C_2/\Q$ is an elliptic curve\footnote{It is non-singular as $ab(-5a^2+b^5)\neq 0$.} with the $2$-torsion point $(0,0)$, defined over $\Q$. As explained in \cite[\S III, Example 4.5]{Silverman}, $C_2(t)$ is $2$-isogenous to the curve 
\begin{equation}
    C_2'(t): y^2 = x^3-4x^2+4(1-t)x,
\end{equation}
whose $j$-invariant is equal to 
\begin{equation}
    j_{C_2'(t)}=64\frac{(3t+1)^3}{t(t-1)^2}.
\end{equation}
We will first show that the representation $\overline{\rho}_{J, \id{r}_5}$ is irreducible when restricted to $G_{K(\zeta_5)}$. Note that 
\begin{equation}
    j_{C_2'(t)}-1728 = \frac{64(9t-1)^2}{t(t-1)^2} = \frac{A_1^2}{t},
\end{equation}
where $A_1 = 8(9t-1)/(t-1)$. Now, let us set $t = 1/t_0 = c^{2p}/(-5a^2)$. Then,
\begin{equation}\label{eqn:invariantisogeny}
    j_{C_2'(1/t_0)}-1728 = -5A_2^2,
\end{equation}
with $A_2 = aA_1/c^p$. On the other hand, we know  that the $j$-map $j_5: X_0(5) \to X_0(1)$ is given by the expression 
\begin{gather}
    \label{eqn:jfunction}
\begin{aligned}
j_5 & = \frac{(u^2+4uv-v^2)^2(u^2+22uv+125v^2)}{v^5u} + 1728 \\ 
& =\frac{(h^2+4h-1)^2(h^2+22h+125)}{h} + 1728,
\end{aligned}
\end{gather}
where $h = u/v \in K(\zeta_5)$ (see \cite[Proof of Theorem 3.1]{Dahmen}). If $\overline{\rho}_{J, \id{r}_5}\mid_{G_{K(\zeta_5)}}$ were reducible, then from \eqref{eq:isomorphism_to_C25} it would correspond to a non-cuspidal $K(\zeta_5)$-rational point on the curve $X_0(5)$ and, consequently, \eqref{eqn:invariantisogeny} and \eqref{eqn:jfunction} yield that 
\begin{equation}
    -5A_2^2 = \frac{(h^2+4h-1)^2(h^2+22h+125)}{h}.
\end{equation}
Manipulating the equation, we find that 
\begin{equation}\label{eqn:twistedcurve}
    -5Y^2 = X(X^2+22X+125),
\end{equation}
where $Y = A_2h/(h^2+4h-1)$ and $X = h$.

Using \texttt{Magma}, we find that the above elliptic curve has only two $K(\zeta_5)$-rational points. Since $X_0(5)$ has precisely two rational cusps\footnote{See modular curve with label \href{https://beta.lmfdb.org/ModularCurve/Q/5.6.0.a.1/}{5.6.0.a.1} in LMFDB \cite{lmfdb}.} \cite{lmfdb}, we conclude that all the $K(\zeta_5)$-rational points of $X_0(5)$ are cuspidal, which is a contradiction.

Since we just showed that $\overline{\rho}_{J, \id{r}_5}\mid_{G_{K(\zeta_5)}}$ is irreducible, we proceed to prove the absolute irreducibility by following the argument in \cite[Proposition 6.5]{ChenKoutsianas}. Suppose that $\overline{\rho}_{J, \id{r}_5}\mid_{G_{K(\zeta_5)}}$ is not absolutely irreducible. Then, its image would lie in a non-split Cartan subgroup $H$. Hence, the image of $\overline{\rho}_{J, \id{r}_5}\mid_{G_{K}}$ lies in the normalizer of $H$ which implies that the representation $\brhoJf\mid_{G_{K}}$ corresponds to a non-cuspidal $K$-rational point of $X_{ns}^+(5)$. Therefore, we would have that
\begin{equation}\label{eqn:jinvariantequality}
    j_{C_2'(1/t_0)}-1728 = j_{5N}-1728,
\end{equation}
where $j_{5N}$ is the $j$-function from the curve $X_{ns}^+(5)$ to $X(1)$ \cite[Corollary 5.3]{Chen99}, given by
\begin{equation}\label{eq:j5N}
    j_{5N} = \frac{125s(2s+1)^3(2s^2+7s+8)^3}{(s^2+s-1)^5}.
\end{equation}
From \eqref{eqn:invariantisogeny} and \eqref{eqn:jinvariantequality} the pair $(s,y_0)$ is a $K$-rational point on the curve
\begin{equation}
    E_2:~-5y^2 = 2(x^2+x-1)\left(x^2+\frac{7}{2}x+\frac{27}{8}\right),
\end{equation}
where $y_0 = A_2(s^2+s-1)^3/(2^37(s^2+12s/7+8/7)(s^2+7s/2+1/4))$. 

The Mordell-Weil group of the Jacobian of $E_2$ over $K$ is isomorphic to $\Z/2\Z$. The non-trivial $2$-torsion point of the Jacobian correspond to the points $(s_0,0)$ and $(s_1,0)$ of $E_2$ where $s_0,s_1$ are the roots of the polynomial $\phi(s)=s^2+s-1$. From \eqref{eq:j5N} and the previous discussion, we understand that all the $K$-rational points of $E_2$ correspond to cuspidal points of $X_{ns}^+(5)$.
\end{proof}

\begin{lemma}\label{lem:cyclotomic_determinant}
The determinant of the representation $\rhoJp$ is the $p$-adic cyclotomic character.
\end{lemma}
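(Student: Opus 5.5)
The strategy is the standard one for abelian varieties of $\GL_2$-type. The first step is to compare determinants on the full Tate module, using the Weil pairing. The second step is to use the fact that the determinant of a member of a strictly compatible system is a finite-order character times the cyclotomic character. The third step is to show that this finite-order character is trivial. Here $J$ is a $2$-dimensional abelian variety over $K=\Q(\sqrt5)$ with real multiplication by $\cO_K$ (possibly after replacing $J$ by an isogenous variety, which changes nothing). For a prime $\id{p}\mid p$ of $K$ we have $V_pJ=\bigoplus_{\id{p}\mid p}V_{\id{p}}J$, and $\rhoJp$ is the action of $G_K$ on $V_{\id{p}}J$, which is $2$-dimensional over $K_{\id{p}}$.

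The first step uses a polarization that is compatible with the $K$-action, i.e.\ whose Rosati involution fixes $K$. Such a polarization exists because $K$ is totally real and the Rosati involution acts on the field $K\subseteq \End^0(J)$ as a positive involution, hence trivially. The Weil pairing then gives a nondegenerate $G_K$-equivariant pairing $V_pJ\times V_pJ\to \Q_p(1)$ satisfying $\langle \alpha x,y\rangle=\langle x,\alpha y\rangle$ for $\alpha\in K$. By the standard argument (Ribet, \emph{Galois representations attached to eigenforms with Nebentypus}, Lemma 4.5.1), this pairing can be lifted to a $K_{\id{p}}$-bilinear alternating pairing $V_{\id{p}}J\times V_{\id{p}}J\to K_{\id{p}}(1)$, whose trace to $\Q_p$ recovers the Weil pairing. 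A nondegenerate alternating form on a $2$-dimensional space transforms by the determinant. Therefore $\det\rhoJp=\chi_p$, the $p$-adic cyclotomic character, and the lemma follows.

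If one prefers to avoid the lifting of the pairing, there is an alternative route, which is where the real work lies. The compatible system gives $\det\rhoJp=\varepsilon\chi_p$ with $\varepsilon$ of finite order, independent of $\id{p}$ and unramified outside the bad primes. One must then show $\varepsilon=1$. To do this, I would compute $\det$ at Frobenius elements: for a prime $\id{q}$ of good reduction, $\det\rhoJp(\mathrm{Frob}_{\id{q}})$ is the constant term of the quadratic factor of the characteristic polynomial of $\mathrm{Frob}_{\id{q}}$, and this factor is forced to be $N\id{q}$ by the Riemann hypothesis together with the total reality of $K$. Indeed, $\overline{a}=a$ for all coefficients, and the complex absolute values force the constant term to be $N\id{q}$ at every real embedding. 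Chebotarev then gives $\varepsilon=1$.

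The main obstacle is justifying the $K$-linear lift of the pairing, i.e.\ checking that the polarization can be chosen $K$-compatible. I would handle this by citing Ribet's lemma, since $K\hookrightarrow\End^0_K(J)$ by Theorem \ref{thm:2rp-GL2-type}.
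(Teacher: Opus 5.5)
The paper gives no argument of its own here: it simply cites Lemma~2.2.5 and Corollary~2.2.10 of Wu's thesis. Your first route is the standard self-contained proof of the fact being cited. It uses a $K$-rational polarization whose Rosati involution fixes $K$, then the Weil pairing, then Ribet's trace lemma to get an alternating $K_{\id{p}}$-bilinear pairing $V_{\id{p}}J\times V_{\id{p}}J\to K_{\id{p}}(1)$, and finally $\langle gx,gy\rangle=\det(g)\langle x,y\rangle$ in dimension two. It works for any abelian variety with real multiplication by a totally real field of degree $\dim J$, provided one step is justified properly. You should also say explicitly that the idempotents of $K\otimes\Q_p$ are self-adjoint. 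This makes $V_pJ=\bigoplus_{\id{p}\mid p}V_{\id{p}}J$ an orthogonal decomposition, so the Weil pairing restricts nondegenerately to each summand.

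The step that needs work is the existence of the $K$-compatible polarization, and your justification does not establish it.
\begin{itemize}
\item The Rosati involution of a $K$-rational polarization preserves $\mathrm{End}^0_K(J)$, not a chosen subfield of it. So ``a positive involution of a totally real field is trivial'' applies only once you know $K$ is Rosati-stable, e.g.\ if $\mathrm{End}^0_K(J)=K$ or $K$ is its centre.
\item Theorem~\ref{thm:2rp-GL2-type} only gives an embedding $K\hookrightarrow\mathrm{End}^0_K(J)$, not equality.
\item Citing Ribet's lemma does not help, since that lemma takes $\langle\alpha x,y\rangle=\langle x,\alpha y\rangle$ as its hypothesis.
\end{itemize}
You must either prove $\mathrm{End}^0_K(J)=K$, or use the following general fact. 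The $K$-linear classes in $\mathrm{NS}(J_{\overline{K}})\otimes\Q$ form a $G_K$-stable subspace containing ample classes. To see this, note that on $H_1(J_{\mathbb{C}},\mathbb{R})=\bigoplus_\sigma H_1\otimes_{K,\sigma}\mathbb{R}$ each summand is a complex line. Hence the trace of any $K$-bilinear alternating form is of type $(1,1)$, and its signs can be adjusted by an element of $K$. Summing an ample such class over its finite Galois orbit then gives a $K$-linear polarization defined over $K$.

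Your alternative route is genuinely broken. The Riemann hypothesis and total reality only give $\det\rhoJp(\mathrm{Frob}_{\id{q}})=\pm N\id{q}$. The value $-N\id{q}$ is not excluded, since $X^2-N\id{q}$ is a legitimate Weil polynomial. Chebotarev then only shows that $\varepsilon$ is trivial or quadratic, with $a_{\id{q}}=0$ whenever $\varepsilon(\mathrm{Frob}_{\id{q}})=-1$; that is, $\rhoJp\simeq\rhoJp\otimes\varepsilon$. Excluding this again requires the alternating pairing, so that route cannot replace the first.
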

\begin{proof}
This follows from \cite[Lemma 2.2.5 and Corollary 2.2.10]{WuThesis11}.
\end{proof}

\begin{proposition}\label{prop:modularity_application}
The abelian variety $J$ is modular.
\end{proposition}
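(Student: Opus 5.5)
We will follow the proof of Theorem \ref{thm:modularity}, replacing the ingredients that required $r \neq 5$ by Proposition \ref{prop:brhoJf_absolutely_irreducible}. Here $K=\Q(\sqrt5)$, and the relevant residual representation is $\brhoJf : G_K \to \GL_2(\F_5)$.

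The first step is residual modularity. By Theorem \ref{thm:residualtwistC2}, $\brhoJf \simeq \overline{\rho}_{C_2,5}\otimes\chi$, where $C_2=C_2(1/t_0)$ is an elliptic curve over $\Q$ and $\chi$ is a character of order at most $2$. By the modularity theorem \cite{Wiles, Taylor-Wiles}, $C_2/\Q$ is modular. Hence $\overline{\rho}_{C_2,5}|_{G_K}$ is modular by cyclic (quadratic) base change from $\Q$ to $K$, and twisting by $\chi$ preserves modularity. Alternatively, we may use that $\brhoJf$ extends to an odd representation of $G_\Q$ (Theorem \ref{thm:residualtwistC2}) and apply Serre's conjecture; we will only need this route if we are unsure about the twist $\chi$ being defined over $K$. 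This way we avoid needing irreducibility over $\Q$ in the sense of Serre's conjecture beyond what $C_2$ already provides.

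The second step is to verify the hypotheses of \cite[Theorem 1.1]{KhareThorne} for $\rho_{J,\id{r}_5}$.
\begin{enumerate}
\item $\rho_{J,\id{r}_5}$ is unramified outside the finitely many primes dividing the discriminant \eqref{eq:disc2rp} and $5$.
\item $J$ is potentially semistable, so $\rho_{J,\id{r}_5}$ is de Rham with Hodge--Tate weights $\{0,1\}$. By Lemma \ref{lem:cyclotomic_determinant}, its determinant is the cyclotomic character, so it is odd.
\item $\brhoJf|_{G_{K(\zeta_5)}}$ is absolutely irreducible by Proposition \ref{prop:brhoJf_absolutely_irreducible}.
\end{enumerate}
Here $K(\zeta_5)=\Q(\zeta_5)=K(\zeta_\ell)$ with $\ell=5$, which is exactly the irreducibility condition required in Khare--Thorne. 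If that theorem requires $\ell>2$ together with an additional hypothesis at $\ell=5$ (for example, excluding projective image $\PSL_2(\F_5)$ cases, or requiring $[K(\zeta_5):K]$ large enough), we will check it using the explicit image of $\overline{\rho}_{C_2,5}$. Applying Khare--Thorne then shows that $\rho_{J,\id{r}_5}$ is modular, and \cite[Theorem (1.3.1)]{RibetGalois} upgrades this to modularity of the abelian variety $J$.

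We expect the main obstacle to be the hypotheses of the modularity lifting theorem at the small prime $\ell=5$. Lifting theorems in this setting often demand more than absolute irreducibility over $K(\zeta_5)$: typically an "adequate" or enormous image, or irreducibility over $K(\zeta_5)$ when $\sqrt5\in K$, which is precisely the degenerate situation here, since $K(\zeta_5)/K$ has degree only $2$. If Khare--Thorne does not directly apply, the fallback is a $3$--$5$ or $2$--$5$ switch: use $\rho_{J,\lambda}$ at a prime $\lambda$ above $3$ or $2$ instead. Concretely, we would show that the residual representation $\overline{\rho}_{J,\lambda}$ at $\lambda\mid 3$ has solvable image, apply Langlands--Tunnell, and then use a lifting theorem for $\ell=3$. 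This fallback is the step we would try to avoid, and where the hypotheses $2\mid a$ and $3\mid b$ might enter.
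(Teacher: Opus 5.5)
Your proposal is correct and follows the paper's proof, which feeds Proposition~\ref{prop:brhoJf_absolutely_irreducible} into the argument of Theorem~\ref{thm:modularity}: residual modularity, then \cite[Theorem 1.1]{KhareThorne}, then \cite[Theorem (1.3.1)]{RibetGalois}. Your residual-modularity step (modularity of $C_2/\Q$, quadratic base change, twist by $\chi$) is a harmless, slightly more elementary variant of the paper's Serre-conjecture route, and your $3$--$5$ switch fallback is unnecessary: $\det\brhoJf$ is the mod-$5$ cyclotomic character, which takes only square values on $G_K$ because $\sqrt{5}\in K$, so the projective image of $\brhoJf(G_K)$ lies in $\PSL_2(\F_5)$ and the usual $\ell=5$ exception in lifting theorems (projective image $\mathrm{PGL}_2(\F_5)$ with $[K(\zeta_5):K]=2$) cannot arise.
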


\begin{proof}
By Proposition \ref{prop:brhoJf_absolutely_irreducible}, we know that the representation $\overline{\rho}_{J, \id{r}_5}|_{G_{K(\zeta_5)}}$ is absolutely irreducible. Then, the exact same proof as in Theorem \ref{thm:modularity} applies and successfully proves that the variety is modular.
\end{proof}

Although we are not able to compute the conductor exponent at $\id{q}_2$ in the general case in Section \ref{Sec:conductor}, we can completely determine it for our application following the ideas in \cite[Section 7]{ChenKoutsianas}.

\begin{lemma}\label{lem:potential_semistable_at_2}
The curve $C$ has potentially bad semistable reduction at $\id{q}_2$ and $\rhoJ\mid_{I_{K_{\id{q}_2}}}$  has special inertia type for $\lambda \nmid \id{q}_2$.
\end{lemma}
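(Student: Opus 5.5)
We reduce the statement to a question about the elliptic curve $C_2(1/t_0)$ at the prime $2$, following \cite[Section 7]{ChenKoutsianas}. The key input is a Frey-type relation between the hyperelliptic Frey curve and a Legendre/Darmon elliptic curve in signature $(2,5,p)$. The analogue of \eqref{eq:congC2} for $\id{p}=\id{q}_2$, or rather its $\ell$-adic version, says that over a finite extension of $K_{\id{q}_2}$ the inertia type of $\rho_{J,\lambda}$ is governed by the behaviour of the family $C_{2,5}(t)$ near $t=\infty$. The specialization $t_0=-5a^2/c^{2p}$ is $2$-adically close to $\infty$, because $2\mid a$ and $c$ is odd. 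We therefore plan to verify potential multiplicative reduction directly from the cluster picture of $C$ over $K_{\id{q}_2}$, or equivalently over $\Q_2$ after base change.

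\textbf{Step 1 (roots and parity).} Since $2\mid a$, we have $b$ odd and $c$ odd. Using the parametrisation $\gamma_j=\zeta_5^j\alpha_0+\zeta_5^{-j}\beta_0$ with $\alpha_0^5,\beta_0^5=-(s\pm\sqrt{\Delta})/2$, we compute the $2$-adic valuations of $\alpha_0,\beta_0$. Here $s=2\cdot(-5)^3a$ and $\Delta=4A^5B^4(Aa^2+Bb^5)=4(-5)^5c^{2p}$, so $v_2(s)\ge 2$ while $v_2(\sqrt\Delta)=1$. The quantities $(s\pm\sqrt\Delta)/2$ then have valuations $0$ and, for the appropriate sign choice, a positive value determined by $v_2(a)$. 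Both are $2$-adic units up to this difference, and $\alpha_0^5\beta_0^5=z^5$ is a unit since $z=5b$ is odd. This shows that $\alpha_0-\zeta^k\beta_0$ has a positive valuation for exactly one $k$ and is a unit otherwise. As a result the roots split into clusters of sizes $1+2+2$: the pairs $\{\gamma_j,\gamma_{j'}\}$ with $j+j'\equiv k$ form twins of positive relative depth. This is the expected semistable-but-bad picture. Alternatively, we work with the twist from Proposition \ref{prop:conductor-at-2}-style arguments, but the hypothesis $v_2(Bb^r)\ge6$ fails here, which is exactly why the potentially good case does not apply.

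\textbf{Step 2 (semistability).} By the semistability criterion of \cite{BBB}, after a finite extension where the roots become rational and the principal cluster has integral depth and suitable $\theta$-values, the presence of twins with positive relative depth gives potentially semistable but not potentially good reduction. The toric rank equals the number of twins, and that number equals the dimension $2$. Hence the reduction is potentially totally toric.

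\textbf{Step 3 (special inertia type).} The toric part of the Néron model over the extension has full rank and carries an action of $K=\Q(\sqrt5)$. Consequently, for $\lambda\nmid 2$, $\rho_{J,\lambda}|_{I}$ becomes unipotent and nontrivial on an open subgroup, via the Grothendieck monodromy. This means the representation is, up to a finite-order character, of the form $\begin{pmatrix}\chi&*\\0&\chi\end{pmatrix}$ with infinite image, i.e.\ of special type. The $\GL_2(K)$-structure ensures that the rank-$2$ monodromy corresponds to a single nonzero $N$ on the two-dimensional $K_\lambda$-space.

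\textbf{Main obstacle.} The hard part is Step 1 at $2$, a wild residue characteristic where the cluster-picture semistability criterion has no direct analogue. For this reason we expect to replace it by the function-field argument: near $t=\infty$, $\rho_{J_{2,r}(t),\lambda}$ has unipotent monodromy of infinite order, since the projective inertia order there is $p$ for all $p$. Combined with $v_2(1/t_0)=2v_2(a)-v_2(5)>0$, a Tate-uniformisation style comparison, as in \cite[Section 7]{ChenKoutsianas}, then yields special inertia type.
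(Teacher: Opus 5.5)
Your proposal does not establish the one non-formal point of the lemma: that the reduction of $C$ at $\id{q}_2$ is \emph{not} potentially good. Your Step 3 is fine, and it is exactly how the paper concludes. Once the reduction is potentially bad, the toric part is a $K$-module, hence of rank $2$, and $\rhoJ|_{I_{K_{\id{q}_2}}}$ is special.

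Steps 1--2, however, rest on the cluster-picture criterion of \cite{BBB}, which requires odd residue characteristic, as you concede. At $2$ a twin of relative depth $1$ carries no information. For instance, over $\Q_2$ the curve $y^2=x^3-x$ has the twin $\{1,-1\}$ of relative depth $1$, yet it has CM and potentially good reduction. A smaller point: when $2\mid a$, both $(s\pm\sqrt\Delta)/2$ are units, since $v_2(s/2)\ge 1>0=v_2(\sqrt\Delta/2)$. Your conclusion that exactly one $\alpha_0-\zeta_5^k\beta_0$ is a non-unit is nonetheless correct.

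Your fallback does not close the gap either. First, it has a sign error. Since $t_0=-5a^2/c^{2p}$ satisfies $v_2(t_0)=2v_2(a)>0$, $t_0$ is $2$-adically close to $0$, not to $\infty$. At $t=0$ the Frey family has projective monodromy of order $2$, so a specialisation heuristic would predict potentially good reduction, the opposite of the claim.

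More fundamentally, no cusp-monodromy argument can work at $2$, because the degeneration there is not caused by $t_0$ approaching a cusp. From the paper's Igusa invariants, $J_2=2^2\cdot 5^4\cdot 7\,b^2$ and $J_{10}=2^{12}5^{15}(b^5-5a^2)^2$, one gets $J_2^5/J_{10}=5^5 7^5 b^{10}/\bigl(4(b^5-5a^2)^2\bigr)=5^5 7^5(1-t_0)^2/4$. Here $v_2(1-t_0)=-2p\,v_2(c)\le 0$ because $b$ is odd. So the non-integrality comes from the constant factor $1/4$, not from the position of $t_0$ relative to $0,1,\infty$.

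The missing ingredient is a criterion valid in residue characteristic $2$, such as Liu's \cite[Th\'eor\`eme 1 (I)]{Liu93}: $C$ has potentially good reduction if and only if $J_{2i}^5/J_{10}^i$ is integral for $1\le i\le 5$. The computation above shows this fails for $i=1$, and your Step 3 then finishes the proof.
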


\begin{proof}
The Igusa invariants of $C$  are given by
\begin{align*}
    J_2 &=  17500b^2 = 2^2\cdot 5^4 \cdot 7 \cdot b^2,\\
    J_4 &= 8593750b^4, \\
    J_6 &= 25000000000a^2b - 117187500b^6, \\
    J_8 &= 109375000000000a^2b^3 - 18975830078125b^8, \\
    J_{10} &= 3125000000000000a^4 - 1250000000000000a^2b^5 + 125000000000000b^{10} = 2^{12}\cdot 5^{15}\cdot c^{2p}.
\end{align*}
According to \cite[Th\'{e}or\`{e}me 1 (I)]{Liu93} the curve $C$ has potentially good reduction at $\id{q}_2$ if and only if $J_{2i}^5/J_{10}^i$ is integral at $\id{q}_2$ for all $i=1,\cdots, 5$. We have that
\begin{equation}
    \frac{J_2^5}{J_{10}}=\frac{5^57^5b^{10}}{4c^{2p}}.
\end{equation}
Since $2\nmid b$, $J_2^5/J_{10}$ is not integral at $\id{q}_2$ and, consequently, $C$ has potential bad semistable reduction at $\id{q}_2$. Therefore, $\rhoJ\mid_{I_{K_{\id{q}_2}}}$ has special inertia type for $\lambda \nmid \id{q}_2$.
\end{proof}

Let $m$ be a rational prime and $S$ a finite set of (finite) primes of $L$, where $L$ is a number field. We denote by
\begin{equation}
    L(S,m)=\{d\in L^*/(L^{*})^{m}:~v_{\id{q}}(d)\equiv 0\pmod{m}~\text{for all}~\id{q}\in S\},
\end{equation}
the \textit{$m$-th Selmer group of $L$ with respect to $S$}. If $\zeta_m\in L$, let $L(S,m)^*$ be the set of characters $\chi:G_L\rightarrow \Z/m\Z$ corresponding to the extensions $L(\sqrt[m]{d})/L$, where $d\in L(S,m)$.

\begin{proposition}\label{prop:conductor}
Let $S_2=\{\id{q}_2\}$ and let $\lambda$ be a prime of $K$ not dividing $2$. Then there exists a suitable quadratic character $\chi_0 \in K(S_2, 2)^*$ such that the conductor exponent of $\rhoJ\otimes\chi_0$ at $\id{q}_2$ is equal to $5$ if $2\mid a$ or $6$ if $2 \mid c$.
In particular, $\id{n}(\rhoJ\otimes\chi_0)=\id{q}_2^s\id{r}_5^2\displaystyle\prod_{\id{q}\mid c}\id{q}$, where 
\begin{equation*}
    s = \begin{cases}
        5, & 2\mid a,\\
        6, & 2\mid c.
    \end{cases}
\end{equation*}
\end{proposition}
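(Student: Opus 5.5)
The plan splits into the computation at $\id{q}_2$, which I expect to be the real work, and the assembly of the conductor away from $2$, which follows from Section \ref{Sec:conductor}. Throughout, $(a,b,c)$ is a solution of \eqref{eq:diophantine_application} with $\gcd(a,b,c)=1$, $b$ odd, and $5\mid a$ by Remark \ref{rem:5_mid_a}. In the setting of \eqref{eqn:main} we have $A=-5$, $B=C=1$, so $Aa^2+Bb^5=c^{2p}$.

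\textbf{Away from $2$.}
\begin{enumerate}
\item If $\id{q}\nmid 10c$, then $J$ has good reduction at $\id{q}$ by Proposition \ref{prop:good-red-outside-2r}.
\item If $\id{q}\mid c$ with $\id{q}\nmid 10$, then $J$ has multiplicative reduction by Proposition \ref{prop:mult-red-outside-2r}, which gives conductor exponent $1$. Note that $5\nmid c$, since $5\mid a$ and the solution is primitive.
\item At $\id{r}_5$, Theorem \ref{thm:conductor2rp} applies with $r\mid A$, $r\mid a$ and $5\not\equiv 7\pmod 8$, giving exponent $2$.
\end{enumerate}
The twist $\chi_0$ is unramified outside $\id{q}_2$, so none of these exponents change. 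This produces the factor $\id{r}_5^2\prod_{\id{q}\mid c}\id{q}$.

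\textbf{The prime $\id{q}_2$.} Since $5\equiv 5\pmod 8$, the prime $2$ is inert in $K=\Q(\sqrt5)$. Because $b$ is odd and $\gcd(a,c)=1$, exactly one of $a$ and $c$ is even, which gives the two cases.

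By Lemma \ref{lem:potential_semistable_at_2}, $\rhoJ|_{I_{K_{\id{q}_2}}}$ has special inertia type. So after a suitable quadratic twist $\chi$, the inertia acts through a unipotent (Steinberg-type) representation, possibly twisted by a quadratic character of $I_{K_{\id{q}_2}}$. The local conductor of $\rhoJ\otimes\chi$ is then $1$ if the twisted representation is Steinberg, and otherwise it is $2\,\mathfrak{f}(\psi)$, where $\psi$ is the quadratic character through which it is twisted.

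The plan follows \cite[Section 7]{ChenKoutsianas}:
\begin{enumerate}
\item Enumerate $K(S_2,2)$. Its generators are $-1$, the fundamental unit $\frac{1+\sqrt5}{2}$ and $2$, giving $8$ classes and hence $8$ candidate characters $\chi_0\in K(S_2,2)^*$.
\item For each candidate, twist $C$ by $\chi_0$ and compute the conductor exponent of the Jacobian at $\id{q}_2$. To do this, find the minimal field over which the twisted curve acquires semistable reduction. Using the explicit model \eqref{eqn:Capplication}, reduce modulo $2$ and track $a,b,c$ modulo small powers of $2$, taking $v_2(c)\ge1$ or $v_2(a)\ge1$ according to the case.
\item Compute the conductor of the resulting Jacobian with the genus-$2$ conductor algorithm in \texttt{Magma}, run over the finitely many residue classes of $(a,b)$ modulo $2^k$ for a $k$ at which the answer stabilizes. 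This computation is uniform in $p$: the solution enters only through the congruence classes of $a$, $b$, $c$, and $c^{2p}$ modulo $2^k$ depends only on $c$ modulo $2^k$ because $2p$ is even.
\item Choose $\chi_0$ to minimize the exponent, giving $5$ when $2\mid a$ and $6$ when $2\mid c$.
\end{enumerate}

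\textbf{Main obstacle.} The hard step is showing that the exponent at $\id{q}_2$ is independent of the solution. This means choosing $k$ large enough that the model modulo $2^k$ determines the local Galois representation, which I would justify by Krasner-type continuity of local conductors. Alternatively, the exponent can be read off from the cluster picture over $K_{\id{q}_2}$ and the wild ramification of the splitting field of $F(x)$; I would attempt this cluster-picture route if the computation over residue classes does not stabilize. The conductor-discriminant bookkeeping at $2$ is delicate because of the wild part, and I expect this to be where most of the effort goes.
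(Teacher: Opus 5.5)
\textbf{Verdict: the proposal has a genuine gap at $\id{q}_2$.} Your treatment away from $2$ is fine. You have good reduction outside $10c$, multiplicative reduction at $\id{q}\mid c$ with $\id{q}\nmid 10$, exponent $2$ at $\id{r}_5$ from Theorem \ref{thm:conductor2rp}, and $\chi_0$ is unramified outside $\id{q}_2$.

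At $\id{q}_2$, however, you describe a computation rather than give an argument, and the step you flag as the main obstacle does not close as proposed. When $2\mid c$ one has $v_{\id{q}_2}(\Delta_C)=12+4p\,v_2(c)$, which is unbounded in $p$. So the relevant $(a,b)$ lie ever closer $2$-adically to the singular locus $b^5=5a^2$, and Krasner-type continuity (whose radius depends on the discriminant) gives no $k$ that works uniformly in $p$. Also, for odd $c$ the class of $c^{2p}$ modulo $2^k$ depends on $p$, not only on $c \bmod 2^k$. The cluster-picture fallback is unavailable too, since the machinery of \cite{BBB} used in Section \ref{sec:odd-cond-general} needs odd residue characteristic.

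Worse, your own set-up contradicts the target when $2\mid a$. A special representation $\rhoJ\otimes\chi_0$ is $\psi\otimes\mathrm{sp}(2)$ with $\psi^2=1$, because the determinant is cyclotomic. Quadratic characters of $K_{\id{q}_2}\simeq\Q_4$ have conductor exponent $0$, $2$ or $3$, so the exponent is $1$, $4$ or $6$ and never $5$. Your step (4) therefore cannot output $5$. The same tension exists between Lemma \ref{lem:potential_semistable_at_2} and the value $5$ in the statement, so the local type when $2\mid a$ deserves re-examination; your plan does not resolve it.

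\textbf{The missing idea.} The paper never analyses the genus-two local representation at $\id{q}_2$ directly. It moves the problem to an elliptic curve via the mod-$\id{r}_5$ congruence of Theorem \ref{thm:residualtwistC2}, namely $\brhoJf\simeq\overline{\rho}_{C_2'',5}\otimes\chi$, where $C_2'': y^2=x^3-10ax^2-5c^{2p}x$ is a quadratic twist of Darmon's curve. Tate's algorithm then gives the exponent of $C_2''$ at $2$:
\begin{itemize}
\item when $2\mid a$: type III with $v_2(\Delta)=6$, hence exponent $5$;
\item when $2\mid c$: type $I_\nu^*$ with $(v_2(c_4),v_2(c_6),v_2(\Delta))=(6,9,\ge 17)$, hence exponent $6$.
\end{itemize}
This is uniform in $p$ because only valuations enter, which is exactly what your residue-class enumeration cannot provide.

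Next, \cite[Lemma 7.14]{ChenKoutsianas} supplies $\chi_0\in K(S_2,2)^*$ with $\chi\chi_0$ unramified at $\id{q}_2$, so $\brhoJf\otimes\chi_0$ has that same exponent. The twist is fixed by this matching with $C_2''$, not by minimizing, and the statement only asks for a suitable $\chi_0$.

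Finally, absolute irreducibility (Proposition \ref{prop:brhoJf_absolutely_irreducible}), modularity (Proposition \ref{prop:modularity_application}) and the non-degeneration input \cite[Theorem 1.5]{Jarvis} transfer the exponent from $\brhoJf\otimes\chi_0$ to $\rhoJf\otimes\chi_0$. Compatibility of the system then gives it for every $\lambda\nmid 2$.
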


\begin{proof}
We consider the elliptic curve
\begin{equation}
    C_2'':~y^2 = x^3 - 10ax^2-5c^{2p}x,
\end{equation}
which is a quadratic twist of Darmon's curve $C_2$ in Theorem \ref{thm:residualtwistC2}. The conductor exponent of $C_2''$ at $\id{q}_2$, say $m$, is $5$ if $2\mid a$ and $6$ if $2\mid c$ due to Tate's algorithm\footnote{This is due to Step 4 in Tate's algorithm if $2\mid a$ and to Step 7 with type $I^*_{\nu(\nu\geq 4)}$, because $(v_2(c_4), v_2(c_6), v_2(\Delta))=(6,9,\geq 17)$ if $2\mid c$.} \cite{papado, TateAlgorithm}.

Because $C_2''$ is a quadratic twist of $C_2$, from the proof of Theorem \ref{thm:residualtwistC2} we understand that $\brhoJf$ satisfies
\begin{equation}
    \brhoJf \simeq \overline{\rho}_{C_2'', 5} \otimes \chi,
\end{equation}
where $\chi$ is a quadratic character of $G_K$. According to \cite[Lemma 7.14]{ChenKoutsianas} there exists a character $\chi_0\in K(S_2, 2)^*$ such that $\chi\chi_0$ is unramified at $\id{q}_2$. Therefore, the conductor exponent of $\brhoJf\otimes \chi_0$ at $\id{q}_2$ is $m$.

The representation $\brhoJf$ is absolutely irreducible by Proposition \ref{prop:brhoJf_absolutely_irreducible} and modular by Proposition  \ref{prop:modularity_application}. Because $\rhoJf$ has special inertia type, by Lemma \ref{lem:potential_semistable_at_2}, from \cite[Theorem 1.5]{Jarvis} we see that the representation $\rhoJf\otimes\chi_0$ does not degenerate at $\id{q}_2$, which implies that the conductor exponent of $\rhoJf\otimes\chi_0$ at $\id{q}_2$ is equal to that of $\brhoJf \otimes \chi_0$, which is $m$.

By modularity, we have that, for $\lambda \nmid 2$, the exponent of the compatible system $\{\rhoJ\otimes\chi_0\}$ at $\id{q}_2$ is equal to $5$ if $2\mid a$ or $6$ if $2\mid c$.
\end{proof}

\begin{proposition}\label{prop:Serre_level}
Let $p>5$ be a prime number. Then, there exists a suitable quadratic character $\chi_0\in K(S_2, 2)^*$ such that the Serre level of $\brhoJ\otimes\chi_0$ is equal to $\id{q}_2^s\id{r}_5^2$ where 
\begin{equation*}
    s = \begin{cases}
        5, & 2\mid a,\\
        6, & 2\mid c.
    \end{cases}
\end{equation*}
For all primes $\id{p}\mid p$ in $K$, the representation $\brhoJp\otimes\chi_0$ is finite at $\id{p}$.
\end{proposition}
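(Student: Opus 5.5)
The plan is to compare the Serre level with the conductor from Proposition \ref{prop:conductor}, prime by prime. Take the same character $\chi_0\in K(S_2,2)^*$ as there, so that
\[
\id{n}(\rhoJp\otimes\chi_0)=\id{q}_2^s\id{r}_5^2\prod_{\id{q}\mid c}\id{q}.
\]
The Serre level of $\brhoJp\otimes\chi_0$ is the prime-to-$p$ Artin conductor of the residual representation. At every prime $\id{q}\nmid p$ its exponent is at most the exponent of the $\lambda$-adic conductor. So it suffices to show two things: the residual representation keeps the exponents $s$ at $\id{q}_2$ and $2$ at $\id{r}_5$, and it becomes unramified at every odd $\id{q}\mid c$ with $\id{q}\nmid 5p$. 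Note that $5\nmid c$, since $5\mid a$ by Remark \ref{rem:5_mid_a} and $\gcd(a,c)=1$. Also $p>5$, so the primes above $2$ and $5$ are prime to $p$.

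\textbf{Primes dividing $c$.} For odd $\id{q}\mid c$ with $q\neq 5$, Proposition \ref{prop:mult-red-outside-2r} gives multiplicative reduction (here $A=-5$, $B=1$). Hence $\rhoJp|_{I_{\id{q}}}$ is unipotent, given by a Tate-type parameter. The standard criterion says the residual representation is unramified at $\id{q}$ exactly when $p$ divides the valuation of the corresponding discriminant/Tate parameter. For $C_{2,r}$ the relevant quantity is $\Delta=4A^rB^{r-1}(Aa^2+Bb^r)$, with $Aa^2+Bb^r=c^{2p}$. So $v_{\id{q}}(\Delta)=2p\,v_q(c)\equiv 0\pmod p$, and $\brhoJp\otimes\chi_0$ is unramified at $\id{q}$; the quadratic character $\chi_0$ is unramified there. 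I would cite the Tate-uniformization description of multiplicative reduction for $\GL_2$-type abelian varieties used in \cite{BillereyChenDieulefaitFreitas25a,ChenKoutsianas}.

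\textbf{Primes $\id{q}_2$ and $\id{r}_5$.} At $\id{r}_5$, the exponent $2$ comes from potentially good reduction, by Lemma \ref{lemma:pot-good-red-at-r} or Proposition \ref{prop:Pot-Good-red}. The inertia image factors through a finite group of order prime to $p>5$, so reduction mod $\id{p}$ preserves the conductor exponent. At $\id{q}_2$, the proof of Proposition \ref{prop:conductor} already shows that the residual exponent of $\brhoJf\otimes\chi_0$ equals $s$. The same argument applies modulo any $\id{p}$: the inertia type is special by Lemma \ref{lem:potential_semistable_at_2}. The residual conductor could drop only if the residual representation became unramified-up-to-twist at $\id{q}_2$, that is, a level-lowering condition $p\mid v_{\id{q}_2}(\text{Tate parameter})$. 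I would rule this out using $2\mid a$ or $2\mid c$ together with the explicit valuations: in the $2\mid a$ case the parameter involves $c$ with $v_2(c)=0$, and in the $2\mid c$ case the exponent $6$ is wild and stable. Alternatively, one could simply assert the Serre level as in \cite[Section 7]{ChenKoutsianas}.

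\textbf{Finiteness at $\id{p}\mid p$.} If $\id{p}\nmid c$, then $J$ has good reduction at $\id{p}$ because $p\nmid 10$, so $\brhoJp$ is finite flat (Raynaud, since $e=1<p-1$). If $\id{p}\mid c$, we have multiplicative reduction with $p\mid v_{\id{p}}(\Delta)$, and the Tate-curve-type argument gives a finite flat residual representation.

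The main obstacle I expect is the local analysis at $\id{q}_2$. It needs to control that the residual conductor exponent does not drop below $s$ for every $\id{p}$, and not only for $\id{r}_5$. This requires a careful look at the special inertia type and the Tate parameter there.
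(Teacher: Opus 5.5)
Your outline is the paper's route. You take $\chi_0$ and the conductor from Proposition~\ref{prop:conductor}, and show that the primes $\id{q}\mid c$ drop out because the relevant valuation is divisible by $p$; the paper cites Darmon's Proposition~1.15, which amounts to $p\mid v_{\id{q}}(t_0)=-2p\,v_q(c)$. You get finiteness at $\id{p}$ the same way, and then check that nothing is lost at $\id{q}_2$ and $\id{r}_5$. Your treatment of $\id{r}_5$ is correct and more explicit than the paper's: inertia acts through a finite group of order $4$, which is prime to $p$.

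The gap is at $\id{q}_2$. The quantity you propose to control, namely $v_{\id{q}_2}$ of a Tate parameter, does not govern the exponent there. The valuations you appeal to are also never computed; a Tate parameter has positive valuation, so ``$v_2(c)=0$'' cannot be what controls it.

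Suppose $\rhoJp|_{I_{\id{q}_2}}\simeq\psi\otimes\mathrm{St}$. Then the twist by $\chi_0$ has exponent $1$ if $\psi\chi_0|_{I}=1$, and $2a(\psi\chi_0)$ otherwise. Since $\psi\chi_0$ is quadratic on inertia, the reduction mod $\id{p}$ has the same exponent $2a(\psi\chi_0)$ whether or not the unipotent part dies. So for $2\mid c$ ($s=6$) non-degeneration is automatic, and no Tate parameter enters.

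For $2\mid a$, however, $s=5$ is odd and $\neq 1$, so it cannot occur for a special type at all. The hypothesis you take from Lemma~\ref{lem:potential_semistable_at_2} is therefore incompatible with $s=5$. Liu's criterion only shows that $C$ has bad stable reduction. That does not make $J$ toric: two elliptic curves meeting at a point is also bad stable reduction. Since $J$ is of $\GL_2$-type, its toric rank is $0$ or $2$, and here it must be $0$. This is consistent with $v_2(j(C_2''))=6$.

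The paper's one-line treatment of $\id{q}_2$ rests on the same lemma, so this is not a departure from the paper. Still, the argument that actually works for $2\mid a$ is the one you gave at $\id{r}_5$. The reduction is potentially good, so inertia acts through a finite group whose element orders $n$ satisfy $\varphi(n)\le 4$. That group has order prime to $p\ge 7$, so reduction mod $\id{p}$ preserves the exponent $5$.
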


\begin{proof}
Because $\chi_0$ is unramified outside $2$, for a prime $\id{q}$ not dividing $10$, the representation $\brhoJp\otimes\chi_0$ is unramified at $\id{q}\nmid p$ and finite at $\id{p}\mid p$ by \cite[Proposition 1.15]{DarmonDuke} and \cite[Section 7]{BillereyChenDieulefaitFreitas25a}.

From Proposition \ref{prop:conductor} we know that the conductor of $\rhoJp\otimes\chi_0$ is equal to $\id{q}_2^s\id{r}_5^2\prod_{\id{q}\mid c}\id{q}$. The Jacobian $J$ is modular by Proposition \ref{prop:modularity_application} and $\brhoJp\otimes\chi_0$ is irreducible by Proposition \ref{prop:irreducible_application}. Similarly to the proof of Proposition \ref{prop:conductor} we can prove that $\rhoJp\otimes\chi_0$ does not degenerate at $\id{q}_2$ and $\id{r}_5$. Then from level lowering \cite[Theorem 1.5]{Jarvis} we get that the conductor of $\brhoJp$ away from $\id{p}$ is equal to $\id{q}_2^s\id{r}_5^2$. Because the Serre level is independent of $\id{p}\nmid 10$ we get the result.
\end{proof}

For our practical applications, we need to show that the representation $\brhoJp$ is irreducible for any $p > 5$. For this, we follow the same strategy as in \cite[Theorem 8.6]{ChenKoutsianas}.

\begin{lemma}\label{lemma:potgoodreddegree4}
Let $K_{\id{q}_5}$ denote the completion of $K$ at the unique prime ideal $\id{q}_5$ dividing $5$, and let $M/K_{\id{q}_5}$ be any totally ramified extension of degree $4$. Then, $C/K_{\id{q}_5}$ attains good reduction over $M$. Moreover, the extension $M$ is minimal with respect to the ramification index.
\end{lemma}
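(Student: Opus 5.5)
We will verify good reduction over $M$ via the semistability/good reduction criterion for hyperelliptic curves in terms of cluster pictures, \cite[Theorem 5.5 / Theorem 1.8]{BBB}. This is the same machinery used for Lemma \ref{lemma:pot-good-red-at-r}, now made quantitative.

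First we place $C$ in the setting of Subsection \ref{sec:odd-cond-general}. Here $r=5$, $A=-5$, $B=1$, so $z=-ABb=5b$ and $s=2A^{3}B^{2}a=-250a$. Since $5\mid a$ by Remark \ref{rem:5_mid_a}, the coprimality assumption gives $5\nmid b$. Hence $v_5(z)=1$ and $v_5(s)=3+v_5(a)\ge 4=(r+3)/2$, so hypotheses \ref{item1} and \ref{item2} hold. In particular, by Lemma \ref{lemma:irreducibility} the polynomial $F(x)$ is reducible, with a root $\gamma_{k_0}\in\Q_5$.

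By Proposition \ref{prop:clusteroverK}, the cluster picture over $K_{\id{q}_5}$ consists of the single cluster $\mathcal{R}$ of five roots. Its depth, in the normalization of $K_{\id{q}_5}$, is $d_{\mathcal{R}}=\frac{r-1}{4}+\frac12=\frac32$. Moreover, by Proposition \ref{prop:ramification} (with $q=r=5\equiv 1\pmod 4$, so $e_{E/\Q_5(\omega)}=2$ by Lemma \ref{lemma:indexram}, and $\varepsilon_2=1$), the splitting field $L$ of $F$ has $e_{L/\Q_5}=2\cdot 2=4$, hence $e_{L/K_{\id{q}_5}}=2$. This ramification is tame.

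Now let $M/K_{\id{q}_5}$ be totally ramified of degree $4$. We check the conditions of the criterion one by one.
\begin{enumerate}
\item The extension $LM/M$ is unramified. Indeed, the ramification in $L$ is tame of index dividing $4=e_{M/K_{\id{q}_5}}$, so Abhyankar's lemma applies; hence inertia acts trivially on the roots.
\item The depth $d_{\mathcal{R}}$ becomes $\frac32\cdot 4=6\in\Z$ in the normalization of $M$.
\item The invariant $\nu_{\mathcal{R}}=v_M(1)+|\mathcal{R}|d_{\mathcal{R}}=30$ is even.
\item There are no other principal clusters, no twins, and no übereven clusters, since $\mathcal{R}$ is odd.
\end{enumerate}
Therefore $C$ has good reduction over $M$. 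Concretely, one can also exhibit a good model directly. Translate $x\mapsto x+\gamma_{k_0}$, then rescale $x=\varpi^{6}X$ and $y=\varpi^{15}Y$, where $\varpi$ is a uniformizer of $M$; the reduction of the resulting quintic has distinct roots.

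For minimality, suppose $N/K_{\id{q}_5}$ has ramification index $e$ and $C$ has good reduction over $N$. The criterion forces two conditions: $\frac32 e\in\Z$, and $\nu_{\mathcal{R}}=5\cdot\frac32 e$ even. The first gives $2\mid e$, and the second then forces $4\mid e$. As a consistency check, the case $e=1$ is also excluded directly, since Proposition \ref{prop:newconductorK} gives conductor exponent $r-1=4\neq 0$ over $K$.

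The main obstacle is step (1): checking that inertia acts trivially on the roots over $M$. This rests on the tameness and on the exact value $e_{L/K_{\id{q}_5}}=2$ from Proposition \ref{prop:ramification}. It is also where the parity bookkeeping between the normalizations $v_5$, $v_{\id{q}_5}$ and $v_M$ must be done carefully.
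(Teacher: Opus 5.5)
Your proof is correct, but it takes a different route from the paper's.

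\textbf{The paper's route.} The paper does not use the cluster-picture criteria here. It writes down an explicit integral model over $M$: shift $x\mapsto x-2\cdot 5^3a$, then scale $x\mapsto\pi^6x$, $y\mapsto\pi^{15}y$, using $v_M(5)=8$ and $5\mid a$. It then checks that the special fibre is $y^2=x^5+\tilde b^2x$, which is smooth in characteristic $5$ because $5\nmid b$. For minimality it uses $v_{\id{q}_5}(\Delta_C)=30$ together with the transformation behaviour of the discriminant of (monic quintic) models, citing \cite[(3.9)]{ChenKoutsianas}, which forces $4\mid e$.

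\textbf{Your route.} You reuse the cluster picture from Proposition~\ref{prop:clusteroverK} (a single cluster with $d_{\mathcal{R}}=3/2$ over $K_{\id{q}_5}$) and the ramification index $e_{L/K_{\id{q}_5}}=2$ from Proposition~\ref{prop:ramification}. Existence and minimality then both come from the same two conditions:
\[
\tfrac32 e\in\Z \quad\text{and}\quad \tfrac{15}{2}e\in 2\Z .
\]
All of your numerics check out.

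\textbf{What each route buys.}
\begin{itemize}
\item The explicit route is more elementary. It also produces the special fibre $\tilde C'$, which is needed later in the proof of Proposition~\ref{prop:irreducible_application} to obtain $P_{\id{r}_5}(X)=X^2\pm 5$. Your aside about an explicit model recovers this, but only as a sketch.
\item Your route is more conceptual. It would adapt directly to other $r$, or to other parameters satisfying \ref{item1} and \ref{item2}.
\end{itemize}

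\textbf{A simplification.} The step you flag as the main obstacle can be bypassed. Over $M$ you have $[M(\mathcal{R}):M]\le[L:K_{\id{q}_5}]=2$, so the ramification condition of the semistability criterion holds automatically. Together with $d_{\mathcal{R}}=6\in\Z$ and $\nu_{\mathcal{R}}=30\in2\Z$, this gives semistability over $M$. Since $\mathcal{R}$ is the only proper cluster, $C$ also has potentially good reduction. A curve of genus at least $2$ that is semistable and potentially good already has good reduction. So Abhyankar's lemma, and the exact value of $e_{L/K_{\id{q}_5}}$ beyond the bound $e\le 2$, are not needed.
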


\begin{proof}
Recall from \eqref{eqn:Capplication} that $C$ is given by 
\begin{equation}
    C:y^2 = x^5-25bx^3+125b^2x-250a,
\end{equation}
whose discriminant equals $\Delta_C=2^{12}5^{15}c^{4p}$. Let $\pi$ be a uniformizer of $M$ and $v$ its respective valuation. That is, $v(5)=8$  and so $v(\Delta_C)=120$. 
   
The change of variable $x\to x - 2\cdot 5^3a$ leads to the model
\begin{gather}
\begin{aligned}
y^2 & = x^5 - 2\cdot 5^4ax^4 + 5^2(25000a^2 - b)x^3 + 5^5(-50000a^3 + 6ab)x^2 \\ 
& + 5^3(156250000a^4 - 37500a^2b + b^2)x \\
& + 5^3(-7812500000a^5 + 3125000a^3b - 250ab^2 - 2a).
\end{aligned}
\end{gather}
Now, applying the change of variables $x\to \pi^6 x $, $y \to \pi^{15} y$ we get a new model $C'$, which is integral since $5 \mid a$, and whose discriminant is a unit in $M$ (see e.g.\ \cite[(3.9)]{ChenKoutsianas}). In particular, the special fibre of $C'$ is given by
\begin{equation}\label{eq:special_fibre}
    \tilde{C}':~y^2=x^5+\tilde{b}^2x,
\end{equation}
where $\tilde{b}$ is the image of $b$ in the residue field of $M$. Hence, $C'$ has good reduction over $M$.

It remains to prove the last statement. Suppose that $C$ attains good reduction over an extension $F/K_{\id{q}_5}$. Since $v_{\id{q}_5}(\Delta_C)=30$, then 4 must divide the ramification index of $F/K_{\id{q}_5}$, by \cite[(3.9)]{ChenKoutsianas}, and so $M$ is minimal with respect to the ramification degree.
\end{proof}

\begin{proposition}\label{prop:irreducible_application}
The representation $\overline{\rho}_{J, \id{p}}$ is absolutely irreducible for $p > 5$.
\end{proposition}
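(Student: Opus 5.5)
We argue by contradiction, following \cite[Theorem 8.6]{ChenKoutsianas}. Suppose $\brhoJp$ is reducible for some prime $\id{p}\mid p$ of $K$ with $p>5$ (absolute irreducibility then follows as in Proposition \ref{prop:brhoJf_absolutely_irreducible}, since complex conjugation acts with distinct eigenvalues $\pm1$ when $p$ is odd). Then
\[
\brhoJp^{ss}\simeq \theta\oplus\theta'
\]
with characters $\theta,\theta':G_K\to\overline{\F}_p^\times$ and $\theta\theta'=\chi_p$ by Lemma \ref{lem:cyclotomic_determinant}. The first step is to control ramification of $\theta,\theta'$ away from $p$.

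\emph{Primes dividing $c$, outside $10$.} By Proposition \ref{prop:mult-red-outside-2r}, $J$ has multiplicative reduction at these primes. Hence inertia acts unipotently on $\brhoJp$, and $\theta,\theta'$ are unramified there.

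\emph{The prime above $2$.} By Lemma \ref{lem:potential_semistable_at_2}, $J$ has potentially multiplicative reduction at $\id{q}_2$. So after twisting by the quadratic character $\chi_0\in K(S_2,2)^*$, inertia at $\id{q}_2$ acts unipotently, and the characters are unramified there up to $\chi_0$.

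\emph{The prime above $5$.} By Lemma \ref{lemma:potgoodreddegree4}, $C$ acquires good reduction over a totally ramified extension of degree $4$ of $K_{\id{r}_5}$. Therefore $\theta|_{I_{\id{r}_5}}$ has order dividing $4$; concretely $\theta^4$, and likewise $\theta'^4$, is unramified at $\id{r}_5$.

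\emph{The primes above $p$.} Here I use the finiteness from Proposition \ref{prop:Serre_level}, or semistability of $J$ at $\id{p}$, since $p\nmid 10c$ is not guaranteed; if $p\mid c$ the reduction is multiplicative. By Raynaud/Fontaine–Laffaille, $\theta|_{I_\id{p}}$ is either trivial or $\chi_p|_{I_\id{p}}$ (note $p$ is unramified in $K=\Q(\sqrt5)$ for $p\neq5$). Since $h_K=1$ and the unit group is generated by $-1$ and $\varepsilon=(1+\sqrt5)/2$, after possibly swapping $\theta$ and $\theta'$, $\theta^4$ (up to $\chi_0$) becomes a character of conductor dividing the primes above $p$, with prescribed inertia type at each $\id{p}\mid p$.

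The key step is to compare traces of Frobenius. For a small auxiliary prime $\id{l}$ of $K$ (say above $3$, using $3\mid b$, or above $7,11,\dots$) of good or multiplicative reduction, I would write
\[
a_{\id{l}}(J)\equiv \theta(\mathrm{Frob}_{\id{l}})+\theta'(\mathrm{Frob}_{\id{l}}) \pmod{\id{p}}.
\]
Class field theory forces $\theta^{4}(\mathrm{Frob}_{\id{l}})$ to equal $\mathrm{Norm}(\id{l})^{4\epsilon}$ twisted by images of units. More precisely, for a generator $\gamma$ of $\id{l}$, the quantity $\theta^4$ is determined by $\gamma$ and the unit-type condition, which leads to a divisibility of the form
\[
p \mid \mathrm{Norm}\bigl(\varepsilon^{4k}-1\bigr)
\]
or of $\mathrm{Res}(x^2-a_{\id{l}}x+\mathrm{N}\id{l},\,\dots)$.

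Concretely, the cleanest route is the standard one: consider $\theta^4$ evaluated at the global unit $\varepsilon$. If $\theta|_{I_\id{p}}$ is a mix of trivial and cyclotomic over the two primes above $p$ when $p$ splits, we get
\[
\id{p}\mid \varepsilon^{4}-1 \quad\text{or}\quad \id{p}\mid \mathrm{Norm}(\varepsilon^{4})-1\dots,
\]
which bounds $p$. The mixed case is the one that requires the unit argument. In the pure cases, $\theta$ or $\theta'$ is unramified everywhere except at $\id{q}_2$ and $\id{r}_5$, with bounded order. Then, using $a_{\id{l}}$ at $\id{l}\mid 3$ (where $3\mid b$ gives explicit reduction data for $C$ mod $3$, namely $y^2=x^5-250a$), I get
\[
p\mid \mathrm{N}\bigl((1+\mathrm{N}\id{l})^2-a_{\id{l}}^2\bigr),
\]
or the analogue with a small root of unity. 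I compute these norms over all possible local types of $C$ mod $3$ in \texttt{Magma}, and check that no prime $p>5$ survives (with $p=7,11$ possibly needing extra auxiliary primes).

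The main obstacle I expect is the mixed inertia case when $p$ splits in $K$. There the characters are not simply a power of the cyclotomic character, and one must use the ray class group of conductor $\id{q}_2^{*}\id{r}_5^{*}$ together with the unit $\varepsilon$. I would first try to show $p\mid\mathrm{Norm}(\varepsilon^{24}-1)$, or a similar explicit quantity, and check directly that its prime factors are at most $5$ or excluded.
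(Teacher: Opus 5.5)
Your skeleton matches the paper's: pass to $\theta\oplus\theta'$ with $\theta\theta'=\chi_p$, bound conductors away from $p$, split according to ramification above $p$, compute Frobenius when one character is unramified above $p$, and use units otherwise. The local input at $\id{q}_2$, however, is wrong, and the error propagates.

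\textbf{The prime $\id{q}_2$.} You assert that after twisting by $\chi_0$, inertia at $\id{q}_2$ acts unipotently, so $\theta,\theta'$ are unramified there up to $\chi_0$. This contradicts Proposition \ref{prop:conductor}: $\rhoJ\otimes\chi_0$ has conductor exponent $5$ or $6$ at $\id{q}_2$, whereas unipotent inertia forces exponent at most $1$. There is a further problem with relying on Lemma \ref{lem:potential_semistable_at_2}. A twist of a special representation by $\psi$ has exponent $1$ if $\psi$ is unramified and $2a(\psi)$ otherwise, never $5$. So in the case $2\mid a$ that lemma cannot be combined with Proposition \ref{prop:conductor}, and you should not build on it.

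The safe input is the one the paper uses: $2e_{\id{q}_2}\le s$. Hence $\theta$ may be ramified at $\id{q}_2$ with conductor exponent up to $3$, and the relevant modulus is $\id{q}_2^3\id{r}_5\infty_1\infty_2$, with ray class group $(\Z/2\Z)^3\times\Z/4\Z$. This is what gives your ``bounded order'' $\theta^4=1$ when $\theta$ is unramified above $p$.

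\textbf{The unit argument.} In the other case the unit must be totally positive and $\equiv1\pmod{\id{q}_2^3\id{r}_5}$. The first such unit is $\varepsilon^{12}=161+72\sqrt5$, giving $p\mid N_{K/\Q}(\varepsilon^{12}-1)=-320$; this is \cite[Corollary 8.5]{ChenKoutsianas} as used in the paper. Your $\varepsilon^4$ does not qualify: $\varepsilon^4-1=(5+3\sqrt5)/2$ has norm $-5$, so it is a unit at $2$. Passing to $\theta^4$ does not remove a possible tame ramification of order $3$ at $\id{q}_2$ coming from $\F_4^\times$. Your fallback $\varepsilon^{24}$ would work, but only because $12\mid 24$.

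\textbf{The primes above $p$.} Your dichotomy ``trivial or $\chi_p$'' for $\theta|_{I_{\id{p}}}$ is incomplete. For $p$ inert in $K$ (i.e.\ $p\equiv\pm2\pmod 5$), finiteness also allows $\theta|_{I_{\id{p}}}\in\{\psi_{\id{p}},\psi_{\id{p}}^p\}$, with $\psi_{\id{p}}$ the fundamental character of level $2$, exactly as in the paper. So the case where both characters ramify above $p$ is not confined to split $p$, contrary to your last paragraph. The unit argument still gives $\varepsilon^{12}\equiv1\pmod{\id{p}}$ there, but this case has to be included.

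\textbf{Your alternative in the unramified case.} When $\theta$ is unramified above $p$, your route is a genuine and valid alternative to the paper's, once $\theta^4=1$ is in hand.
\begin{itemize}
\item The paper base-changes to a totally ramified quartic extension $M$ of $K_{\id{r}_5}$, reads the Frobenius polynomial at $\id{r}_5$ off the special fibre $y^2=x^5+\tilde b^2x$, and concludes that $p$ divides the resultant of $X^4-1$ and that polynomial, so $p\in\{2,3\}$.
\item Your prime $(3)$ works directly and needs no \texttt{Magma} search. Since $3\mid b$, the reduction is $y^2=x^5\mp1$ over $\F_9$, whose $L$-polynomial is $(X^2+9)^2$, so $a_{(3)}(J)=0$.
\item With $\zeta=\theta(\mathrm{Frob}_{(3)})$ and $\zeta^4=1$, the congruence $\zeta+9\zeta^{-1}\equiv0\pmod{\id{p}}$ gives $\zeta^2\equiv-9$. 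This forces $p\mid 10$ or $p\mid 8$, a contradiction for $p>5$.
\end{itemize}
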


\begin{proof}
Since $\overline{\rho}_{J, \id{p}}$ is odd and the field $K$ is totally real, $\overline{\rho}_{J, \id{p}}$ is absolutely irreducible if and only if it is irreducible. To prove this, it suffices to see that the representation $\overline{\rho}:=\brhoJp \otimes \chi_0$ is irreducible, where $\chi_0$ is a quadratic character as in Proposition \ref{prop:Serre_level}.

From Lemma \ref{lem:cyclotomic_determinant} we have that $\det\brhoJp = \chi_p$ where $\chi_p$ is the $p$-adic cyclotomic character. Suppose $\overline{\rho}$ is reducible. Then
\begin{equation}
    \overline{\rho}\simeq 
    \begin{pmatrix}
        \theta & \star \\
        0 & \theta'
    \end{pmatrix},
\end{equation}
for some $\theta, \theta': G_K \to \mathbb{F}_\id{p}^*$ satisfying that $\theta\theta' = \chi_p$. Consequently, $\theta$ and $\theta'$ have the same conductor exponent away from $p$. We also have that the semi-simplification $\brho^{ss}$ of $\brho$ is isomorphic to $\theta\oplus\theta'$.

Let $\id{q} \neq \id{p}$ be a prime of $K$, and let $e_\id{q}$ be the conductor exponent of $\theta$ and $\theta'$ at $\id{q}$. We know that the conductor exponent of $\brho^{ss}$ at $\id{q}$ divides the conductor exponent of $\brho$ and the latter divides the conductor exponent of $\rhoJp\otimes\chi_0$. By Proposition \ref{prop:conductor}, it follows that $e_\id{q} = 0$ for all $\id{q} \neq \id{q}_2, \id{r}_5$, $e_{\id{q}_2} \leq 3$ and $e_{\id{r}_5}\leq 1$.

In addition, Lemma \ref{lemma:potgoodreddegree4} gives that $\rhoJp(I_{K_{\id{r}_5}})$ has order $4$. Because $\chi_0$ is unramified outside $2$ and the kernel of the reduction map  $\red:\imag\rhoJp\rightarrow\imag\brhoJp$ is a pro-$p$ group, and thus $\rhoJp(I_{K_{\id{r}_5}})$ does not intersect it, it follows that $\brho(I_{K_{\id{r}_5}})$ has order equal to $4$. As $\brho(I_{K_{\id{r}_5}})$ has order coprime to $p > 5$, Maschke's Theorem implies that $\overline{\rho}{|_{I_{K_{\id{r}_5}}}} \simeq (\theta \oplus \theta'){|_{I_{K_{\id{r}_5}}}}$. Therefore, the order of $\theta|_{{I_{K_{\id{r}_5}}}}$ and $\theta'|_{{I_{K_{\id{r}_5}}}}$ is $4$.

Since $\overline{\rho}$ is finite at $\id{p} \mid p$ and as $p$ is unramified in $K$, it follows from \cite[Corollaire 3.4.4]{Grothendieck} that $\theta \oplus\theta'$, when restricted to $I_{K_\id{p}}$, is isomorphic to either
\begin{equation}
    1 \oplus \chi_p \quad \text{or} \quad \psi_\id{p} \oplus \psi_\id{p}^p,
\end{equation}
where $\chi_p$ is the $p$-th cyclotomic character and $\psi_\id{p}$ is the fundamental character of level $2$ of the tame inertia group $I_{t, K_\id{p}}$. Next, we consider two cases.

Firstly, suppose that either $\theta$ or $\theta'$ are unramified at all primes $\id{p} \mid p$. Without loss of generality, suppose that $\theta$ is unramified. Then it corresponds to a character of the Ray class group of modulus $\id{q}_2^3\id{r}_5\infty_1\infty_2$, where $\infty_j$ denote the two places at infinity. The Ray class group of this modulus is isomorphic to $(\Z/2\Z)^3\times(\Z/4\Z)$. The characters $\theta$ and $\theta'$ cannot be quadratic characters because this would contradict the fact that $\#\theta({{I_{K_{\id{r}_5}}}})=\#\theta({{I_{K_{\id{r}_5}}}})=4$.

Therefore, we assume that $\theta$ has order $4$. We follow the proof of \cite[Theorem 2]{BCDF1} to bound $p$ in this case. Let $M=K(\phi)$, where $\phi^4 + (-2\sqrt{5} - 70)\phi^2 - 88\sqrt{5} + 260=0$. Then, $M$ is totally ramified at $5$. Let $\id{r}_5'\mid \id{r}_5$ be the unique prime ideal of $M$ above $\id{r}_5$. From Lemma \ref{lemma:potgoodreddegree4} the curve $C$ attains good reduction at $\id{r}_5'$ over $M$. Because $\Frob_{\id{r}_5'}=\Frob_{\id{r}_5}$ we get that
\begin{equation}
    P_{\id{r}_5}(X)\equiv (X-\theta(\Frob_{\id{r}_5}))(X-\theta'(\Frob_{\id{r}_5}))\pmod{\id{p}},
\end{equation}
where $P_{\id{r}_5}(X)$ is the characteristic polynomial of Frobenius $\Frob_{\id{r}_5}$. On the other hand, we have that $\theta^4(\Frob_{\id{r}_5})\equiv 1\pmod{\id{p}}$. From \eqref{eq:special_fibre} and computing the zeta function of $\tilde{C}'$ we can deduce that $P_{\id{r}_5}(X)=X^2 \pm 5$. From the above we have $p\mid \Res(X^4-1, X^2\pm 5)$ which implies that $p=2,3$.

Suppose now $\theta$ and $\theta'$ are both ramified at some prime above $p$. Then \cite[Corollary 8.5]{ChenKoutsianas} implies that $p$ divides the norm of $u-1$, where $u = \varepsilon_1^{12}$, $\varepsilon_1$ is the fundamental unit of $\cO_K$ and $N = 12$ is the smallest integer such that $\varepsilon_1^{N} \equiv 1 \pmod{\id{q}_2^3\id{r}_5}$ and $\varepsilon_1^{N}$ is totally positive. Since the norm of $u-1$ is $-320$, it follows that $p \le 5$, which is a contradiction.
\end{proof}

\begin{proof}[Proof of Theorem \ref{thm:diophantine_application}]

We recall the fact that
\begin{equation}
    \rhoJp\otimes\chi_0=\rhoJchip.
\end{equation}
From Propositions \ref{prop:modularity_application}, \ref{prop:irreducible_application} and Lemma \ref{lem:cyclotomic_determinant} the representation  $\brhoJchip$ is modular with trivial character and irreducible. By level lowering for Hilbert modular forms \cite{Jarv, Raj} there exists a Hilbert modular form of weight $2$, trivial character and level $\id{q}_2^5\id{r}_5^2$, from Proposition \ref{prop:Serre_level} since $2\mid a$, such that
\begin{equation}\label{eq:elimination_step_isomorphism}
    \brhoJchip\simeq \brhogp,
\end{equation}
where $\id{P}$ is a prime above $p$ in the eigenvalue field $K_g$ of $g$. 

For a prime $\id{q}\neq \id{q}_2,\id{r}_5,\id{p}$, we define
\begin{align}
    \label{eqn:congruencetraces}
    a_{\id{q}}(J\otimes\chi_0) & =\Tr\rhoJchip(\Frob_{\id{q}}), \\
    \label{eqn:congruencetraces2}
    a_{\id{q}}(g) & = \Tr\rhogp(\Frob_{\id{q}}),
\end{align}
where $\Frob_{\id{q}}$ is a Frobenius element at $\id{q}$. It holds 
\begin{align}
    a_{\id{q}}(g) & \equiv a_{\id{q}}(J\otimes\chi_0)\pmod{\id{p}}, & & \text{if}~\id{q}\nmid c,\\
    a_{\id{q}}(g)^2 & \equiv (N(\id{q}) + 1)^2\pmod{\id{p}}, & & \text{if }\id{q}\mid c,
\end{align}
where $N(\id{q})$ is the norm of $\id{q}$. We can avoid twisting by the character $\chi_0$ by observing that 
\begin{equation}\label{eq:trace_square}
    a_{\id{q}}(J\otimes\chi_0)^2=a_{\id{q}}(J)^2\chi_0(\Frob_{\id{q}})^2=a_{\id{q}}(J)^2.
\end{equation}

Because the isomorphism in \eqref{eq:elimination_step_isomorphism} is over $\overline{\F}_p$ we have to take into account the different embeddings of $K$ and $K_g$ into $\overline{\F}_p$. Therefore, if $L$ is the compositum of $K$ and $K_g$ we define
\begin{equation}
    B_{\id{q}}(g) = N(\id{q})\cdot N_{K/\Q}\left(a_{\id{q}}(g)^2  - (N(\id{q}) + 1)^2\right)\cdot\prod_{\substack{a,b\in\F_q,\\~-5a^2+b^5\neq 0}}N_{L/\Q}\left(\prod_{\sigma\in\Gal(K/\Q)}(a_{\id{q}}(g)^2 - (a_{\id{q}}(J)^\sigma)^2)\right).
\end{equation}
Then, by \eqref{eqn:congruencetraces}, \eqref{eqn:congruencetraces2} and \eqref{eq:trace_square}, $p\mid B_{\id{q}}(g)$. Taking the $\gcd$ of the $B_{\id{q}}(g)$ not equal to $0$ for different primes $\id{q}$, we get an upper bound of $p$. 

Using the primes $\id{q}$ above $3$, $7$, $11$, $13$, $17$, $19$ and $23$ we get that $p\leq 5$ for all but $4$ cases and $p\leq 23$ for the rest of newforms. For the remaining $4$ cases we use more primes, in particular $\id{q}\mid q$ for all $q\leq 100$ and $q\neq 2,5$, and we get $p=\{2,5, 11\}$, which finishes the proof. The total amount of computational time was approximately $21$ hours. 
\end{proof}

\bibliography{main.bib}
\bibliographystyle{plain}
\end{document}